\documentclass[reqno, 11pt,twosides]{amsart}
\usepackage{amssymb,amsmath,amsthm}
\usepackage{a4wide}
\usepackage{amscd}
\usepackage{amsfonts}
\usepackage{amssymb}
\usepackage{latexsym}
\usepackage{color}
\usepackage{esint}
\usepackage{graphicx}
\usepackage{float}
\graphicspath{{Figures/}}

\usepackage{graphicx}
\usepackage[breaklinks=true,bookmarks=false]{hyperref}
\usepackage{cleveref}
\usepackage{mathrsfs}

\allowdisplaybreaks

\usepackage[makeroom]{cancel}

\newtheorem{theorem}{Theorem}

\newtheorem{definition}{Definition}
\newtheorem{lemma}{Lemma}
\newtheorem{proposition}[theorem]{Proposition}
\newtheorem{remark}{Remark}
\let\a=\alpha
\let\e=\varepsilon
\let\d=\delta

\let\p=\partial

\let\g=\gamma

\let\b=\beta

\numberwithin{equation}{section}

\let\hide\iffalse
\let\unhide\fi

\newcommand{\avkl}{\langle k-\ell\rangle}
\newcommand{\avl}{\langle \ell\rangle}
\newcommand{\avk}{\langle k\rangle}

\newcommand{\R}{\mathbb{R}}

\newcommand{\be}{\begin{equation}}
\newcommand{\bm}{\begin{multline}}
\newcommand{\ee}{\end{equation}}
\newcommand{\dd}{\mathrm{d}}

\newcommand{\xb}{x_{\mathbf{b}}}
\newcommand{\tb}{t_{\mathbf{b}}}

\newcommand{\Bes}{\begin{eqnarray*}}
\newcommand{\Ees}{\end{eqnarray*}}
\newcommand{\Be}{\begin{equation} }
\newcommand{\Ee}{\end{equation}}

\def\p{\partial}

\def\R{\mathbb{R}}
\def\d{\mathrm{d}}
\def\B{\begin{equation}}
\def\E{\end{equation}}
\def\BN{\begin{eqnarray*}}
\def\EN{\end{eqnarray*}}

\begin{document}
\title[Half-space problem on the Boltzmann equation]{Half-space problem on the Boltzmann equation with zero Mach number at infinity}

\author[H.-X. Chen]{Hongxu Chen}
%\thanks{}
\address[HXC]{School of Mathematical Sciences, Shenzhen University, Shenzhen, Guangdong 518060, China}
\email{hongxuchen.math@gmail.com}

\author[J.-L. Chen]{Jun-Ling Chen}
%\thanks{}
\address[JLC]{Department of Mathematics, The Chinese University of Hong Kong,
Shatin, Hong Kong, P.R.~China}
\email{junlingchen@cuhk.edu.hk}

\author[R.-J. Duan]{Renjun Duan}
%\thanks{}
\address[RJD]{Department of Mathematics, The Chinese University of Hong Kong,
Shatin, Hong Kong, P.R.~China}
\email{rjduan@math.cuhk.edu.hk}

\begin{abstract}
We study the long-time dynamics of the time-evolutionary Boltzmann equation with hard sphere collisions in the three-dimensional half-space  \( \mathbb{R}^2 \times \mathbb{R}^+\), subject to diffuse reflection boundary conditions and small perturbations around a global Maxwellian equilibrium. The far-field velocity is assumed to be at rest; namely, we take the zero Mach number at infinity. In the first goal, we construct global-in-time low-regularity solutions near Maxwellians. We leverage time-decay properties along the two-dimensional tangential direction to establish polynomial decay rates of solutions matching the 2D heat equation.  In the second goal, we further prove the propagation of Gevrey regularity: analyticity (Gevrey index 1) in the tangential spatial variable \(x_\parallel\), and Gevrey class with index 2 in the tangential velocity variable \(v_\parallel\), under suitably regular initial data. The proofs combine an \(L^1_k \cap L^p_k\) Fourier-space approach for decay estimates, macro-micro decomposition with \(L^2 - L^\infty\) frameworks adapted to unbounded domains, and weighted Gevrey norms to control regularity propagation, overcoming challenges from boundary effects and nonlinear interactions. 
\end{abstract}

\date{\today}
\subjclass[2020]{35Q20, 35B35, 35B40, 35B65}
	%35Q20 	Boltzmann equations
    %35B35  Stability in context of PDEs 
    %35B40  Asymptotic behavior of solutions to PDEs
    %35B65  Smoothness and regularity of solutions to PDEs
\keywords{Boltzmann equation, half-space problem, global existence, large-time behavior, propagation of regularity}

\maketitle
\tableofcontents

\section{Introduction}

This paper is concerned with the initial-boundary value problem on the Boltzmann equation in the half-space $\Omega=\mathbb{R}^2\times \mathbb{R}^+$ of three dimensions:
\begin{align}\label{op}
\begin{cases}
 & \p_t F + v\cdot \nabla_x F = Q(F,F),\\
 & F(t,x_1,x_2,0,v)|_{v_3> 0} = c_\mu \mu(v) \int_{u_3 < 0} F(t,x_1,x_2,0,u) |u_3| \,\dd u,\\
 &\lim\limits_{x_3\to \infty}F(t,x,v)=\mu,\\
 & F(0,x,v)= F_0(x,v).
 \end{cases}
\end{align}
Here, 
$F=F(t,x,v)\geq 0$ stands for the velocity distribution function of gas particles with velocity $v=(v_1,v_2,v_3)\in \R^3$ at time $t\geq 0$ and position $x=(x_1,x_2,x_3)\in \Omega$. The Boltzmann collision term is a bilinear integral operator acting only on velocity variable, and through the paper we consider only the hard sphere model taking the form of
\begin{equation*}
Q(F,G)=\int_{\mathbb{R}^3}\int_{\mathbb{S}^2}|(v-u)\cdot \omega|[F(u')G(v')-F(u)G(v)]\,\d\omega \dd u,
\end{equation*}
where the velocity pairs $(v,u)$ and $(v',u')$ are related by the $\omega$-representation:
\begin{equation*}
 v'=v-[(v-u)\cdot\omega]\omega,\quad u'=u+[(v-u)\cdot\omega]\omega,\quad \omega\in\mathbb{S}^2,
\end{equation*}
in terms of the conservation of momentum and energy for elastic collisions between molecules:
\begin{equation*}
 v+u=v'+u',\quad |v|^2+|u|^2=|v'|^2+|u'|^2.
\end{equation*}
For the boundary conditions, we have imposed the diffuse reflection boundary condition at the wall $x_3=0$ and the rest global Maxwellian at infinity $x_3=\infty$. Here,  
\begin{equation}\label{def.gm}
\mu=\mu(v):=\frac{1}{(2\pi)^{3/2}}e^{-\frac{|v|^2}{2}}
\end{equation}
is a normalized global Maxwellian with zero bulk velocity and the constant $c_\mu=\sqrt{2\pi}$ is chosen to satisfy $\int_{v_3\gtrless 0}c_\mu\mu(v) |v_3|dv=1$ so that $c_\mu\mu(v) |v_3|$ is a probability measure on the half velocity spaces $\{\R^3: v_3\gtrless 0\}$. Note that under the diffuse boundary, the mass flux is vanishing at the wall $x_3=0$, namely,
\begin{equation*}
\int_{\R^3} v_3F(t,x_1,x_2,x_3=0, v)\,\d v=0.
\end{equation*} 

We consider the perturbation solution of \eqref{op} around the global Maxwellian \eqref{def.gm} as $F=\mu+\sqrt{\mu}f$. Plugging this into \eqref{op}, the problem can be reformulated as
\begin{align}\label{f_eqn}
\begin{cases}
 & \p_t f + v\cdot \nabla_x f + \mathcal{L}f = \Gamma(f,f), \\
 & f(t,x_1,x_2,0, v)|_{v_3> 0} = c_\mu \sqrt{\mu(v)} \int_{u_3 < 0} f(t,x_1,x_2,0,u)\sqrt{\mu(u)}|u_3|\,\dd u, \\
 &\lim\limits_{x_3\to \infty}f(t,x,v)=0,\\
 & f(0,x,v)= f_0(x,v) := (F(0,x,v)-\mu)/\sqrt{\mu},
 \end{cases}
\end{align}
where the linearized collision term and nonlinear term are respectively denoted by 
\begin{align}
 & \mathcal{L}f:= -\mu^{-1/2}[Q(\mu,\sqrt{\mu}f) + Q(\sqrt{\mu}f,\mu)], \label{def.L}\\
 & \Gamma(f,f) := \mu^{-1/2} Q(\sqrt{\mu}f,\sqrt{\mu}f).\label{def.Ga}
\end{align}
 When applying Leibniz's formula, it will be convenient to work with the trilinear operator $\mathcal{T}$ defined by
\begin{equation}\label{matht}
\mathcal T (f,g, h)= \int_{\mathbb{R}^3}\int_{\mathbb{S}^2} |(v-u)\cdot \omega| h(u) [f(u')g(v')-f(u)g(v)] \dd \omega\dd u.
\end{equation}
%where $h$ is a function of $v$ variable only.
The bilinear operator $\Gamma$ in \eqref{def.Ga} and the above $\mathcal{T}$ are linked by
\begin{equation*}
	\Gamma(g,h)=\mathcal{T}(g,h,\mu^{1/2}).
\end{equation*}

Half‑space problems for the Boltzmann equation arise naturally in the kinetic description of gases in contact with a wall. They model, for instance, the Knudsen layer that forms near a solid boundary, where the distribution function undergoes rapid changes to accommodate the boundary condition, cf.~\cite{Sone}. In the general setting, we define the far field Maxwellian $M_\infty(v)$, the sound speed $c_\infty$ at rest and the Mach number $\mathcal{M}^\infty$ as in \cite{UYY03ex} by
\begin{equation*}
M_\infty(v)=\mu(v_1-u_\infty,v_2,v_3)=\frac{1}{(2\pi)^{3/2}}e^{-\frac{|v_1-u_\infty|^2+|v_2|^2+|v_3|^2}{2}},\quad c_\infty=\sqrt{\frac{5}{3}},\quad  \mathcal{M}^\infty=\frac{u_\infty}{c_\infty}.
\end{equation*}
The steady linearized and nonlinear half‑space problems have been studied extensively in case of inflow data at the wall. For the classical Milne and Kramer’s problems, the existence of boundary layer solutions was established by Bardos-Caflisch-Nicolaenko \cite{BCN86CPAM} for the linearized hard‑sphere Boltzmann equation under the assumption of zero Mach number $\mathcal{M}^\infty=0$. The same problem was studied by Coron-Golse-Sulem \cite{CGS88CPAM} for any $\mathcal{M}^\infty\in \R$. For the nonlinear Boltzmann equation with the hard sphere case, Ukai-Yang-Yu studied the existence of boundary layer solutions in \cite{UYY03ex} for $\mathcal{M}^\infty\notin \{0,\pm 1\}$ and the asymptotic stability in \cite{UYY04sta} for $\mathcal{M}^\infty<-1$; see also \cite{Golse08} and the survey \cite{BGS}.  Sakamoto-Suzuki-Zhang \cite{SSZJDE22} extent the one-dimensional half-line result in \cite{UYY04sta} to the three-dimensional half-space problem. Bernhoff-Golse \cite{BG21ARMA} proved the existence and uniqueness of a uniformly decaying boundary layer type solution of the Boltzmann equation. Results for other physical boundary conditions were also obtained. For the specular reflection boundary condition, Golse-Perthame-Sulem \cite{GPSu88} proved the existence and exponential spatial decay of boundary layer solutions to the nonlinear hard-sphere Boltzmann equation with the zero Mach number $\mathcal{M}^\infty=0$. Huang-Wang \cite{HW} generalized the result to the case of the diffuse reflection boundary condition. In those stationary boundary‑layer problems as in  \cite{UYY03ex}, the far‑field state is a non‑rest Maxwellian, and the solution exhibits exponential convergence towards it. The time‑asymptotic stability of such non‑trivial stationary solutions is a fundamental and still largely open problem for general $\mathcal{M}^\infty$, which would validate the physical relevance of the boundary‑layer profile.  

When the far‑field Mach number is zero (i.e., the gas at infinity is at rest and in thermal equilibrium), the stationary solution reduces to the trivial global Maxwellian with zero bulk velocity. In that case, the problem becomes the stability of the constant equilibrium state $\mu$ in \eqref{def.gm}. In case of the diffuse reflection boundary condition, for the zero‑Mach‑number half‑space, the far‑field decay of the linearized problem is slower than in the non‑zero‑Mach case, because the shear and heat modes become purely diffusive. In particular, when the tangential variables are one‑dimensional, the expected decay rate in large time is that of a one‑dimensional heat equation, which is too weak to close a nonlinear energy estimate with existing techniques; the global well‑posedness and asymptotic stability of small perturbations around the global Maxwellian $\mu$ in the half‑space \(\mathbb{R}\times\mathbb{R}^+\) remains an outstanding open problem. By contrast, when the tangential space is two‑dimensional, the diffusive effect is enhanced and the solutions of the linearized problem decay in large time like those of the 2D heat equation, which provides enough integrability in time to handle the nonlinear terms. This geometric effect is essential for our proof.

In the present paper, we consider the three‑dimensional half‑space \(\mathbb{R}^2\times\mathbb{R}^+\) with diffuse reflection boundary conditions, assuming that the far‑field velocity vanishes (i.e., zero Mach number $\mathcal{M}^\infty=0$ at infinity). We construct global‑in‑time small‑amplitude solutions near the global Maxwellian, prove their polynomial decay with the 2D heat‑equation rate, and establish the propagation of regularity: analytic regularity in the tangential spatial variable \(x_\parallel\) and Gevrey‑\(2\) regularity in the tangential velocity variable \(v_\parallel\). The three‑dimensional nature plays a crucial role in obtaining the decay estimates, while the Gevrey regularity in \(v_\parallel\) with index \(2\) is optimal in view of the polynomial growth along characteristics.

%The Boltzmann equation describes the statistical behavior of the dilute gas, and plays an important role in the kinetic theory. Its mathematical analysis has drawn considerable interest.

%In this paper, we study global Gevrey solutions on the Boltzmann equation in half-space $\Omega=\mathbb{R}^2\times \mathbb{R}^+$ with diffusive boundary condition on $x_3=0$.We construct global-in-time solutions and propagation in half-space $\Omega=\mathbb{R}^2\times \mathbb{R}^+$. Specially, we obtain the large-time behavior of solutions in domain $\Omega=\mathbb{R}^2 \times \mathbb{R}^+$.

%\subsection{Arrangement of the paper} 
%\textbf{Outline.} 

The rest of this paper is arranged as follows.
In Section \ref{sec.mr}, we represent the main results of this paper. 
In Section \ref{sec:prelim}, we recall several preliminary estimates. 
In Section \ref{sec:withoutder}, we obtain the global well-posedness in the half-space of $\mathbb{R}^2\times \mathbb{R}^+$.
In Section \ref{anainx}, we obtain the global analytic $x_\parallel$ estimate in the half-space
 of $\mathbb{R}^2\times \mathbb{R}^+$.
In Section \ref{anainmixy}, we obtain analytic $x_\parallel$ estimate and Gevrey $v_\parallel$ estimate in the half-space of $\mathbb{R}^2\times \mathbb{R}^+$. In the appendix Section \ref{appendi}, we prove the key equality in the proof of Lemma \ref{estdivk1ta}.

\section{Main results}\label{sec.mr}

%We focus on the half-space domains $\Omega=\mathbb{R}^2\times \mathbb{R}^+$ .

We use $k: = (k_1,k_2)\in \R^2$ to represent two-dimensional Fourier variable of the tangent physical variable $x_\parallel:=(x_1,x_2)\in \R^2$. Thus the Fourier transform of $f(t,x,v)$ is defined as
\begin{align*}
& \hat{f}(t,k,x_3,v) = \int_{\mathbb{R}^2} f(t,x,v) e^{-i k\cdot x_\parallel} \dd x_\parallel. 
\end{align*}
Denote $v_\parallel:=(v_1,v_2)\in \mathbb{R}^2$ as the tangent velocity variables. From \eqref{f_eqn}, the problem for $\hat{f}=\hat{f}(t,k,x_3,v)$ can be formulated as
\begin{align}
\begin{cases}
 & \p_t \hat{f} + iv_\parallel\cdot k \hat{f} + v_3 \p_{x_3} \hat{f} + \mathcal{L}\hat{f} = \hat{\Gamma}(\hat{f},\hat{f}),\\
 & \hat{f}(t,k,0,v)|_{v_3 > 0} = c_\mu \sqrt{\mu(v)} \int_{u_3 < 0 } \hat{f}(t,k,0,u) \sqrt{\mu(u)}|u_3| \dd u, \\
 &\lim\limits_{x_3\to \infty}\hat{f}(t,k,x_3,v)=0,\\
 & \hat{f}(0,k,x_3,v) = \hat{f_0}(k,x_3,v). 
\end{cases} \label{f_eqn_r2}
\end{align}
Here, recalling \eqref{def.Ga}, $\hat{\Gamma}(\hat{f},\hat{g})$ is the Fourier transform of the nonlinear term $\Gamma(f,g)$ with respect to $x_\parallel=(x_1,x_2)$:
\begin{align}
 & \hat{\Gamma}(\hat{f},\hat{g}) := \int_{\mathbb{R}^3}\int_{\mathbb{S}^2} |(v-u)\cdot \omega|\sqrt{\mu(u)}[\hat{f}(x_3,v')*_k \hat{g}(x_3,u')-f(x_3,v)*_k g(x_3,u)] \dd \omega \dd u,\notag
\end{align}
namely, it can be explicitly written as
\begin{align}
 \hat{\Gamma}(\hat{f},\hat{g})(t,k,x_3,v)= \int_{\mathbb{R}^3}\int_{\mathbb{S}^2}\int_{\R^2} |(v-u)\cdot \omega|\sqrt{\mu(u)}[&\hat{f}(t,k-\ell,x_3,v') \hat{g}(t,\ell,x_3,u')\notag\\
 &-f(t,k-\ell,x_3,v) g(t,\ell,x_3,u)]\d \ell \dd \omega \dd u.
 \notag
\end{align}

 Similarly as above we denote by $\hat{\mathcal{T}}(\hat{f},\hat{g},h)$ the Fourier transform of $ \mathcal T (f,g,h)$ in \eqref{matht} with respect to $x_\parallel$, that is, for any functions $h=h(v)$ that only depends on $v$ variable,
\begin{multline}\label{def.hatgamma}
\hat{\mathcal{T}}(\hat{f}, \hat g, h)
=\int_{\mathbb{R}^3}\int_{\mathbb{S}^2} \int_{\R^2}|(v-u)\cdot \omega| h(u)[\hat{f}(t,k-\ell,x_3,v') \hat{g}(t,\ell,x_3,u')\\
 -f(t,k-\ell,x_3,v) g(t,\ell,x_3,u)]\d \ell \dd \omega \dd u.
\end{multline}
Here, we observe that 
\begin{equation}\label{lingata}
 \hat\Gamma(\hat f, \hat g)=\hat {\mathcal T}(\hat f, \hat g, \mu^{1/2}). 
\end{equation}

 Denote the macroscopic component as $\mathbf{P}{\hat{f}}$, which represents the projection from $L^2_v$ to $\ker \mathcal{L}=\text{span} (\{\sqrt{\mu(v)},v\sqrt{\mu(v)},\frac{1}{2}(|v|^2-3)\sqrt{\mu(v)}\}) $:
\[\mathbf{P}\hat{f}:= \Big(\hat{a}+\hat{\mathbf{b}}\cdot v + \hat{c}\frac{|v|^2-3}{2} \Big)\sqrt{\mu(v)},\]
where $\hat{a}$, $\hat{\mathbf{b}}=(\hat{b}_1,\hat{b}_2,\hat{b}_3)$ and $\hat{c}$ are functions of $(t,k,x_3)$ for $\hat{f}=\hat{f}(t,k,x_3,v)$. Denote an exponential velocity weight as
\begin{align}\label{weight_w}
w(v) := e^{\theta |v|^2}, \ 0<\theta< \frac{1}{8}.
\end{align}
Denote the $P_\gamma {\hat{f}}$ as the projection to the diffuse reflection at the wall $x_3=0$:
\begin{align*}
 & P_\gamma {\hat{f}} (t,k,0, v) := c_\mu \sqrt{\mu(v)} \int_{u_3<0 } \hat{f}(t,k,0,u) \sqrt{\mu(u)} |u_3| \dd u,\quad v_3 > 0.
\end{align*}

Our first result answers the global well-posedness and asymptotic stability of the problem in the half space $\mathbb{R}^2\times \mathbb{R}^+$.

\begin{theorem}\label{thm:l1k_lpkexg}
Let $2<p\leq \infty$ and $\sigma = 2(1-\frac{1}{p})-2\e > 1$ with $\e>0$ small enough, then there exist constants $0<\delta_0 \ll 1$ and $C>0$ such that if the initial data $f_0(x,v)$ with $F_0(x,v):=\mu+\sqrt{\mu}f_0(x,v)\geq 0$ satisfies
\begin{equation}
\Vert \hat{f}_0\Vert_{L^1_k L^2_{x_3,v}} +\Vert w\hat{f}_0\Vert_{L^1_k L^\infty_{x_3,v}} + \Vert \hat{f}_0\Vert_{L^p_k L^2_{x_3,v}} < \delta_0, \label{initial_assumptionexg}
\end{equation}
then there exists a unique solution $\hat{f}(t,k,x_3,v)$ to \eqref{f_eqn_r2} such that $F(t,x,v)=\mu+\sqrt{\mu}f(t,x,v)\geq 0$ 
%with $\int_{\O}\int_{\mathbb{R}^3}\sqrt{\mu(v)}f(t,x,v) \dd v \dd x=0$ 
and the following estimate is satisfied:
\begin{align}
&\Vert (1+t)^{\sigma/2} \hat{f}\Vert_{L^1_kL^\infty_T L^2_{x_3,v}} +\Vert (1+t)^{\sigma/2} w \hat{f}\Vert_{L^1_k L^\infty_{T,x_3,v}} + \Vert \hat{f}\Vert_{L^p_k L^\infty_T L^2_{x_3,v}} \notag\\& \leq C\big[\Vert \hat{f}_0\Vert_{L^1_k L^2_{x_3,v}} +\Vert w\hat{f}_0\Vert_{L^1_k L^\infty_{x_3,v}} + \Vert \hat{f}_0\Vert_{L^p_k L^2_{x_3,v}}\big], 
\label{f_estimateexg}
\end{align}
for any $T>0$. 
\iffalse Moreover, it also holds that
\begin{align}
 & \Vert (1+t)^{\sigma/2} \hat{f}\Vert_{L^1_k L^\infty_T L^2_{x_3,v}} + \Vert (1+t)^{\sigma/2} (\mathbf{I}-\mathbf{P}) \hat{f} \Vert_{L^1_k L^2_{T,x_3,\nu}} + |(1+t)^{\sigma/2}(1-P_\gamma)\hat{f}|_{L^1_k L^2_{T,\gamma_+}} \notag \\
 & + \Big\Vert (1+t)^{\sigma/2} \frac{|k|}{\sqrt{1+|k|^2}}(\hat{a},\hat{\mathbf{b}},\hat{c})\Big\Vert_{L^1_k L^2_{T,x_3}} \leq C\Vert w\hat{f}_0\Vert_{L^1_k L^\infty_{x_3,v}} + C\Vert \hat{f}_0\Vert_{L^p_k L^2_{x_3,v}}. \label{f_estimate_2}
\end{align}\fi
\end{theorem}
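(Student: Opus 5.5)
The plan is to close a bootstrap (continuity) argument in the mixed norm
\[
X_T:=\Vert (1+t)^{\sigma/2}\hat f\Vert_{L^1_kL^\infty_TL^2_{x_3,v}}+\Vert (1+t)^{\sigma/2}w\hat f\Vert_{L^1_kL^\infty_{T,x_3,v}}+\Vert \hat f\Vert_{L^p_kL^\infty_TL^2_{x_3,v}},
\]
with local existence and positivity $F\ge0$ obtained by a standard iteration $\p_t\hat f^{n+1}+iv_\parallel\cdot k\hat f^{n+1}+v_3\p_{x_3}\hat f^{n+1}+\nu\hat f^{n+1}=K\hat f^n+\hat\Gamma(\hat f^n,\hat f^n)$, written in the gain/loss form for $F$. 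The core is an a priori estimate $X_T\le C\big[\text{data}\big]+CX_T^2$ for all $T$. Uniqueness then follows from the same estimates applied to the difference of two solutions.

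\textbf{Step 1 ($L^2$ energy with macro dissipation, pointwise in $k$).} For fixed $k$, I will take the real part of the $L^2_{x_3,v}$ inner product of \eqref{f_eqn_r2} with $\hat f$. Since $\mathrm{Re}\,(iv_\parallel\cdot k\hat f,\hat f)=0$ and the diffuse boundary gives $\frac12|(1-P_\gamma)\hat f|^2_{\gamma_+}$, this yields dissipation of $(\mathbf I-\mathbf P)\hat f$ in $L^2_\nu$ and of the boundary part. The macroscopic dissipation comes from testing the equation against suitable velocity moments: I will construct test functions solving elliptic problems in $x_3\in\R^+$ with symbol $|k|^2-\p_{x_3}^2$ (the usual $abc$ argument, adapted to the half-line and the Fourier variable). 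This should give a functional $\mathcal E(t,k)\sim\Vert\hat f\Vert_{L^2}^2$ with
\[
\p_t\mathcal E+\lambda\Big[\Vert(\mathbf I-\mathbf P)\hat f\Vert_\nu^2+|(1-P_\gamma)\hat f|_{\gamma_+}^2+\tfrac{|k|^2}{1+|k|^2}\Vert(\hat a,\hat{\mathbf b},\hat c)\Vert^2\Big]\lesssim |(\hat\Gamma,(\mathbf I-\mathbf P)\hat f)|+\text{nonlinear moment terms}.
\]
The zero Mach number is the delicate point here. Because the mass flux vanishes at the wall and $\hat b_3=0$ on $\gamma$, the boundary terms in the $a$ and $b$ estimates vanish or are absorbed by $|(1-P_\gamma)\hat f|_{\gamma_+}$. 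The degeneracy in $k$ is exactly the factor $|k|^2/(1+|k|^2)$, which matches a 2D heat equation.

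\textbf{Step 2 (time decay via $L^1_k\cap L^p_k$).} Step 1 gives $\mathcal E(t,k)\lesssim e^{-\lambda \frac{|k|^2}{1+|k|^2}t}\mathcal E(0,k)+\text{Duhamel}$. I will split frequencies. For $|k|\ge1$ there is exponential decay. For $|k|\le1$, Hölder gives
\[
\int_{|k|\le1}e^{-\lambda|k|^2t}|\hat f_0(k)|\,\d k\le \Vert e^{-\lambda|k|^2t}\Vert_{L^{p'}_k}\Vert\hat f_0\Vert_{L^p_k}\lesssim (1+t)^{-(1-1/p)}\Vert \hat f_0\Vert_{L^p_k},
\]
which produces the rate $(1+t)^{-\sigma/2}$ (the $\e$-loss gives room in time integrals). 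To handle the nonlinear terms I will use a time-weighted energy: multiply by $(1+t)^{\sigma}$ and control the weight derivative $\sigma(1+t)^{\sigma-1}\mathcal E$ by splitting frequencies, interpolating the low-frequency part against the $L^p_k$ bound, and using $\sigma>1$ for time integrability. The nonlinear term is bounded through the convolution structure in $k$ by Young's inequality $L^1_k*L^1_k\to L^1_k$ and $L^1_k*L^p_k\to L^p_k$. This gives contributions of the form $\Vert w\hat f\Vert_{L^1_kL^\infty}\Vert\hat f\Vert_{L^1_kL^2}$ weighted in time, and these are integrable because $\sigma>1$.

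\textbf{Step 3 ($L^\infty$ via $L^2$--$L^\infty$).} Next I will bound $\Vert (1+t)^{\sigma/2}w\hat f\Vert_{L^1_kL^\infty}$. Writing $h=w\hat f$, I will integrate along characteristics in $(x_3,v_3)$; the term $iv_\parallel\cdot k$ only contributes a unimodular phase. Backward characteristics hit the wall and are continued through the diffuse boundary, and I will iterate Duhamel twice in $K_w$ (Vidav's iteration). The stochastic cycles terminate after finitely many bounces, up to a small remainder. The double-$K$ term reduces to an $L^2_{x_3,v}$ norm via change of variables $v_3'\mapsto x_3-(t-s)v_3'$, which is one-dimensional and therefore needs care. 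The remainder is the bootstrap quantity from Step 2. Nonlinear terms are handled by $|w\Gamma(f,f)|\lesssim\nu\Vert w f\Vert_\infty^2$ with a convolution in $k$.

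I expect the main obstacle to be Step 3 combined with the unbounded $x_3$ direction. In the half-line only $v_3$ is available for the change of variables, so the Jacobian is $(t-s)^{-1}$ in one dimension. I plan to handle this by splitting $|t-s|\le\e$ and using the extra integration in $v_3''$ together with the exponential decay $e^{-\nu(t-s)}$ to keep the time weights. If the estimates close with $\delta_0$ small, the continuity argument yields \eqref{f_estimateexg}.
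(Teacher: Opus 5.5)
Your proposal is correct and follows essentially the same route as the paper: an $L^2_{x_3,v}$ energy estimate with the degenerate macroscopic dissipation $\frac{|k|^2}{1+|k|^2}\Vert(\hat a,\hat{\mathbf b},\hat c)\Vert^2$ from elliptic test functions on $(0,\infty)$ (no Poincar\'e inequality), the time-weighted $L^1_k\cap L^p_k$ argument using $\sigma>1$, an $L^2$--$L^\infty$ bound along characteristics, and a bootstrap closed in $L^2_{x_3}\cap L^\infty_{x_3}$. One simplification you miss: in the half-space a backward characteristic meets the wall at most once, since after the diffuse bounce the new velocity has $u_3<0$ and its backward trajectory never returns, so there is no stochastic-cycle remainder; likewise the one-dimensional Jacobian gives only an integrable factor $(t-s)^{-1/2}$ after Cauchy--Schwarz, so the $|t-s|\le\e$ splitting is not needed.
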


\begin{remark}
Theorem \ref{thm:l1k_lpkexg} provides the first global solution to the nonlinear Boltzmann equation in the half-space with the same decay rate as the 2D heat equation. A global solution to the linear Boltzmann equation and the Fokker-Planck equation with absorbing boundary condition has been constructed in \cite{bouin2025half},  in which the decay rate is the same as the three dimensional heat equation. It remains an open and interesting problem to improve the nonlinear decay rate with diffuse reflection boundary condition to the optimal one in the current problem.
\end{remark}

With the well-posedness result in Theorem \ref{thm:l1k_lpkexg}, we further pursue the propagation of analytic and Gevrey regularity of the obtained global solutions. It should be noted that high-order derivatives are not possible in the vertical direction in the presence of boundary; therefore, we only focus on the regularity estimate in the horizontal direction. We denote $\p_{v_\parallel}^\a=\p_{v_1}^{\a_1}\p_{v_2}^{\a_2}, \ \a=(\a_1, \a_2) \in \mathbb{Z}^2_+$ as the tangent $v$-derivative. In the following, we define Gevrey regularity in the tangential variable. We refer to all norm notations in Section \ref{sec:notation}. 
\begin{definition}\label{defgevr2}
The function space $X_\rho (\mathbb{R}^2 \times \mathbb{R}^+\times \mathbb{R}^3) $ consists of all smooth in $x_\parallel,v_\parallel$ functions $h(x,v)$ such that the norm $ \Vert h\Vert_{X_\rho} < +\infty$, where
\begin{align*}
 & \Vert h\Vert_{X_\rho} := \sup_{m, \a\geq 0}L_{\rho, m, |\a|} \Vert w \langle k \rangle ^{m}\p_{v_\parallel}^{\a}\hat h \Vert_{L^1_k L^\infty_{x_3,v}}+\sup_{m\geq 0}L_{\rho, m,0 }\Big(\Vert \avk^{m}\hat h\Vert_{L^1_k L^2_{x_3,v}} +\Vert \avk^{m}\hat h\Vert_{L^p_k L^2_{x_3,v}}\Big),
\end{align*}
with $2<p\leq \infty$ and
\begin{equation}\label{r2dalp}
L_{\rho,m, |\a|} := \frac{(16/\nu_0)^{m}\rho^{m+|\a|+1}(m+2)^2(|\a|+2)^2}{(m+|\a|)!|\a|!}.
\end{equation}
Here $\rho>0$ is the analytic radius, $m\in \mathbb{Z}_+$ represents the order of derivative in $x_\parallel$($k$ in frequency space), and $\alpha \in \mathbb{Z}_+^2$ represents the order of derivative in $v_\parallel$.

We say $ h=h(x, v) $ is $G^{r}_{x_\parallel}(\Omega \times \mathbb{R}^3)$ in $x_\parallel$ and $G^s_{v_\parallel}(\Omega\times \mathbb{R}^3)$ in $v_\parallel$ with index $r, s>0$
if $h \in C^\infty_{x_\parallel,v_\parallel}(\Omega\times \mathbb{R}^3)$ and there exists a constant $C>0$ such that
\begin{eqnarray*}
\Vert\avk^m\partial_{v_\parallel}^\a \hat h \Vert _{L^1_k L^\infty_{x_3,v}} \leq C^{m+|\a| +1}(m!)^r(|\a|!)^s  , \ \ \  \text{for any }	 m \in \mathbb{Z}_+ \text{ and } \a \in \mathbb{Z}_+^2 .
\end{eqnarray*}
Here $r, s$ are called the Gevrey index. For instance, $G^1_{x_\parallel}(\Omega\times \mathbb{R}^3)$ is the space of analytic functions in $x_\parallel$, and $G^r_{x_\parallel}(\Omega\times \mathbb{R}^3)$ with $0<r<1$ is the space of ultra-analytic functions in $x_\parallel$.
\end{definition}

Below we state the regularity propagation results in the domain $\Omega=\mathbb{R}^2\times \mathbb{R}^+$.

\begin{theorem}\label{gevr2}
Let all assumptions in Theorem \ref{thm:l1k_lpkexg} be satisfied. Then there exist constants $0<\delta_0 \ll 1$ and $C>0$ such that if the initial data $f_0(x,v)$ further satisfies
\begin{equation}
\Vert f_0\Vert_{X_{\rho}} < \delta_0, \label{initial_assumption}
\end{equation}
for some $0<\rho \leq 1/(2C)$, then the solution $f(t,x,v)$ further satisfies the following estimate:
\begin{multline*}
\sup_{m\geq 0}L_{\rho,m,0}\Big(\Vert(1+t)^{\sigma/2}\avk^{m} \hat{f}\Vert_{L^1_kL^\infty_T L^2_{x_3,v}}+\Vert\avk^{m} \hat{f}\Vert_{L^p_kL^\infty_T L^2_{x_3,v}} \Big) \\ \quad+\sup_{m,\a\geq 0}L_{\rho,m,|\a|}\Vert(1+t)^{\sigma/2} w\langle k \rangle ^{m}\p_{v_\parallel}^{\a}\hat{f}\Vert_{L^1_k L^\infty_{T,x_3,v}}\leq C \Vert f_0\Vert_{X_{\rho}}, 
\end{multline*}
for any $T>0$.
Equivalently, the solution $f(t,x,v)$ is $G^1_{x_\parallel}$ in $x_\parallel$ and $G^2_{v_\parallel}$ in $v_\parallel$, and satisfies that
\begin{equation*}
\forall m\in\mathbb{Z}_+, \ \a \in \mathbb{Z}^2_+, \ \Vert (1+t)^{\sigma/2}w \langle k \rangle ^{m}\p_{v_\parallel}^{\a}\hat f(t)\Vert_{L^1_k L^\infty_{T,x_3,v}} \leq C^{m+|\a|+1}m!(|\a|!)^2
\end{equation*}
for any $T>0$. 
\end{theorem}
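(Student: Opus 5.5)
We will prove Theorem \ref{gevr2} by an a priori estimate on the weighted quantity
\[
\mathcal{E}_T := \sup_{m\ge 0}L_{\rho,m,0}\Big(\|(1+t)^{\sigma/2}\avk^m\hat f\|_{L^1_kL^\infty_TL^2_{x_3,v}}+\|\avk^m\hat f\|_{L^p_kL^\infty_TL^2_{x_3,v}}\Big)+\sup_{m,\a}L_{\rho,m,|\a|}\|(1+t)^{\sigma/2}w\avk^m\p_{v_\parallel}^\a\hat f\|_{L^1_kL^\infty_{T,x_3,v}},
\]
and show $\mathcal E_T\le C\|f_0\|_{X_\rho}+C\mathcal E_T^2$. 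Together with the small solution from Theorem \ref{thm:l1k_lpkexg} and continuity in $T$, this closes the argument. Uniqueness is already given, so we only need to propagate regularity. To make the manipulations rigorous, we would run everything on truncated sums $m\le M$, $|\a|\le N$ (or on an approximating smooth sequence) and then let $M,N\to\infty$.

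\emph{Step 1: pure $x_\parallel$ derivatives ($\a=0$).} Multiplication by $\avk^m$ commutes with the linear operator $\p_t+iv_\parallel\cdot k+v_3\p_{x_3}+\mathcal L$ and with the diffuse boundary operator $P_\gamma$, since neither acts on $k$. So $\avk^m\hat f$ solves \eqref{f_eqn_r2} with source $\avk^m\hat\Gamma(\hat f,\hat f)$. We can therefore apply the linear $L^1_k\cap L^p_k$ decay estimates and the $L^2$–$L^\infty$ estimates from the proof of Theorem \ref{thm:l1k_lpkexg} directly. For the nonlinear term we use $\avk^m\le 2^m(\langle k-\ell\rangle^m+\avl^m)$, or better a binomial splitting $\avk^m\le\sum_j\binom mj\langle k-\ell\rangle^j\avl^{m-j}$. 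The weight $L_{\rho,m,0}$ is built so that
\[
\sum_j\binom mj\frac{1}{L_{\rho,j,0}L_{\rho,m-j,0}}\lesssim \frac{1}{\rho L_{\rho,m,0}},
\]
because $\sum_j (j+2)^{-2}(m-j+2)^{-2}\lesssim (m+2)^{-2}$. With $\rho\le1/(2C)$, the factor $\rho^{-1}$ is absorbed by the powers $\rho^{m+1}$. The time weights are handled exactly as in Theorem \ref{thm:l1k_lpkexg}, using $\sigma>1$.

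\emph{Step 2: mixed derivatives $\p_{v_\parallel}^\a$ in $L^\infty$.} Applying $\p^\a_{v_\parallel}$ to the equation produces the commutator $[\p_{v_\parallel}^\a,iv_\parallel\cdot k]=\sum_{|\beta|=1}\binom{\a}{\beta} ik^\beta\p^{\a-\beta}_{v_\parallel}$. This term is bounded by $|\a|\,\avk\,|\p^{\a-\beta}\hat f|$, which converts one $v$-derivative into one $x$-derivative. The definition of $L_{\rho,m,|\a|}$, with its $(m+|\a|)!\,|\a|!$, is designed to absorb exactly this: the ratio $|\a| L_{\rho,m,|\a|}/L_{\rho,m+1,|\a|-1}$ is of size $O(\nu_0/16)$. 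We also need the commutator $[\p^\a_{v_\parallel},\mathcal L]$, obtained by writing $\mathcal L=\nu-K$ and differentiating $\nu$ and the kernel of $K$. The derivatives of $\nu$ and $K$ are bounded with lower-order terms, so they contribute terms summable against $L$. The nonlinear term $\p^\a\hat\Gamma$ is expanded by Leibniz using the trilinear form $\hat{\mathcal T}$ in \eqref{def.hatgamma}, where derivatives fall on $f$, $g$, and $h=\mu^{1/2}$; here $\p^\beta\mu^{1/2}$ is controlled by $C^{|\beta|}\sqrt{|\beta|!}\,\mu^{1/4}$. Each derivative-weighted piece is then estimated along characteristics with the mild formulation: the damping $e^{-\nu(v)(t-s)}$ (with $\nu\ge\nu_0$) controls the transfer term, and the factor $16/\nu_0$ in $L$ is what makes the time integral $\int e^{-\nu_0(t-s)}\dd s$ compatible with the gain.

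\emph{Boundary and main obstacle.} Only tangential derivatives are taken, so $P_\gamma$ commutes with $\avk^m$, and $\p^\a_{v_\parallel}$ acting on $c_\mu\sqrt{\mu(v)}\int\hat f|u_3|\sqrt\mu$ falls only on $\sqrt{\mu(v)}$. Its size, $C^{|\a|}\sqrt{|\a|!}\sqrt{\mu}$, is smaller than Gevrey-2, and the boundary integral is controlled by the $\a=0$ estimates from Step 1. The step we expect to be hardest is the $L^\infty$ estimate for $w\avk^m\p^\a\hat f$. There the transport commutator has to be summed iteratively along stochastic (diffuse-reflection) cycles in the unbounded domain, and at the same time the time weight $(1+t)^{\sigma/2}$ must be preserved. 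Our first attempt would be to treat the commutator term as a source of strictly lower $|\a|$, then induct on $|\a|$ while taking the supremum over $m$ with the $L$-weights, and finally close using the smallness of $\nu_0^{-1}\cdot(\nu_0/16)$ together with the $L^1_k$ decay already established in Step 1.
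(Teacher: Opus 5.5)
\textbf{The $K$ term under $v_\parallel$-derivatives is not handled.} There is a genuine gap in your Step 2, and it sits where the real difficulty of the $v_\parallel$-Gevrey estimate lies: the operator $K$. Your sentence ``the derivatives of $\nu$ and $K$ are bounded with lower-order terms'' is true for $\nu$ but not for $K$, for two reasons.
\begin{itemize}
\item The hard-sphere kernel is singular like $|v-u|^{-1}$ on the diagonal. So $\partial_{v_\parallel}^{\alpha}\mathbf k(v,u)$ behaves like $|v-u|^{-1-|\alpha|}$, which is not integrable in $u\in\mathbb R^3$ once $|\alpha|\ge 2$.
\item Suppose instead you shift all derivatives onto $\hat f$ (for instance via $u=v+\eta$), so that only $(\partial_{v_\parallel}+\partial_{u_\parallel})$-derivatives land on the kernel. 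Then the same-order term $K(\partial_{v_\parallel}^{\alpha}\hat f)$ remains, with $K$ applied to the top-order derivative. This term is not small in $L^\infty$: $\int \mathbf k_\theta(v,u)\,du\lesssim (1+|v|)^{-1}$ is small only for large $|v|$. The usual $L^2$--$L^\infty$ bootstrap would therefore need an $L^2_{x_3,v}$ bound on $\partial_{v_\parallel}^{\alpha}\hat f$. That bound is not available in the half-space. The commutator $[\mathcal L,\partial_{v_\parallel}]\hat f$ cannot be controlled in the $L^2$ energy estimate, because there is no Poincar\'e inequality and only the degenerate dissipation $\frac{|k|}{\sqrt{1+|k|^2}}(\hat a,\hat{\mathbf b},\hat c)$.
\end{itemize}
As written, your induction on $|\alpha|$ has no mechanism for the $K$ contribution. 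Your closing argument, the ``smallness of $\nu_0^{-1}(\nu_0/16)$'', only addresses the transport commutator.

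\textbf{What the paper does instead.}
\begin{itemize}
\item It differentiates the mild formula, not the equation. In the $K$-term, $\hat f(s,k,x_3-(t-s)v_3,u)$ depends on $v$ only through $v_3$, and so does $t_{\mathbf b}$. Hence $\partial_{v_\parallel}$ falls only on the two exponentials and on $\mathbf k(v,u)$, and no top-order $\hat f$ term is ever created.
\item It writes $\partial^{\gamma}_{v_\parallel}=\partial^{\iota}_{v_\parallel}(\partial_{v_\parallel}+\partial_{u_\parallel}-\partial_{u_\parallel})^{\gamma-\iota}$ with $|\iota|=1$.
\item It proves the Gevrey-compatible factorial bound $\int\frac{w(v)}{w(u)}|\partial^\iota_{v_\parallel}(\partial_{v_\parallel}+\partial_{u_\parallel})^{\tau}\mathbf k|\,du\le C|\tau|!\,\nu(v)$. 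This is Lemma~\ref{estdivk1ta}, which needs Grad's explicit form of $\mathbf k$ and the identity in Lemma~\ref{lemma:n_derivative}.
\item It integrates the remaining $\partial_{u_\parallel}$'s by parts onto $\hat f$. One derivative stays on the kernel, where it is still integrable, so $\hat f$ carries at most $|\alpha|-1$ velocity derivatives.
\end{itemize}
That deficit of one derivative against the weight $L_{\rho,m,|\alpha|}$ produces a factor $\rho$. The hypothesis $\rho\le 1/(2C)$ is used exactly to absorb this linear term. In your Step 1 you invoke $\rho\le1/(2C)$ to ``absorb'' the $\rho^{-1}$ coming from the binomial sum. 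That runs the wrong way, since a smaller $\rho$ makes $\rho^{-1}$ larger. There $\rho^{-1}$ is simply a constant in the quadratic term, and it is handled by the smallness of $\delta_0$.

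\textbf{Two smaller points.}
\begin{itemize}
\item Your inductive treatment of the transport commutator takes a sup-norm bound at each order. Each step therefore pays the constant $C_*$ in $\int_0^t e^{-\nu(t-s)}(1+s)^{-\sigma/2}\,ds\le C_*\nu^{-1}(1+t)^{-\sigma/2}$. With the weight constant fixed at $16/\nu_0$, the one-step closure factor is $C_*$ times your $\nu_0^{-1}(\nu_0/16)$, up to the polynomial factors in $L$. You would need to check that this is below $1$. The paper avoids the issue by differentiating $e^{-i(v_\parallel\cdot k)(t-s)}$ directly and using $e^{-\nu t/8}t^i\le(8/\nu_0)^i i!$ once, which yields the clean factor $(1/2)^{|\beta-\gamma|}$.
\item In the half-space a backward characteristic reaches the wall at most once, and $t_{\mathbf b}$ does not depend on $v_\parallel$. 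There are no stochastic cycles to iterate, so the boundary term is the easy step, not the main obstacle.
\end{itemize}
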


\begin{remark}%\label{rmk:regularity}
The analytic regularity is achieved for the spatial variable $x_\parallel$. However, for the velocity variable, the characteristic $x-tv$ introduces an extra polynomial growth in $t$ when estimating $\p_{v_\parallel}^\alpha$. Consequently, analytic regularity in $v_\parallel$ can only be expected for local solution. To achieve a global result, this polynomial growth is controlled at the cost of increasing the Gevrey index to $G^2_{v_\parallel}$ in the velocity variable, as shown in Lemma \ref{lemma:enut_control}. The constant $16/\nu_0$ in Definition \ref{defgevr2} is introduced to absorb the constant term $8/\nu_0$ in \eqref{derienut}.
We refer to a detailed discussion of the Gevrey index and analytic radius $\rho$ in Section \ref{sec:proof_strategy}.
\end{remark}

\begin{remark}
It is straightforward to adapt the similar argument to establish global $G^r_{x_\parallel}$ in $x_\parallel$ and global $G^{r+1}_{v_\parallel}$ in $v_\parallel$ for $r > 1$, provided that the initial condition has the same regularity. For conciseness, we restrict to the case $r = 1$ in the paper.
\end{remark}

The proof of Theorem \ref{gevr2} is based on Theorem \ref{thminxm} and Theorem \ref{thm:apriestxvr2}, which gives the analytic estimate in $x_{\parallel}$ and the Gevrey estimate in $v_{\parallel}$, respectively.

\hide
{\color{red} Not sure if it would be better to include the other two statements regarding the global well-posedness and analytic $x_\parallel$ regularity.}
\unhide

\hide
\begin{remark}
The smallness condition is only required for $f$, but not required for the regularity. {\color{red} Not sure if this is correct yet.}
 
\end{remark}
\unhide

\subsection{Literature} 
The analysis of the Boltzmann equation can be classified into two frameworks: non-perturbative and perturbative. In the non-perturbation framework, one seeks global solutions under large initial data with finite mass, energy, and entropy. DiPerna and Lions \cite{MR1014927} proved the existence of renormalized solutions for the cutoff Boltzmann equation. Later, 
 Alexandre and Villani \cite{MR1857879} proved the case of non-cutoff Boltzmann equation. Readers can also refer to \cite{MR2116276,MR972541,MR1278244,MR1392006,MR1650006}. However, uniqueness and regularity remain open problems. %Currently, only results on conditional regularity are available: Imbert, Silvestre, Golse, Mouhot, Vasseur, $\cdots$.

 In the perturbation framework, one studies solutions where the initial data is a small perturbation of a global equilibrium state, such as Maxwellian.
There have been extensive works on the global well-posedness of the cutoff or non-cutoff Boltzmann equation over the past decades.
%We first aim to establish the global well-posedness in the half space $\Omega=\mathbb{R}^2\times \mathbb{R}^+$ or $\mathbb{T}^2 \times \mathbb{R}^+$. There have been extensive studies on the global well-posedness for the Boltzmann equation.

On the one hand, spectral analysis provides an alternative approach. This approach was initiated by Ukai \cite{MR363332}
in the whole space $\mathbb{R}^3$. Later on, Ukai and Yang \cite{ukai2006boltzmann} obtained the optimal decay rate through spectral analysis and semi-group method. For exterior domains, it can be viewed as a compact perturbation of the whole space problem. Ukai and Asano \cite{ukai1983steady,ukai1986steady} investigated the exterior problem in both steady and unsteady cases.

On the other hand, energy method has been found to have broad application. Here, we refer to some relevant works. In the absence of boundaries, the most fundamental domain are the torus and the whole space.
In torus, \cite{duan2021global} established global solutions with exponential decay for the non-cutoff Boltzmann equation. 
In the whole space $\mathbb{R}^3$, it is well-known that we can only expect polynomial decay rates of solutions; see \cite{bouin2020hypocoercivity} for a general hypocoercivity approach. 
 Guo \cite{guo2004boltzmann,guo2010bounded} constructed a global solution without time decay via a nonlinear energy method and entropy method, respectively. \cite{MR4688691} established global solutions together with almost optimal decay rate for the non-cutoff Boltzmann equation via an $L^1_k \cap L^p_k$ method in the Fourier frequency space without relying on the embedding $H^2(\mathbb{R}^3)\subset L^\infty(\mathbb{R}^3)$.
While for an infinite channel $\mathbb{R}\times \mathbb{T}^2$, Wang and Wang \cite{Teng2019} investigated the time-asymptotic stability of planar rarefaction
 wave for the three-dimensional Boltzmann equation by high-order Sobolev energy methods. Besides, \cite{duan20213d} studied the Vlasov-Poisson-Landau system utilizing the same method.

When physical boundaries are present,
a major breakthrough was the development of the $L^2$–$L^\infty$ framework by Guo \cite{G}. The $L^2$ estimate relies on macro-micro decomposition, while the $L^\infty$ estimate is typically achieved through characteristics with repeated interaction with the boundary. Global solutions with exponential convergence rate near global Maxwellian were established under several boundary conditions, including the diffusive reflection and specular reflection. It has a significant impact on the study of kinetic boundary value problems, leading to a period of subsequent development \cite{CKL, duan2019effects, EGKM, EGKM2,KL}. 

When extending the $L^2$-$L^\infty$ framework to an infinite layer domain, it is natural to begin with the case where the tangent variable $(x_1,x_2)$ is bounded with specular boundary condition, or where $(x_1,x_2)\in \mathbb{T}^2$. Related works include \cite{chen_mixed} and \cite{duan20243d}. The first work focused on a mixed boundary condition in which 
specular reflection occurs at two parallel plates, whereas the remaining area between these two specular regions is diffusive reflection. The second work investigates the Couette flow in the region $\mathbb{T}^2 \times (-1,1)$ with diffusive boundary condition. 
In a recent work, \cite{chenduanzhang2024} employed the $L^1_k\cap L^p_k$ method in the horizontal direction combined with an $L^2$–$L^\infty$ framework in the vertical direction to the infinite layer $\mathbb{R}^2\times(-1,1)$. The authors constructed global solutions for the cutoff Boltzmann equation with diffuse boundary in both $\mathbb{R}^2\times(-1,1)$ and $\mathbb{R}\times(-1,1)$, and further established a polynomial decay rate for $\mathbb{R}^2\times(-1,1)$. In the first domain of $\mathbb{R}^2\times(-1,1)$, the authors utilize the $L^1_k \cap L^p_k$ approach in the Fourier space, noting that the additional $L^p_k$ norm provides sufficient time decay. And in the second domain of $\mathbb{R}\times(-1,1)$, the same method fails due to the slower time decay property of solutions along the one-dimensional horizontal direction. An alternative method is utilizing a time-derivative combined with a direct $L^2 \cap L^\infty$ approach in physical space, but the large-time behavior is left unknown. In this paper, we consider the half-space $\Omega=\mathbb{R}^2\times \mathbb{R}^+$. Our work not only proves the global well-posedness, but also further demonstrates global Gevrey solutions.

The dissipation estimate in \cite{G} for the macroscopic component crucially relies on Poincaré inequality, which restricts its direct application to unbounded domains.
$L^6$ control of macroscopic quantities is considerable, thereby extending the $L^2-L^\infty$ framework to the unbounded domain. It is initially proposed by Esposito-Guo-Kim-Marra \cite{EGKM,EGKM2}. This method crucially utilize the Sobolev embedding $W^{2,\frac{6}{5}}\subset H^1 \subset L^6$ and compactness of the boundary to obtain proper control over the trace in macroscopic estimate. 
Recently, Jung \cite{MR4891727} used $L^2-L^3-L^6$ approach to prove global diffusive expansion of the Boltzmann equation in exterior domain.
%We note that extensive studies have been conducted on

Research on the propagation of regularity for the Boltzmann equation has yielded notable results. 
One line of research is dedicated to Sobolev or Lebesgue regularity. %we refer the reader to 
These works \cite{MR1324404,MR2123118,MR1737547,MR816620,MR946968,MR2081030} demonstrate that the solution maintains the Sobolev or Lebesgue regularity with initial data.
%Desvillettes-Mouhot\cite{MR2123118}
%Desvillettes-Villani\cite{MR1737547}landau, homogeneous,hard potentital 
%Gustafsson 1986 \cite{MR816620}
%Gustafsson 1988 \cite{MR946968}
%Mouhot-Villani\cite{MR2081030}
Another significant line focuses on Gevrey regularity.
 A foundational result was established by Ukai \cite{MR839310}, who proved the propagation of the Gevrey regularity in a finite time for both the cutoff and non-cutoff Boltzmann equation.
Subsequently, Desvillettes-Furioli-Terraneo \cite{MR2465814} proved the uniform propagation of Gevrey regularity in all time for the spatially homogenous Boltzmann equation with Maxwellian molecules, using the Wild expansion and the characterization of Gevrey regularity by the Fourier transform.
Recently, Liu \cite{MR4408391} proved the propagation of Gevrey regularity for the spatially homogenous Boltzmann equation with soft potential, which extended \cite{MR2465814}.
Zhang and Yin \cite{MR2925909} first studied the propagation of Gevrey regularity for the spatially homogeneous Boltzmann equation with index $\gamma,s$ satisfying $\gamma+2s\in(-1,1),s\in (0,1/2)$, and then they \cite{MR3110569} extended this analysis to the inhomogeneous case.
 In a related context, Chen-Li-Xu \cite{MR2425602} established Gevrey regularity propagation for landau equation in both Maxwellian molecules and hard potential.
 Moreover, a parallel line of inquiry concerns the propagation of singularities. 
In \cite{MR2435186}, the authors proved that the singularities in the initial data propagated
 like the free transportation for the spatially inhomogeneous Boltzmann equation.

 It is well-known that the angular singularity will induce the fractional diffusion in velocity for the non-cutoff Boltzmann equation. Here, we briefly discuss works concerning the regularity of weak solutions, such as \cite{MR4375857,MR4930523,MR4356815}. In contrast, such diffusion effects are absent for the cutoff Boltzmann equation. So, it is natural to investigate whether we can obtain the global analytic solutions in both $x$ and $v$ when the initial data is analytic. Our main goal is to construct global analytic $x_\parallel$ and Gevrey $v_\parallel$ estimate in the half-space, under analytic initial data. We also mention the well-posedness in Gevrey space for Prandtl equation without structural assumption \cite{MR4465902}. In recent work, Li-Xu-Zhang \cite{li2025globalgevreysolution3d} demonstrated that solutions to the 3D anisotropic Navier-Stokes system in a strip domain lie in space-time analytic or Gevrey space, provided the initial data is in Gevrey class only for the vertical direction; that is, they give the propagation in the vertical direction and regularity in the horizontal direction.

\subsection{Strategy of the proof}\label{sec:proof_strategy}

\begin{itemize}
    \item Global well-posedness
\end{itemize}
We begin by considering global well-posedness. In the domain $\Omega = \mathbb{R}^2 \times \mathbb{R}^+$, the primary difficulty in estimating the macroscopic components arises from the absence of a Poincaré inequality and the presence of the boundary. Our key observation is to exploit the diffusive effect in the horizontal directions and apply the dual argument by Esposito-Guo-Kim-Marra \cite{EGKM}. This leads to a degenerate macroscopic dissipation estimate \eqref{degenerate_macro}. To construct this estimate, we introduce a degenerate test function \eqref{elliptic_a}, whose right-hand side carries the additional frequency weight $\frac{|k|^2}{1+|k|^2}$. This weight preserves both a degenerate elliptic regularity estimate and a trace estimate in the half-space $x_3 \in (0,\infty)$. This degenerate structure yields a decay rate analogous to that of the two-dimensional heat equation (see Lemma \ref{lemma:full_energy_decayexg}) in the same spirit of the $L^1_k \cap L^p_k$ method employed in \cite{chenduanzhang2024}.

\begin{itemize}
    \item Analytic regularity in $x_\parallel$.
\end{itemize}
Since derivatives in the horizontal direction $x_\parallel$ do not change the equation's structure, the proof of the analytic-in-$x_\parallel$ estimate follows from a similar treatment to the well-posedness problem, except for the nonlinear term, where the Leibniz’s formula is applied. The term $(m+2)^2$ in Definition \ref{defgevr2} is introduced precisely to ensure the convergence of the resulting series arising from Leibniz's formula.

\hide
Next, we construct the global analytic $x_\parallel$ estimate in the half-space
in Section \ref{anainx}. Here, a smallness condition on $x$-regularity of $f_0$ is required. The proof follows a similar structure to the well-posedness argument in Section \ref{sec:withoutder}, except for the nonlinear term. Therefore, we only provide a detailed analysis of the nonlinear term and Leibniz’s formula is applied to deal with the nonlinear term. Moreover, the term $(m+2)^2$ in Definition \ref{defgevr2} or $(|\zeta|+2)^2$ in Definition \ref{defgevt2} are introduced to guarantee the convergence of the summation arising from Leibniz's formula. Note that the term $16/\nu_0$ in Definition \ref{defgevr2} and \ref{defgevt2} is not needed in this section.
\unhide

\begin{itemize}
    \item Gevrey regularity in $v_\parallel$.
\end{itemize}
For the mixed Gevrey derivative $x_\parallel,v_\parallel$ estimate in the half-space, one obstacle lies in the $L^2$ estimate to the commutator term $[ L, \p_v ]f$ due to the lack of the Poincaré inequality in the half-space. Instead of attempting to overcome this difficulty within an $L^2$ framework, we change our approach and estimate the $v_\parallel$-derivative directly via the method of characteristics.

Along the characteristic, a natural polynomial growth in time occurs from the $v_\parallel$ derivative: 
\begin{align*}
    &\p_{v_\parallel}^\alpha [f_0(x-vt,v)]   \sim t^{\alpha}\p_{x_\parallel}^\alpha f_0(x-vt,v).
\end{align*}
To absorb this growth and obtain a global estimate, we exploit the damping term $e^{-\nu(v)t}$ along the characteristic and control the polynomial growth as(also see Lemma \ref{lemma:enut_control})
\begin{align*}
    &   e^{-\nu(v) t/8}t^{|\alpha|} \leq \bigg(\frac{8}{\nu_0 }\bigg)^{|\alpha|}|\alpha|!, \ \ \forall i\in \mathbb{Z}_+.
\end{align*}
The extra exponential term and factorial, together with the analytic regularity in $x_\parallel$, indicate that the Gevrey-$2$ regularity in $v_\parallel$ is necessary.

The main difficulty then lies in controlling the contribution of the Boltzmann collision operator (given in \eqref{chara:K} and \eqref{chara:Gamma}) in the Gevrey-2 norm. A key issue arises when applying high-order derivatives $\p_{v_\parallel}^\alpha$ to the linearized Boltzmann operator $Kf= \int_{\mathbb{R}^3}\mathbf{k}(v,u)\dd u$. The kernel derivative $ \int_{\mathbb{R}^3}\p_{v_\parallel}^\alpha\mathbf{k}(v,u)\dd u$ is singular and not integrable when $|\a|\geq 2$. To treat the singularity, we rewrite the derivative $\partial_{v_\parallel}^\alpha$ in the form of $(\partial_{v_\parallel} + \partial_{u_\parallel} - \partial_{u_\parallel})^{\alpha}$. Crucially, the combined operator $(\partial_{v_\parallel} + \partial_{u_\parallel})^\alpha$ acting on $\mathbf{k}(v,u)$ does not produce a singularity, since the singular term $\frac{1}{|v-u|}$ in $\mathbf{k}(v,u)$ is symmetric in $v$ and $u$. Besides the singularity issue, the complexity of $\mathbf{k}(v,u)$ makes high order derivative computations delicate. We address this issue by establishing an inductive formula for the derivative of $\mathbf{k}(v,u)$ in Lemma \ref{estdivk1ta}, which shows that $(\p_{v_\parallel}+\p_{u_\parallel})^\alpha$ of $\mathbf{k}(v,u)$ can be controlled by $|\alpha|!$. This bound is naturally compatible with a Gevrey-2 framework.

For the remaining $\partial_{u_\parallel}^\alpha$ terms, we integrate by parts and move $\partial_{u_\parallel}^{\alpha-1}$ derivatives onto $f$, while retain one derivative on $\mathbf{k}(v,u)$, since $\partial_{u_\parallel} \mathbf{k}(v,u)$ is integrable. We cannot move the full $\alpha$ derivatives to $f$, because the $L^2$ estimate for the commutator is unavailable as mentioned earlier. The mismatch between $\p_{v_\parallel}^\alpha$ on LHS and only $v_\parallel^{\alpha-1}$ on RHS leads to a smallness condition on the analytic radius $\rho \leq \frac{1}{2C}$ in Theorem \ref{gevr2}.

The estimate to the nonlinear term \eqref{chara:Gamma} can be handled through careful treatment of the Leibniz's formula under the Gevrey $2$ norm. The contribution of the boundary term in the \eqref{chara:bdr} is directly controllable because the $v_\parallel$ only acts on the Maxwellian $\sqrt{\mu(v)}$ in the boundary condition \eqref{f_eqn}. We refer to details of the discussion above to proof of Proposition \ref{prop:inftyxvr2}.

\hide

\subsection{Outline}

In Section \ref{sec:prelim}, we collect the necessary preliminary estimates. Section \ref{sec:withoutder} establishes the global well-posedness of the system in both $\mathbb{R}^2\times \mathbb{R}^+$ and $\mathbb{T}^2\times \mathbb{R}^+$, thereby completing the proofs of Theorem \ref{thm:l1k_lpkexg} and Theorem \ref{gevr2}. In Section \ref{anainx}, we prove analytic regularity in $x_\parallel$ for both domains. Finally, Section \ref{anainmixy} constructs the Gevrey-2 regularity in $v_\parallel$, thereby concluding both Theorem \ref{thm:t2} and Theorem \ref{gevt2}.

\unhide

\section{Preliminary}\label{sec:prelim}

In this section, we first give notations and then give basic inequalities and estimates on the linearized collision operator $\mathcal{L}$ and nonlinear collision operator $\Gamma(\cdot,\cdot)$ as in \eqref{def.L} and \eqref{def.Ga}.

\subsection{Notation}\label{sec:notation}
Denote by $[p]$ the largest integer less than or equal to $p$.
For $\alpha = (\alpha_1,\alpha_2)$, denote $|\alpha|=\alpha_1+\alpha_2, \ \a!=\a_1!\a_2!.$
Correspondingly, the binomial coefficient for multi-index is given by 
\begin{align*}
 \binom{\a}{\b}=\binom{\a_1}{\b_1}\binom{\a_2}{\b_2}. 
 \end{align*}

We use general norms:
\begin{align*}
 & \Vert f\Vert_{L^2_\nu} := \Vert \nu^{1/2}f(v)\Vert_{L^2_v}=\Big(\int_{\R^3}\nu(v)|f(v)|^2\dd v\Big)^{1/2}, \\
 & \Vert f\Vert_{L^2_T} := \Big(\int_0^T |f(t)|^2 \dd t \Big)^{1/2}, \\ 
 & \Vert f\Vert_{L^\infty_T} := \sup_{0\leq t\leq T}|f(t)|,\\
&\gamma_{\pm}=\{(x,v)\in\mathbb{R}^2 \times \{x_3=0\}\times\mathbb{R}^{3}: v_3 \lessgtr 0 \} , \\
 & | \hat{f}|_{L^2_{\gamma_+}} := \Big( \int_{v_3<0} |\hat{f}(k,0,v)|^2 |v_3| \dd v\Big)^{1/2}, \\
 & |\hat{f}|_{L^1_k L^2_{T,\gamma_+}} := \int_{\mathbb{R}^2} \Big( \int_0^T \int_{v_3<0} |\hat{f}(t,k,0,v)|^2 |v_3| \dd v \dd t \Big)^{1/2} \dd k, 
\end{align*}
and 
\begin{align*}
 & \Vert \hat{f}\Vert_{L^1_k L^\infty_{T,x_3,v}}:= \int_{\mathbb{R}^2} \sup_{0\leq t\leq T,x_3\in (0,+\infty),v\in \mathbb{R}^3} |\hat{f}(t,k,x_3,v)| \dd k, \\
 & \Vert \hat{f} \Vert_{L^1_k L^2_{T,x_3,v}} := \int_{\mathbb{R}^2} \Big(\int_{0}^T \int_{0}^{+\infty} \int_{\mathbb{R}^3} |\hat{f} (t,k,x_3,v)|^2 \dd v \dd x_3 \dd t \Big)^{1/2} \dd k , \\
 & \Vert \hat{f} \Vert_{L^1_k L^2_{T,x_3,\nu}} := \int_{\mathbb{R}^2} \Big(\int_{0}^T \int_{0}^{+\infty} \int_{\mathbb{R}^3}\nu(v) |\hat{f} (t,k,x_3,v)|^2 \dd v \dd x_3 \dd t \Big)^{1/2} \dd k , \\
 & \Vert \hat{f} \Vert_{L^1_k L^\infty_T L^2_{x_3,v}}:= \int_{\mathbb{R}^2} \sup_{0\leq t \leq T} \Big(\int_{0}^{+\infty} \int_{\mathbb{R}^3} |\hat{f}(t,k,x_3,v)|^2 \dd v \dd x_3 \Big)^{1/2} \dd k, \\
 & \Vert \hat{f}\Vert_{L^p_k L^\infty_T L^2_{x_3,v}}:= \Big(\int_{\mathbb{R}^2} \sup_{0\leq t \leq T} \Big(\int_{0}^{+\infty} \int_{\mathbb{R}^3} |\hat{f}(t,k,x_3,v)|^2 \dd v \dd x_3 \Big)^{p/2} \dd k \Big)^{1/p}, \\
 & \Vert \hat{f}\Vert_{L^p_k L^2_{x_3,v}} := \Big(\int_{\mathbb{R}^2} \Big(\int_{0}^{+\infty}\int_{\mathbb{R}^3} |\hat{f} (k,x_3,v)|^2 \dd v \dd x_3 \Big)^{p/2} \dd k\Big)^{1/p},
\end{align*}
with $1\leq p<\infty$, and for $p=\infty$, the norms of $L^\infty_k L^\infty_T L^2_{x_3,v}$ and $L^\infty_k L^2_{x_3,v}$ are similarly defined in the standard way.

Moreover, $f \lesssim g$ means that there exists $C>1$ such that $f\leq C g$, and $f\leq o(1)g$ and $f \lesssim o(1)g$ both mean that there exists $0<\delta_0\ll 1$ such that $f\leq \delta_0 g$.

\subsection{Elementary inequalities}
In this subsection, we list some elementary inequalities that will be used frequently.
\begin{lemma}
For $\alpha \geq \beta,$ it holds
\begin{equation}\label{absab}
 \binom{\a}{\b}\leq \binom{|\a|}{|\b|}, 
\end{equation}
and 
\begin{align}
 & m!n!\leq (m+n)!\leq 2^{m+n} m!n!, \text{ for } m,n\geq 0,\label{jiechenmn}
 \\
 &\frac{n!}{(m+n)!}\leq \frac{1}{ m!}, \ \frac{(m+n)!}{(m+l)!} \leq \frac{m!}{(m+l-n)!}, \text{ for } \ l\geq n, \label{framn} \\
 & \frac{(m+l)!}{(m+n)!} \leq \frac{(m+a+l)!}{(m+a+n)!} \text{ for } l\geq n, \ a\geq 0, \label{framn2} \\
 &\frac{[(m+2)/2]!C_0^{(m+1)/2}}{m!} \leq C, \text{ for } m\geq 0.\label{MN2}
\end{align}

\end{lemma}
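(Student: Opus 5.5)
All five inequalities are elementary; I would prove each directly from the definitions of factorials and binomial coefficients, in the order listed. None of them is a real obstacle. The only one needing a little care is \eqref{MN2}, where the constant $C$ must be allowed to depend on $C_0$.

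\emph{Proof of \eqref{absab}.} I would use Vandermonde's identity
\begin{equation*}
\binom{|\a|}{|\b|}=\sum_{\g_1+\g_2=|\b|}\binom{\a_1}{\g_1}\binom{\a_2}{\g_2}.
\end{equation*}
All terms of the sum are nonnegative, and one of them is the term $(\g_1,\g_2)=(\b_1,\b_2)$, which equals $\binom{\a}{\b}$. This gives \eqref{absab}.

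\emph{Proof of \eqref{jiechenmn}.} I would rewrite it as $1\le\binom{m+n}{m}\le 2^{m+n}$. The upper bound holds because a single binomial coefficient is bounded by $\sum_j\binom{m+n}{j}=2^{m+n}$.

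\emph{Proof of \eqref{framn} and \eqref{framn2}.} I would compare products of consecutive integers.
\begin{itemize}
\item The first part of \eqref{framn} is $\binom{m+n}{m}\ge 1$, divided through by $m!$.
\item For the second part of \eqref{framn}, the inverse of the left side is $(m+n+1)\cdots(m+l)$. The inverse of the right side is $(m+1)\cdots(m+l-n)$. Both are products of $l-n$ factors, and the $j$-th factor of the first, $m+n+j$, is at least the $j$-th factor of the second, $m+j$. So the first product is the larger, which is the claimed inequality.
\item For \eqref{framn2}, the left side equals $\prod_{j=1}^{l-n}(m+n+j)$ and the right side equals $\prod_{j=1}^{l-n}(m+a+n+j)$. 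Each factor increases when $a\ge0$ is added.
\end{itemize}

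\emph{Proof of \eqref{MN2}.} Set $j=[(m+2)/2]\le m/2+1$.
\begin{itemize}
\item For small $m$, say $m\le 4$, the finitely many quotients are simply bounded by a constant depending on $C_0$.
\item For $m\ge 4$ we have $j\le m$. I would then write $m!/j!=(j+1)\cdots m$. This product has $m-j\ge m/2-1$ factors, each at least $j+1\ge m/2$, so
\begin{equation*}
\frac{j!\,C_0^{(m+1)/2}}{m!}\le \frac{C_0^{(m+1)/2}}{(m/2)^{m/2-1}}=C_0^{3/2}\Big(\frac{m}{2}\Big)\Big(\frac{2C_0}{m}\Big)^{m/2}.
\end{equation*}
Once $m\ge 4C_0$, the last factor is at most $2^{-m/2}$, so the whole expression tends to $0$ as $m\to\infty$. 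It is therefore bounded uniformly in $m$, by a constant $C=C(C_0)$.
\end{itemize}
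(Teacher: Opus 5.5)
Your proof is correct and follows essentially the paper's route. You use the same binomial-sum bound $\binom{m+n}{n}\le 2^{m+n}$ for \eqref{jiechenmn}, the same factor-by-factor comparison of products of $l-n$ consecutive integers for \eqref{framn} and \eqref{framn2}, and the same idea for \eqref{MN2}: that $m!/[(m+2)/2]!$ is a product of about $m/2$ factors of size about $m/2$. You make \eqref{MN2} quantitative where the paper only appeals to the super-exponential growth of $(m/2)!$, and your Vandermonde argument supplies a proof of \eqref{absab}, which the paper states without proof.

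One small slip: in your displayed identity for \eqref{MN2} the prefactor should be $C_0^{1/2}$, not $C_0^{3/2}$. This does not affect the conclusion.
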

\begin{proof}
We give the proof for the second inequality of \eqref{jiechenmn}. From the binomial theorem we have
 \begin{align*}
2^{m+n}=(1+1)^{m+n}=\sum_{j\leq m+n}\frac{(m+n)!}{j!(m+n-j)!}\geq \frac{(m+n)!}{n!m!}.
\end{align*}

Next, we prove \eqref{framn}.
The first inequality of \eqref{framn} follows directly from the first inequality of \eqref{jiechenmn}. For the second one, since $l\geq n$, we derive
 \begin{align*}
\frac{(m+n)!}{(m+l)!} =\frac{1}{\prod_{j=1}^{l-n} (m+n+j)},\quad \frac{m!}{(m+l-n)!}= \frac{1}{\prod_{j=1}^{l-n} (m+j)}.
\end{align*}
Both products contain $l-n$ factors. Moreover one has $m+n+j \geq m+j$ for each $j = 1,\dots, l-n$.
Hence
 \begin{align*}
\frac{1}{\prod_{j=1}^{l-n} (m+n+j)} \le \frac{1}{\prod_{j=1}^{l-n} (m+j)}.
\end{align*}
This completes the proof of \eqref{framn}.

Then, we prove \eqref{framn2}.
Since $l\ge n$, we derive
 \begin{align*}
\frac{(m+l)!}{(m+n)!}= \prod_{j=1}^{l-n} (m+n+j),\quad \frac{(m+a+l)!}{(m+a+n)!}= \prod_{j=1}^{l-n} (m+a+n+j).
\end{align*}
Both products contain $l-n$ factors. Each factor satisfies
 \begin{align*}
m+n+j\le m+a+n+j, \ \text{ for } j=1,\dots ,l-n.
\end{align*}
Consequently, we obtain the desired inequality.

Last, we give the proof of \eqref{MN2}.
Observe that 
 \begin{align*}
[(m+2)/2]!\leq \frac{m+2}{2}[m/2]!.
\end{align*}
Using the above inequality and the first inequality in \eqref{framn} we obtain
 \begin{align*}
\frac{[(m+2)/2]!C_0^{(m+1)/2}}{m!} \leq \frac{m+2}{2}\frac{[m/2]!C_0^{(m+1)/2}}{m!}\leq\frac{m+2}{2}\frac{C_0^{(m+1)/2}}{(m/2)!}\leq C.
\end{align*}
Here the last inequality holds because $(m/2)!$ grows faster than any polynomial and exponential function of $m$.
 This establishes \eqref{MN2}. Thus,  we complete the proof.
\end{proof}

Next, we establish some important estimates concerning the summations.
\begin{lemma}
It holds that
\begin{align}
 & \sum_{\beta\leq \alpha} \frac{(|\alpha|+2)^2}{(|\alpha-\beta|+2)^2(|\beta|+2)^2} \leq C, \label{able}\\
 & \sum_{\g\leq \b }\frac{(|\b|+2)^2}{(|\g|+2)^2}\bigg(\frac{1}{2}\bigg)^{|\b-\g|} \leq C, \label{abdiv2}\\
 & \sum_{\beta\leq \alpha} \frac{(|\alpha|+2)^2}{(|\beta|+2)^2} \frac{|C_0|^{\alpha-\beta}}{|\alpha-\beta|!} \leq C,\label{inmati}
 \\ & \sum_{\tau\leq \delta}\frac{(8\rho )^{|\tau|}}{|\tau|!}(|\tau|+2)^2 \leq C.\label{sum_tau_poly}
\end{align} 
\end{lemma}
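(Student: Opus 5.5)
The four bounds are elementary summation estimates over multi-indices in $\mathbb{Z}_+^2$. The plan is to reduce each sum to a one-dimensional sum over $j=|\beta|$ (or $|\gamma|$, $|\tau|$), using the fact that the number of $\beta\le\alpha$ with $|\beta|=j$ is at most $j+1$. In every case the summand depends on $\beta$ only through $|\beta|$ and $|\alpha|-|\beta|$ (since $|\alpha-\beta|=|\alpha|-|\beta|$ for $\beta\le\alpha$). This also holds for \eqref{inmati}, provided we read $|C_0|^{\alpha-\beta}$ as $|C_0|^{|\alpha-\beta|}$.

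For \eqref{able}, write $n=|\alpha|$ and $j=|\beta|$. I will use the splitting
\[
(n+2)\le (n-j+2)+(j+2),
\]
so that $(n+2)^2\le 2\big[(n-j+2)^2+(j+2)^2\big]$. The sum is then bounded by
\[
2\sum_{\beta\le\alpha}\Big[\frac{1}{(|\beta|+2)^2}+\frac{1}{(|\alpha-\beta|+2)^2}\Big].
\]
Counting multi-indices, this is at most
\[
4\sum_{j\ge0}\frac{j+1}{(j+2)^2},
\]
which diverges logarithmically. This counting step is the main obstacle, because the $j+1$ multiplicity spoils convergence. I would therefore not count crudely but factor the sum coordinatewise. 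For $\alpha=(\alpha_1,\alpha_2)$, I use
\[
(|\beta|+2)^2\ge (\beta_1+1)^{2}(\beta_2+1)^{2}\cdot c
\]
only where that is legitimate. More robustly, I use $(|\beta|+2)^2\ge(\beta_1+2)(\beta_2+2)$ together with the analogous bound for $\alpha-\beta$. I also split $(|\alpha|+2)^2$ according to whether $|\beta|\ge|\alpha|/2$ or $|\alpha-\beta|\ge|\alpha|/2$. In the first region the summand is at most
\[
\frac{4}{(|\alpha-\beta|+2)^2}\le\frac{4}{(\alpha_1-\beta_1+2)(\alpha_2-\beta_2+2)},
\]
and here I must still gain summability. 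Since this product bound only gives a harmonic-type sum, I will sharpen it to
\[
(|\alpha-\beta|+2)^2\ge (\alpha_1-\beta_1+1)^{3/2}(\alpha_2-\beta_2+1)^{1/2}
\]
plus its symmetric counterpart. Failing that, I would fall back to restricting to the dimension actually used. The honest expectation is that in 2D one needs exponents summing to more than 2, and $(|\cdot|+2)^2$ gives exactly 2. Consequently \eqref{able} is borderline, and I would verify it carefully via the bound $\sum_{j\le n}\frac{(j+1)(n+2)^2}{(j+2)^2(n-j+2)^2}$, which I will show stays bounded as $n\to\infty$. Near $j\approx n/2$ the terms are $O(1/n)$ with $n$ of them, giving $O(1)$; near the endpoints $j$ small, the terms are $\sim (j+1)/(j+2)^2\cdot O(1)$, which sums only logarithmically. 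So the region $j\le\sqrt n$ must be examined, and I expect that the crude bound $(n+2)^2/(n-j+2)^2\le 4$ there will not suffice. If the log truly appears, I would conclude that $C$ should be allowed to be absorbed elsewhere, or reinterpret with the $(m+2)^2$ weights being one-dimensional.

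The remaining three estimates are easier because they each carry a geometric or factorial factor. For \eqref{abdiv2}, set $d=|\beta-\gamma|$. Then
\[
\frac{(|\beta|+2)^2}{(|\gamma|+2)^2}\le (d+1)^2,
\]
and the number of $\gamma$ with a given $d$ is at most $d+1$. The sum is therefore bounded by $\sum_{d}(d+1)^3 2^{-d}<\infty$.

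For \eqref{inmati}, the same argument applies with $d=|\alpha-\beta|$ and the factor $C_0^{d}/d!$, giving the bound $\sum_d (d+1)^3C_0^d/d!=C$.

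For \eqref{sum_tau_poly}, with $j=|\tau|$ and the count $(j+1)$, the sum is bounded by $\sum_j (j+1)(j+2)^2(8\rho)^j/j!<\infty$. This bound is uniform in $\delta$ and holds for $\rho\le1$.
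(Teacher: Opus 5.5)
\emph{Where the gap is.} Your arguments for \eqref{abdiv2}, \eqref{inmati} and \eqref{sum\_tau\_poly} are correct. Group by $d=|\beta-\gamma|$ (respectively $d=|\alpha-\beta|$, $j=|\tau|$). Bound the polynomial factor by $(d+1)^2$, and the number of multi-indices in each level set by $d+1$. This gives the convergent series $\sum_d (d+1)^3 2^{-d}$, $\sum_d (d+1)^3|C_0|^d/d!$ and $\sum_j (j+1)(j+2)^2(8\rho)^j/j!$. That is the paper's mechanism: geometric or factorial decay beats the polynomial ratio. Your version is more careful than the paper's, which for \eqref{abdiv2} and \eqref{inmati} sums over the values of $|\gamma|$, resp.\ $|\beta|$, without the multiplicity factor; this is harmless there because of the exponential decay.

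The genuine gap is \eqref{able}: your proposal does not prove it. You announce that $\sum_{j\le n}\frac{(j+1)(n+2)^2}{(j+2)^2(n-j+2)^2}$ ``stays bounded'', and then correctly find that its terms with small $j$ behave like $1/j$. The coordinatewise bounds you try, $(|\beta|+2)^2\ge(\beta_1+2)(\beta_2+2)$ or the $3/2$--$1/2$ split, only produce products of divergent harmonic-type sums. The closing hedge is not an argument.

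\emph{Why it cannot be closed.} Your heuristic is right, and \eqref{able} is false for $\alpha,\beta\in\mathbb{Z}_+^2$. Take $\alpha=(n,n)$. For $0\le j\le n$ there are exactly $j+1$ multi-indices $\beta\le\alpha$ with $|\beta|=j$, and always $|\alpha-\beta|+2\le|\alpha|+2$. Hence
\[
\sum_{\beta\le\alpha}\frac{(|\alpha|+2)^2}{(|\alpha-\beta|+2)^2(|\beta|+2)^2}\;\ge\;\sum_{j=0}^{n}\frac{j+1}{(j+2)^2}\;\ge\;\frac12\sum_{j=0}^{n}\frac{1}{j+2}\;\longrightarrow\;\infty\quad(n\to\infty).
\]
In general the two-dimensional sum is only bounded by $C\log(|\alpha|+2)$, and this is sharp for $\alpha=(n,n)$.

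The paper's proof splits at $|\beta|=|\alpha|/2$, bounds the ratio by a constant on each part, and then writes $\sum_{|\beta|=0}^{[|\alpha|/2]}(|\beta|+2)^{-2}$. That is, it drops the multiplicity $|\beta|+1$ of the level sets, which is legitimate only for scalar indices.

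\emph{What to do instead.} Your hedge about one-dimensional weights is half right. State \eqref{able} for $\alpha,\beta\in\mathbb{Z}_+$ and prove it by that splitting; each half is at most $4\sum_{j\ge0}(j+2)^{-2}$. The scalar version is all that is needed for \eqref{sumlrhom0} and for the $j$-summation in Step 3 of the proof of Proposition~\ref{prop:inftyxvr2}. Record separately that the two-dimensional version fails.

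Be aware that the paper also invokes \eqref{able} for the $\delta$-summation over $\delta\le\gamma\in\mathbb{Z}_+^2$ in that same Step 3. There a different device is required. For example, a polynomial weight $(|\alpha|+2)^{s}$ with $s>2$ works, since the same splitting then converges because $\sum_j (j+1)(j+2)^{-s}<\infty$.
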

\begin{proof}
We start with the first inequality \eqref{able}.
\begin{align*}
\sum_{\beta\leq \alpha} \frac{(|\alpha|+2)^2}{(|\alpha-\beta|+2)^2(|\beta|+2)^2} \leq C\sum_{|\b|=0}^{[ |\a|/2]} \frac{1}{(|\beta|+2)^2} +C\sum_{|\b|=[ |\a|/2]+1}^{[ |\a|]} \frac{1}{(|\alpha-\beta|+2)^2} \leq C.
\end{align*}

Next, we prove the second inequality \eqref{abdiv2}. We compute 
 \begin{align*}
\sum_{\g\leq \b }\frac{(|\b|+2)^2}{(|\g|+2)^2}\bigg(\frac{1}{2}\bigg)^{|\b-\g|} \leq C
\sum_{|\g|=0}^{[ |\b|/2]}(|\b|+2)^2\bigg(\frac{1}{2}\bigg)^{| \b-\g|}+C\sum_{|\g|=[ |\b|/2]+1}^{[ |\b|]}\bigg(\frac{1}{2}\bigg)^{| \b-\g|}\leq C.
\end{align*}
In the last inequality, we have used 
\begin{equation*}
 \sum_{|\g|=0}^{[ |\b|/2]}(|\b|+2)^2\bigg(\frac{1}{2}\bigg)^{| \b-\g|} \leq \sum_{|\g|=0}^{ [|\b|/2]}(|\b|+2)^2\bigg(\frac{1}{2}\bigg)^{| \b|/2} \leq (|\b|+2)^3\bigg(\frac{1}{2}\bigg)^{| \b|/2}\leq C.
\end{equation*}

Then, we prove the third inequality \eqref{inmati}. We compute 
\begin{align*}
&\sum_{\b\leq \a}\frac{(|\a|+2)^2}{(|\b|+2)^2}\frac{|C_0|^{|\a-\b|}}{|\a-\b|!} 
\leq
\sum_{|\b|=0}^{[ |\a|/2]}(|\a|+2)^2\frac{|C_0|^{|\a-\b|}}{|\a-\b|!} +C\sum_{|\b|=[ |\a|/2]+1}^{[ |\a|]}\frac{|C_0|^{|\a-\b|}}{|\a-\b|!} 
 \leq C.
\end{align*}
In the last inequality, we have used 
\begin{multline*}
 \sum_{|\b|=0}^{[ |\a|/2]}(|\a|+2)^2\frac{|C_0|^{|\a-\b|}}{|\a-\b|!} \leq \sum_{|\b|=0}^{[ |\a|/2]}\frac{(|\a|+2)^2}{(|\a-\b|!)^{1/2}}\frac{|C_0|^{|\a-\b|}}{(|\a-\b|!)^{1/2}} \\\leq \sum_{|\b|=0}^{[ |\a|/2]}\frac{(|\a|+2)^2}{[(|\a|/2)!]^{1/2}}\frac{|C_0|^{|\a-\b|}}{(|\a-\b|!)^{1/2}} \leq C \sum_{|\b|=0}^{[ |\a|/2]}\frac{|C_0|^{|\a-\b|}}{(|\a-\b|!)^{1/2}} \leq C.
\end{multline*}

Last, we prove \eqref{sum_tau_poly}. We compute
 \begin{align*}
 \sum_{\tau\leq \delta}\frac{(8\rho )^{|\tau|}}{|\tau|!}(|\tau|+2)^2 \leq\sum_{\tau\leq \delta}\frac{1}{(|\tau|!)^{1/2}}\leq C.
 \end{align*}
The first inequality holds because
 $(|\tau|!)^{1/2}$ grows faster than any polynomial and exponential function of $\tau$.
Thus,  we complete the proof.
\end{proof}

\begin{lemma}
 Recall $L_{\rho, m, |\a|}$  is defined in \eqref{r2dalp}.  It holds that
\begin{align}
\sum_{j=0}^{m}\frac{m!}{j!(m-j)!}\frac{L_{\rho,m,0}}{L_{\rho,j,0}L_{\rho,m-j,0}}\leq C.\label{sumlrhom0}
\end{align}
\end{lemma}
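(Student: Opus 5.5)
Since $\alpha=0$, the ratio is explicit. From \eqref{r2dalp}, $L_{\rho,m,0}=(16/\nu_0)^m\rho^{m+1}(m+2)^2\cdot 4/m!$. The factors $(16/\nu_0)^m$ cancel exactly, because $(16/\nu_0)^m/\big((16/\nu_0)^j(16/\nu_0)^{m-j}\big)=1$. The powers of $\rho$ leave $\rho^{m+1}/(\rho^{j+1}\rho^{m-j+1})=\rho^{-1}$, which is independent of $j$ and $m$. The factorials give $\frac{1/m!}{(1/j!)(1/(m-j)!)}=\frac{j!(m-j)!}{m!}$, and this cancels exactly against the binomial coefficient $\frac{m!}{j!(m-j)!}$. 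The constant $2^2=4$ from $(|\alpha|+2)^2$ leaves a factor $4/16=1/4$. Hence the summand equals
\begin{equation*}
\frac{1}{4\rho}\,\frac{(m+2)^2}{(j+2)^2(m-j+2)^2}.
\end{equation*}

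The proof therefore reduces to the one-dimensional analogue of \eqref{able}, namely
\begin{equation*}
\sum_{j=0}^m\frac{(m+2)^2}{(j+2)^2(m-j+2)^2}\le C.
\end{equation*}
We split the sum at $j=[m/2]$. For $j\le [m/2]$ we have $m-j+2\ge (m+2)/2$, so the summand is at most $4/(j+2)^2$. For $j>[m/2]$ we have $j+2\ge (m+2)/2$, so the summand is at most $4/(m-j+2)^2$. Each piece is therefore bounded by $4\sum_{n\ge0}(n+2)^{-2}<\infty$. This is exactly the argument already used for \eqref{able}, specialized to scalar indices; one may alternatively invoke \eqref{able} directly with $\alpha=(m,0)$.

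The only point needing care is the factor $1/\rho$. It is harmless here, since $\rho>0$ is fixed, so $C$ depends on $\rho$ (and the bound stays uniform for $\rho$ bounded below). If a $\rho$-independent constant were required, one would track this $\rho^{-1}$ and absorb it elsewhere in the nonlinear estimates, where $L_{\rho,j,0}L_{\rho,m-j,0}$ multiplies two norms. Apart from this, there is no real obstacle: the proof is exact cancellation followed by a dyadic splitting.
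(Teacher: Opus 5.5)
Your proof is correct and follows the paper's argument. You use the same exact cancellation, reducing the summand to a multiple of $\rho^{-1}\frac{(m+2)^2}{(j+2)^2(m-j+2)^2}$, and then the same summation bound \eqref{able}; your tracking of the extra factor $1/4$, coming from $(|\alpha|+2)^2=4$, is a slightly sharper version of the paper's "$\leq$". The dependence of $C$ on $\rho^{-1}$ that you note is also present in the paper's proof.
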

\begin{proof}
 For any $0\leq j\leq m$, we compute
 \begin{align}\label{mjbion}
&\frac{m!}{j!(m-j)!}\frac{L_{\rho,m,0}}{L_{\rho,j,0}L_{\rho,m-j,0}}\notag\\ 
 &=\frac{m!}{j!(m-j)!}\frac{(16/\nu_0)^m\rho^{m+1}(m+2)^2}{m!}\frac{j!}{(16/\nu_0)^j\rho^{j+1}(j+2)^2}\frac{(m-j)!}{(16/\nu_0)^{m-j}\rho^{m-j+1}(m-j+2)^2}\notag\\ 
 &\leq\rho^{-1}\frac{(m+2)^2}{(j+2)^2(m-j+2)^2}.
 \end{align}
  Taking summation in $j$, we use \eqref{able} with replacing $|\a|$ and $|\b|$ by $m$ and $j$ respectively to obtain 
 \begin{align*}
\sum_{j=0}^{m}\eqref{mjbion}\leq\rho^{-1}\sum_{j=0}^{m}\frac{(m+2)^2}{(j+2)^2(m-j+2)^2} \leq C.
 \end{align*}
  Therefore, this completes the proof.
\end{proof}

\subsection{Basic properties of the linear operator $\mathcal{L}$ and $K$}

For $\mathcal{L}$, we have the following three lemmas.

\begin{lemma}[\cite{R}]\label{lemma:basic_K}
It holds that $\mathcal{L}=\nu(v)-K$, where 
\begin{equation*}
\nu(v)=\int_{\mathbb{R}^3}\int_{\mathbb{S}^2}|(v-u)\cdot \omega|\mu(u)\,\d\omega\d u,
\end{equation*} 
and
\begin{equation*}
Kf(v)=\int_{\mathbb{R}^3}\int_{\mathbb{S}^2}B(v-u,\omega)[\sqrt{\mu(v)\mu(u)}f(u)-\sqrt{\mu(u)\mu(u')}f(v')-\sqrt{\mu(u)\mu(v')}f(u')]\,\d\omega\d u.
\end{equation*}
Here, the collision frequency $\nu(v)$ satisfies
\begin{equation*}
\nu(v) \geq \nu_0 \sqrt{|v|^2+1} \geq \nu_0
\end{equation*}
for a positive constant $\nu_0>0$. The integral operator $K$ is given by
\[
Kf(x,v)=\int_{\mathbb{R}^3}\mathbf{k}(v,u)f(x,u)\,\dd u,
\]
with the integral kernel $\mathbf{k}(v,u)$ satisfying
\Be\notag%\label{k_varrho}
 |\mathbf{k} (v,u)| \lesssim \mathbf{k}_\varrho (v,u), \ \ \mathbf{k}_\varrho (v,u) := e^{- \varrho |v-u|^2}/ |v-u|,
\Ee
for a constant $\varrho>0$. %Let $0<\lambda<1$ be a constant such that $\lambda<\nu_0$.
\end{lemma}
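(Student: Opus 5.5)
The plan is to follow the classical Grad decomposition of the linearized operator. Expanding $\mathcal{L}f=-\mu^{-1/2}[Q(\mu,\sqrt{\mu}f)+Q(\sqrt{\mu}f,\mu)]$ and splitting each $Q$ into its gain and loss parts, the loss term of $Q(\sqrt{\mu}f,\mu)$ gives exactly $f(v)\int\!\int|(v-u)\cdot\omega|\mu(u)\,\d\omega\,\d u=\nu(v)f(v)$. The remaining three pieces are the loss term of $Q(\mu,\sqrt\mu f)$ and the two gain terms. After multiplying by $\mu^{-1/2}(v)$ and using the energy identity $\mu(u')\mu(v')=\mu(u)\mu(v)$, i.e. $\mu^{-1/2}(v)\mu(v')=\sqrt{\mu(u)\mu(v')}/\sqrt{\mu(u')}$ and similar rearrangements, they reproduce the three terms in the stated formula for $K$, with the sign convention $\mathcal L=\nu-K$. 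This part is pure algebra.

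For the lower bound on $\nu$, I would write $\nu(v)=\pi\int|v-u|\mu(u)\,\d u$ after integrating over $\omega\in\mathbb S^2$, since $\int_{\mathbb S^2}|z\cdot\omega|\,\d\omega=2\pi|z|$ up to a constant. Then $\nu$ is continuous and strictly positive, and $\nu(v)\ge c|v|$ for large $|v|$ by the triangle inequality, because $\int|v-u|\mu\ge |v|-\int|u|\mu$. Together these give $\nu(v)\ge\nu_0\sqrt{1+|v|^2}$.

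The substantive step is the kernel bound for $K$. The first term $\int|(v-u)\cdot\omega|\sqrt{\mu(v)\mu(u)}f(u)$ has kernel $\lesssim |v-u|e^{-(|v|^2+|u|^2)/4}$, which is trivially bounded by $\mathbf k_\varrho$. The two gain terms are symmetric under exchanging $u'$ and $v'$ (via $\omega\mapsto\omega^\perp$ parametrization), so it suffices to treat one of them. I would apply Carleman's change of variables: fix $v$, set $z=v'$ as the new integration variable, and parametrize $u$ in the plane through $v$ orthogonal to $v-z$. This gives a kernel of the form
\begin{equation*}
\frac{C}{|v-z|}\int_{w\perp (v-z)} \exp\Big(-\tfrac{|v+w|^2}{4}-\tfrac{|z+w|^2}{4}\Big)\,\d w .
\end{equation*}
Computing this Gaussian integral over the plane yields Grad's explicit bound
\begin{equation*}
\frac{C}{|v-z|}\exp\Big(-\tfrac{|v-z|^2}{8}-\tfrac{(|v|^2-|z|^2)^2}{8|v-z|^2}\Big).
\end{equation*}
Dropping the second exponent gives $\mathbf k_\varrho$ with $\varrho=1/8$. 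The main obstacle is carrying out Carleman's change of variables with the correct Jacobian $1/|v-z|$, since this is exactly what produces the singularity $1/|v-u|$. Since this is the standard result in \cite{R}, I would cite it for the computation rather than redo it.
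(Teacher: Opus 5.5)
Your proposal is correct and follows the same route as the paper. The paper gives no argument of its own beyond citing \cite{R} for Grad's computation, and in the proof of Lemma~\ref{estdivk1ta} it uses exactly the two kernels you obtain, $|v-u|e^{-\frac14(|v|^2+|u|^2)}$ and $|v-u|^{-1}e^{-\frac18|v-u|^2-\frac18\frac{(|v|^2-|u|^2)^2}{|v-u|^2}}$.

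Only some bookkeeping needs fixing, none of it affecting the bound $|\mathbf{k}|\lesssim\mathbf{k}_\varrho$:
\begin{itemize}
\item In the paper's convention the first argument of $Q$ is evaluated at $u$. So $\nu f$ comes from the loss part of $Q(\mu,\sqrt{\mu}f)$, not of $Q(\sqrt{\mu}f,\mu)$.
\item Carrying out your algebra with $\mathcal{L}=\nu-K$ actually gives the \emph{negative} of the displayed formula for $K$. This is a sign slip in the statement, so you cannot claim the algebra reproduces it as written.
\item Under $z=v'$, it is $u'=v+w$, not $u$, that lies in the plane through $v$ orthogonal to $v-z$; correspondingly $u=z+w$. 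Your integrand already encodes this correctly.
\end{itemize}
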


Note that $\nu(v)$ serves as a damping term along the characteristic.  When the exponential $e^{-\nu(v)t}$ meets the polynomial $t^i$, we have the following estimate.

\begin{lemma}\label{lemma:enut_control}
 \begin{align}
 & e^{-\nu(v) t/8}t^{i} \leq \bigg(\frac{8}{\nu_0 }\bigg)^{i}i!, \ \ \forall i\in \mathbb{Z}_+.\label{derienut}
 % \\ & e^{- \nu(v)t/4} (1+t)^{\sigma/2} \lesssim e^{- \nu_0t/4} (1+t)^{\sigma/2} \lesssim 1,\label{tenut1t}
 \end{align}
\end{lemma}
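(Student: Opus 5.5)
The plan is to reduce \eqref{derienut} to the lower bound $\nu(v)\geq \nu_0$ from Lemma \ref{lemma:basic_K}, followed by an elementary one-variable maximization. Since $t\geq 0$ and $\nu(v)\geq \nu_0$, we have $e^{-\nu(v)t/8}\leq e^{-\nu_0 t/8}$ for every $v$. It therefore suffices to prove
\begin{equation*}
e^{-\nu_0 t/8}\, t^{i} \leq \Big(\frac{8}{\nu_0}\Big)^{i} i!, \qquad t\geq 0,\ i\in\mathbb{Z}_+ .
\end{equation*}

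Set $s=\nu_0 t/8\geq 0$, so that $t^i=(8/\nu_0)^i s^i$. The claim then becomes $s^i e^{-s}\leq i!$ for all $s\geq 0$. This follows at once from the Taylor series of the exponential: all terms of $e^{s}=\sum_{n\geq 0} s^n/n!$ are nonnegative, so $e^{s}\geq s^i/i!$, which rearranges to $s^i\leq i!\,e^{s}$.

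For $i=0$ the statement reads $e^{-\nu(v)t/8}\leq 1$, which is trivially true (with the convention $t^0=1$). Alternatively, one can maximize $s^ie^{-s}$ directly: the maximum is attained at $s=i$ and equals $i^ie^{-i}$, and $i^ie^{-i}\leq i!$ by Stirling's inequality. The Taylor-series argument is shorter, and I would use that one.

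There is no real obstacle here. The only point needing care is to use the uniform lower bound $\nu(v)\geq\nu_0$, so that the constant is independent of $v$. This uniformity is exactly what later allows the factor $(8/\nu_0)^{|\alpha|}$ to be absorbed by the $(16/\nu_0)^m$ weight in \eqref{r2dalp}.
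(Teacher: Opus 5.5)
Your proof is correct and follows the paper's argument: the paper likewise uses $\nu(v)\geq\nu_0$, writes $t^i=(\nu_0 t/8)^i(8/\nu_0)^i$, and bounds $s^i e^{-s}\leq i!$ with $s=\nu_0 t/8$. Your Taylor-series justification $e^{s}\geq s^i/i!$ just makes explicit the final step that the paper leaves implicit.
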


\begin{proof}
The lemma follows from straightforward calculation:
\begin{align*}
 & e^{-\nu(v) t/8}t^{i} \leq e^{-\nu_0t/8} \bigg(\frac{\nu_0 t}{8}\bigg)^{i} \bigg(\frac{8}{\nu_0 }\bigg)^{i} \leq \bigg(\frac{8}{\nu_0 }\bigg)^{i}i!, \ \ \forall i\in \mathbb{Z}_+.
 % \\ & e^{- \nu(v)t/4} (1+t)^{\sigma/2} \lesssim e^{- \nu_0t/4} (1+t)^{\sigma/2} \lesssim 1,\label{tenut1t}
 \end{align*}
\end{proof}

\begin{lemma}[\cite{R}]%\label{lemma:k_theta}
Let $0\leq \theta < \frac{1}{4}$, and $\mathbf{k}_\theta(v,u) := \mathbf{k}(v,u) \frac{e^{\theta |v|^2}}{e^{\theta |u|^2}}$, then there exists $C_\theta > 0$ such that
\begin{equation}\label{k_theta}
\int_{\mathbb{R}^3} \mathbf{k}(v,u) \frac{e^{\theta |v|^2}}{e^{\theta |u|^2}} \,\dd u \leq \frac{C_\theta}{1+|v|}.
\end{equation}
Moreover, for $N\gg 1$, we have
\begin{equation*}
\mathbf{k}_\theta(v,u) \mathbf{1}_{|v-u|> \frac{1}{N}} \leq C_N,
\end{equation*}
and
\begin{equation*}
\int_{|u|>N \text{ or } |v-u|\leq \frac{1}{N}} \mathbf{k}_\theta(v,u) \,\dd u \lesssim \frac{1}{N} \leq o(1).
\end{equation*}
\end{lemma}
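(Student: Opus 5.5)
The plan is to prove all three claims directly from Grad's explicit representation of the kernel. For hard spheres, $\mathbf{k}=\mathbf{k}_2-\mathbf{k}_1$ with
\begin{equation*}
|\mathbf{k}_1(v,u)|\lesssim |v-u|\,e^{-\frac{|v|^2+|u|^2}{4}},\qquad
|\mathbf{k}_2(v,u)|\lesssim \frac{1}{|v-u|}\exp\Big(-\frac{|v-u|^2}{8}-\frac{(|v|^2-|u|^2)^2}{8|v-u|^2}\Big).
\end{equation*}
The $\mathbf{k}_1$ part is harmless. Since $\theta<\frac14$, we have $e^{\theta|v|^2-\frac{|v|^2}{4}}\lesssim (1+|v|)^{-1}$. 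The factor $e^{-(\frac14+\theta)|u|^2}|v-u|$ is integrable in $u$ with bound $\lesssim 1+|v|$, but it is multiplied by an extra Gaussian in $|v|$ that swallows this. So $\int\mathbf{k}_1 e^{\theta|v|^2-\theta|u|^2}\dd u\lesssim (1+|v|)^{-1}$, and $\mathbf{k}_1$ also satisfies the other two claims trivially.

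The main step, and the one I expect to be the real obstacle, is bounding the exponent of $\mathbf{k}_2$ multiplied by the weight ratio. Set $\eta=u-v$, so that $|u|^2-|v|^2=|\eta|^2+2v\cdot\eta$, and put $s:=(|\eta|^2+2v\cdot\eta)/|\eta|$. The total exponent is then
\begin{equation*}
-\frac{|\eta|^2}{8}-\frac{s^2}{8}-\theta s|\eta| .
\end{equation*}
First I would try completing the square in $s$, keeping a fraction of $s^2$:
\begin{equation*}
-\frac{s^2}{8}-\theta s|\eta|\le -\frac{(1-\lambda)s^2}{8}+\frac{2\theta^2}{\lambda}|\eta|^2 .
\end{equation*}
One then picks $\lambda\in(0,1)$ with $\frac{2\theta^2}{\lambda}<\frac18$; this is possible precisely because $\theta<\frac14$. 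The result is
\begin{equation*}
\text{exponent}\le -q|\eta|^2-q s^2,\qquad s=|\eta|+2|v|\cos\phi,
\end{equation*}
where $q>0$ and $\phi$ is the angle between $\eta$ and $v$.

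Integrating in spherical coordinates with axis $v$ gives
\begin{equation*}
\int \frac{e^{-q r^2}}{r}\,r^2\int_{-1}^{1}e^{-q(r+2|v|z)^2}\,\dd z\,\dd r .
\end{equation*}
The inner integral is $\lesssim\min\{1,(|v|)^{-1}\}$ after the substitution $y=2|v|z$. The radial integral converges, so the total is $\lesssim (1+|v|)^{-1}$, which is \eqref{k_theta}.

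For the two refined statements I use the same Gaussian bound $\mathbf{k}_\theta\lesssim |\eta|^{-1}e^{-q|\eta|^2}$, obtained by dropping the $s^2$ term. On $\{|v-u|>1/N\}$ this gives $\mathbf{k}_\theta\le C N=:C_N$. For the set $\{|v-u|\le 1/N\}$, the integral $\int_{|\eta|\le 1/N}|\eta|^{-1}\dd\eta\lesssim N^{-2}$. For the set $\{|u|>N\}$ I split into two cases:
\begin{itemize}
\item If $|v|\ge N/2$, the full bound \eqref{k_theta} already gives $\lesssim (1+|v|)^{-1}\lesssim 1/N$.
\item If $|v|<N/2$, then $|\eta|\ge N/2$, and the factor $e^{-q|\eta|^2}$ makes the integral $\lesssim e^{-qN^2/8}\ll 1/N$.
\end{itemize}
Combining these yields the $\lesssim 1/N$ bound and completes the proof.
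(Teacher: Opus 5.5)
Your proposal is correct. It is essentially the standard proof behind the cited result: Grad's splitting $\mathbf{k}=\mathbf{k}_2-\mathbf{k}_1$, the substitution $\eta=u-v$, negative definiteness of the weighted quadratic exponent exactly when $\theta<\frac14$ (your Young-inequality step replaces the usual discriminant check), and polar coordinates with axis $v$ giving the factor $\min\{1,|v|^{-1}\}$. The paper itself gives no proof and only cites this lemma, and your derivation of the two refined bounds from $\mathbf{k}_\theta\lesssim|\eta|^{-1}e^{-q|\eta|^2}$, with the case split on $|v|\gtrless N/2$, is sound.
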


\subsection{Basic properties of the nonlinear operator $\Gamma$}
 
The estimate for the nonlinear operator is given by the following lemma.

\begin{lemma}[\cite{chenduanzhang2024}]%\label{lemma:gamma_est}
For $1\leq p\leq \infty$, we have the following estimates to the nonlinear operator $\hat{\Gamma}(\hat{f},\hat{g})$:
\begin{align}
 & \Big| \int_{\mathbb{R}^3} \hat{\Gamma}(\hat{f},\hat{g}) \bar{\hat{h}}(k) \dd v \Big| \lesssim \int_{\mathbb{R}^2} \Vert \hat{f}(k-\ell)\Vert_{L^2_v} \Vert \hat{g}(\ell)\Vert_{L^2_\nu} \Vert (\mathbf{I}-\mathbf{P})\hat{h}\Vert_{L^2_\nu} \dd \ell, 
 \label{gamma_product}
\end{align}
\begin{align}
 & \Big\Vert \Big|\int_0^T \int_{0}^\infty \int_{\mathbb{R}^3} \hat{\Gamma}(\hat{f},\hat{g})\bar{\hat{h}}(k) \dd v \dd x_3 \dd t \Big|^{1/2} \Big\Vert_{L^p_k} \notag \\
 &\lesssim o(1) \Vert (\mathbf{I}-\mathbf{P}) \hat{h}\Vert_{L^p_k L^2_{T,x_3,\nu}} + \Vert \hat{f} \Vert_{L^p_k L^\infty_T L^2_{x_3,v}} \Vert \hat{g}\Vert_{L^1_k L^2_T L^\infty_{x_3} L^2_\nu} \notag \\
 &\lesssim o(1) \Vert (\mathbf{I}-\mathbf{P}) \hat{h}\Vert_{L^p_k L^2_{T,x_3,\nu}} + \Vert \hat{f} \Vert_{L^p_k L^\infty_T L^2_{x_3,v}} \Vert w\hat{g}\Vert_{L^1_k L^2_T L^\infty_{x_3,v}}, \notag
\end{align}
\begin{align}
 & \Big\Vert \Big|\int_0^T \int_{0}^\infty \int_{\mathbb{R}^3} (1+t)^\sigma \hat{\Gamma}(\hat{f},\hat{g})\bar{\hat{h}}(k) \dd v \dd x_3 \dd t \Big|^{1/2} \Big\Vert_{L^1_k}\notag \\
 &\lesssim o(1) \Vert (1+t)^{\sigma/2}(\mathbf{I}-\mathbf{P}) \hat{h}\Vert_{L^1_k L^2_{T,x_3,\nu}} + \Vert (1+t)^{\sigma/2} \hat{f}\Vert_{L^1_k L^\infty_T L^2_{x_3,v}} \Vert w\hat{g}\Vert_{L^1_k L^2_T L^\infty_{x_3,v}}, 
\notag
\end{align}
and
\begin{align} 
 \Vert \nu^{-1}(1+t)^{\sigma/2} w\hat{\Gamma}(\hat{f},\hat{g})\Vert_{L^\infty_{T,x_3,v}} \lesssim \Vert (1+t)^{\sigma/2} w\hat{f}\Vert_{L^\infty_{T,x_3,v}}*_k\Vert (1+t)^{\sigma/2} w\hat{g}\Vert_{L^\infty_{T,x_3,v}}, \notag
\end{align}
\begin{align}
 \Vert \nu^{-1}(1+t)^{\sigma/2}w\hat{\Gamma}(\hat{f},\hat{g}) \Vert_{L^1_k L^\infty_{T,x_3,v}} \lesssim \Vert (1+t)^{\sigma/2} w \hat{f}\Vert_{L^1_k L^\infty_{T,x_3,v}} \Vert (1+t)^{\sigma/2} w \hat{g}\Vert_{L^1_k L^\infty_{T,x_3,v}}, \notag
\end{align}
where all estimates are independent of $T>0$.
\end{lemma}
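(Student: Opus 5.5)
The plan is to treat $\hat\Gamma$ as the classical hard-sphere bilinear form applied at each pair of frequencies $(k-\ell,\ell)$, and then integrate over $\ell$. Since the convolution in $k$ commutes with the $v$-integrals, $\hat{\Gamma}(\hat f,\hat g)(k)=\int_{\R^2}\Gamma(\hat f(k-\ell),\hat g(\ell))\,\dd\ell$, where $\Gamma$ acts only in $v$. The inner product is then handled one $\ell$ at a time, and Minkowski's inequality moves the $\ell$-integral outside all the norms in $t,x_3,v$. A sup or integral in $(t,x_3)$ of an $\ell$-integral is always bounded by the $\ell$-integral of the sup or integral, which is the direction we need.

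\emph{Step 1 (estimate \eqref{gamma_product}).} By collision invariance, $\Gamma(\cdot,\cdot)\perp\ker\mathcal L$ in $L^2_v$, so $\bar{\hat h}$ may be replaced by $(\mathbf I-\mathbf P)\bar{\hat h}$. For each fixed $\ell$ we then apply the standard hard-sphere bound
\[
|\langle\Gamma(f,g),h\rangle_{L^2_v}|\lesssim\|f\|_{L^2_v}\|g\|_{L^2_\nu}\|h\|_{L^2_\nu},
\]
which follows from Cauchy--Schwarz on the gain and loss parts using the factor $\sqrt{\mu(u)}$ and $|(v-u)\cdot\omega|\lesssim \nu(v)^{1/2}\nu(u)^{1/2}$ after the pre/post-collisional change of variables. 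Integrating this bound in $\ell$ gives \eqref{gamma_product}.

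\emph{Step 2 (the $L^p_k$ and weighted $L^1_k$ space-time bounds).} Integrate \eqref{gamma_product} over $x_3\in(0,\infty)$ and $t\in(0,T)$, and apply H\"older with the exponent split
\[
\hat f:\ L^\infty_TL^2_{x_3,v},\qquad \hat g:\ L^2_TL^\infty_{x_3}L^2_\nu,\qquad (\mathbf I-\mathbf P)\hat h:\ L^2_{T,x_3,\nu}.
\]
This bounds the space-time integral by $A(k)\,B(k)$, where $A(k)=\|(\mathbf I-\mathbf P)\hat h(k)\|_{L^2_{T,x_3,\nu}}$ and $B(k)=\int\|\hat f(k-\ell)\|_{L^\infty_TL^2}\|\hat g(\ell)\|_{L^2_TL^\infty_{x_3}L^2_\nu}\,\dd\ell$. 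Taking square roots and using $\sqrt{AB}\le o(1)A+C B$ reduces the problem to bounding $\|B\|_{L^p_k}$. Young's inequality $L^p*L^1\subset L^p$ gives $\|B\|_{L^p_k}\le\|\hat f\|_{L^p_kL^\infty_TL^2_{x_3,v}}\|\hat g\|_{L^1_kL^2_TL^\infty_{x_3}L^2_\nu}$. The second form of the bound follows from $\|g\|_{L^2_\nu}\lesssim\|wg\|_{L^\infty_v}$, since $\nu^{1/2}w^{-1}\in L^2_v$. For the time-weighted $L^1_k$ estimate, split $(1+t)^\sigma=(1+t)^{\sigma/2}\cdot(1+t)^{\sigma/2}$, put one factor on $\hat f$ (taken in $L^\infty_T$) and the other on $(\mathbf I-\mathbf P)\hat h$, and repeat the argument with Young's inequality $L^1*L^1\subset L^1$.

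\emph{Step 3 ($L^\infty$ bounds).} For each fixed $\ell$ we use the pointwise estimate
\[
|w\Gamma(f,g)(v)|\lesssim\nu(v)\,\|wf\|_{L^\infty_v}\|wg\|_{L^\infty_v}.
\]
It rests on energy conservation, which gives $w(v)\le w(v')w(u')$ in the gain term and $w(v)\le w(v)w(u)$ in the loss term. The leftover $\sqrt{\mu(u)}$ makes the $u$-integral of $|(v-u)\cdot\omega|\sqrt{\mu(u)}$ bounded by $\nu(v)$. The time weight is harmless: $(1+t)^{\sigma/2}\le(1+t)^{\sigma/2}(1+t)^{\sigma/2}$, so it can be distributed to both factors inside $\sup_t$. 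This yields the $k$-convolution bound, and integrating in $k$ with $L^1*L^1\subset L^1$ gives the last estimate. None of the constants depends on $T$.

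The only delicate point I anticipate is in Step 2. The time integrability of $\hat g$ must be measured in the mixed norm $L^2_TL^\infty_{x_3}$ inside $L^1_k$, while $\hat f$ carries $L^\infty_T$ inside $L^p_k$, and the order of $\sup_t$, $\int\dd k$, and the convolution has to be kept consistent. The cure is to apply H\"older in $(t,x_3)$ first, for fixed $k$ and $\ell$, and only afterwards apply Minkowski and Young in $(k,\ell)$.
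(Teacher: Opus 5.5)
The paper gives no proof of this lemma; it quotes \cite{chenduanzhang2024}. Your reductions in Steps~2 and~3 are sound: H\"older in $(t,x_3)$ at fixed $(k,\ell)$, then Minkowski and Young in $(k,\ell)$, the splitting $\sqrt{AB}\le o(1)A+CB$, and the pointwise bound via $w(v)\le w(u')w(v')$. The two $L^\infty$ bounds hold for arbitrary $\hat f,\hat g$.

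The genuine gap is the first sentence of Step~1. In this paper $\Gamma(f,g)=\mathcal{T}(f,g,\mu^{1/2})=\mu^{-1/2}Q(\sqrt{\mu}f,\sqrt{\mu}g)$ is \emph{not} symmetric in $(f,g)$. For the non-symmetric hard-sphere operator, the symmetry exchanging $v'$ and $u'$ (i.e.\ $\sigma\mapsto-\sigma$) only gives
\[
\int_{\R^3}Q(F,G)\psi\,\dd v=\frac12\int_{\R^3}\int_{\R^3}\int_{\mathbb{S}^2}|(v-u)\cdot\omega|\,F(u)G(v)\,[\psi(u)-\psi(v)]\,\dd\omega\,\dd u\,\dd v .
\]
This vanishes for $\psi=1$, but not for $\psi=v_i$ or $|v|^2$ unless $F=G$. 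So $\Gamma(\hat f(k-\ell),\hat g(\ell))$ is not orthogonal to $\ker\mathcal{L}$ at fixed $\ell$, and you cannot replace $\hat h$ by $(\mathbf{I}-\mathbf{P})\hat h$ ``one $\ell$ at a time''. In fact, for general $\hat f\neq\hat g$ the first three estimates are false as literally stated. Take $\hat h=v_1\sqrt{\mu}\in\ker\mathcal{L}$, so the right side of \eqref{gamma_product} vanishes. Take $\hat f,\hat g$ to be nonnegative profiles supported in $\{v_1>1\}$ and $\{v_1<-1\}$ respectively. Then the momentum component is strictly positive.

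What is true, and is the only case used in Section~\ref{sec:withoutder}, is $\hat g=\hat f$. Then $\hat\Gamma(\hat f,\hat f)$ is the $x_\parallel$-Fourier transform of $\Gamma(f,f)$, so $\mathbf{P}\hat\Gamma(\hat f,\hat f)(k)=0$. Equivalently, the substitution $\ell\mapsto k-\ell$ together with $u\leftrightarrow v$ cancels the per-$\ell$ momentum and energy components. Only after this should you pass to the per-$\ell$ trilinear bound. Alternatively, work with the symmetrized $\Gamma$.

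Separately, your justification of the trilinear bound does not work as written. The inequality $|(v-u)\cdot\omega|\lesssim\nu(v)^{1/2}\nu(u)^{1/2}$ is false: take $v=0$, $\omega\parallel u$, $|u|\to\infty$. Also, after the involution the Gaussian sits at $u'$, not at the argument $u$ of $f$, so crude kernel bounds leave a velocity weight on $\hat f$, which the statement does not allow. The bound itself is true, and can be proved as follows.

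For the loss part, Cauchy--Schwarz in $u$ with $\int|v-u|^2\mu(u)\,\dd u\lesssim\nu(v)^2$ gives a bound $\lesssim\nu(v)|g(v)|\,\|f\|_{L^2_v}$.

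For the gain part, apply Cauchy--Schwarz keeping $\mu^{1/2}(u)$ in both factors, and use the involution $(v,u)\mapsto(v',u')$ in the factor containing $|f(u')|^2|g(v')|^2$. Then use
\[
\int_{\mathbb{S}^2}|(v-u)\cdot\omega|\,\mu^{1/2}(u')\,\dd\omega=\frac{2}{|v-u|}\int_{S_{u,v}}\mu^{1/2}\,\dd A\lesssim\min\{|v-u|,|v-u|^{-1}\}\le 1,
\]
where $S_{u,v}$ is the sphere with diameter $[u,v]$. This gives the bound $\|f\|_{L^2_v}\|g\|_{L^2_v}\|h\|_{L^2_\nu}$ for the gain part, which is even stronger than needed.
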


\subsection{Derivative estimate to the linear operator $\nu(v)$ and $K$}

\begin{lemma}\label{lemma:deri_nu}
 There exists a constant $L$ %which depends only on $\nu_0$ 
 such that 
\begin{align}
 & |\p_{v_\parallel}^\a e^{-\nu(v)t}| \lesssim L^{|\a|} |\a| ! e^{-\nu(v)t/2},\ \ \forall \a\in \mathbb{Z}_+^2. \label{tenut}
 \end{align}
\end{lemma}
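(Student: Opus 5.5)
Our plan is to obtain \eqref{tenut} from Faà di Bruno's formula applied to the composition $e^{-t\nu(v)}$, combined with analytic-type derivative bounds on $\nu$ itself and with Lemma \ref{lemma:enut_control}, which absorbs the powers of $t$.

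\textbf{Step 1: derivative bounds on $\nu$.} We first show
\[
|\p_{v_\parallel}^\beta \nu(v)| \le C_1^{|\beta|}|\beta|!\,\nu(v), \qquad \text{for all } \beta\ge 0 .
\]
For hard spheres, after integrating over $\omega$ one has $\nu(v)=c\int_{\R^3}|v-u|\mu(u)\,\dd u = c\int_{\R^3}|u|\mu(v-u)\,\dd u$. Every derivative can therefore be put on the Gaussian:
\[
\p^\beta_{v_\parallel}\nu = c\int |u|\,\p^\beta\mu(v-u)\,\dd u, \qquad \p^\beta\mu(z)=(-1)^{|\beta|}H_\beta(z)\mu(z),
\]
where $H_\beta$ is the Hermite polynomial. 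The Cramér-type bound $|H_\beta(z)|e^{-|z|^2/4}\lesssim \sqrt{\beta!}$ gives $|\p^\beta\mu(z)|\lesssim \sqrt{\beta!}\,e^{-|z|^2/4}$. Hence
\[
|\p^\beta\nu(v)|\lesssim \sqrt{\beta!}\int|u|e^{-|v-u|^2/4}\,\dd u\lesssim \sqrt{\beta!}\,\langle v\rangle \lesssim C_1^{|\beta|}|\beta|!\,\nu(v),
\]
using $\nu(v)\ge\nu_0\langle v\rangle$ from Lemma \ref{lemma:basic_K}. This is in fact better than needed. If one prefers to avoid Hermite asymptotics, the same conclusion follows from Cauchy estimates: $\nu$ extends holomorphically to a strip $|\mathrm{Im}\, v|<1$ with $|\nu|\lesssim\langle v\rangle$ there.

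\textbf{Step 2: Faà di Bruno.} Next we write
\[
\p^\a_{v_\parallel} e^{-t\nu} = e^{-t\nu}\sum_{\pi}\prod_{B\in\pi}\bigl(-t\,\p^{\a_B}\nu\bigr),
\]
where the sum runs over set partitions $\pi$ of the $|\a|$ derivatives. A partition with $j$ blocks is bounded, using Step 1, by $t^j\nu^j C_1^{|\a|}\prod_B|\a_B|!$. We then apply Lemma \ref{lemma:enut_control}, in the form with $\nu t$ in place of $t$:
\[
(t\nu)^j e^{-t\nu/2}\le 2^j j!.
\]
This leaves $e^{-t\nu/2}$ as required, and it remains to bound the combinatorial sum
\[
\sum_{j}\ \sum_{\pi:\,|\pi|=j} j!\,2^j\prod_{B\in\pi}|\a_B|! .
\]

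\textbf{Step 3: the combinatorial count.} This is the only real obstacle. We group the partitions by the ordered block sizes $(n_1,\dots,n_j)$ summing to $n=|\a|$. Dividing the multinomial coefficient by $j!$ to pass from ordered to unordered blocks, the number of partitions with these sizes, times $\prod n_i!$, equals $n!/j!$. The sum is therefore
\[
\sum_j \frac{n!}{j!}\,j!\,2^j\,\#\{\text{compositions of } n \text{ into } j \text{ parts}\}\le n!\sum_j 2^j\binom{n-1}{j-1}\le n!\,3^{n}.
\]
Combined with the factor $C_1^{|\a|}$, this gives \eqref{tenut} with $L=3C_1$.

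The main checks are that Step 1 genuinely yields factorial (not worse) growth uniformly in $v$, relative to $\nu$, and that the $j!$ produced by the time powers is exactly cancelled by the $1/j!$ from unordered blocks. This cancellation is what keeps the index at $|\a|!$ rather than $(|\a|!)^2$.
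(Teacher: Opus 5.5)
Your proof is correct and follows the same route as the paper. Both arguments bound $\p^\beta_{v_\parallel}\nu$ by $C^{|\beta|}|\beta|!\,\nu$ by moving the derivatives onto the Maxwellian (the paper does this via $\p_{v_\parallel}|(v-u)\cdot\omega| = -\p_{u_\parallel}|(v-u)\cdot\omega|$, integration by parts and \eqref{kmu}, which is your convolution representation in another form), then apply Fa\`a di Bruno and absorb the powers of $\nu t$ into half of the exponential; your set-partition version with the exact count $\sum_{|\pi|=j}\prod_{B}|\a_B|! = \frac{n!}{j!}\binom{n-1}{j-1}$ makes the cancellation of the $j!$ from $(\nu t)^j e^{-\nu t/2}$ explicit, which is cleaner than the paper's multi-index bookkeeping with the multiplicities $k_\beta$.
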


\begin{proof}
As a preliminary step, we first show that 
 \begin{align} \label{derinu}
&\p_{v_\parallel}^{\a}\nu(v)=\int_{\mathbb{R}^3}\int_{\mathbb{S}^2}\p_{v_\parallel}^{\a}|(v-u)\cdot \omega|\mu(u)\,\d\omega\d u=\int_{\mathbb{R}^3}\int_{\mathbb{S}^2}[(-\p_{u_\parallel})^{\a}|(v-u)\cdot \omega|]\mu(u)\,\d\omega\d u\notag\\
 &=\int_{\mathbb{R}^3}\int_{\mathbb{S}^2}|(v-u)\cdot \omega|\p_{u_\parallel}^{\a}\mu(u)\,\d\omega\d u\leq 8^{|\a|+1}|\a|!\int_{\mathbb{R}^3}\int_{\mathbb{S}^2}|(v-u)\cdot \omega|\mu^{1/2}(u)\,\d\omega\d u\notag \\
 &\lesssim 8^{|\a|+1}|\a|!\nu(v),
\end{align}
where for the inequality in the second line, we have used:
\begin{equation}\label{kmu}
\forall \g \in \mathbb{Z}_+^2, \ |\g | \geq 0,  \ |\p_{v_\parallel}^{\g}\mu^{1/2}|\leq 8^{|\g |+1}|\g |!\mu^{1/4}.
\end{equation}
Then we give the two-dimensional Faà di Bruno formula: for the composite function $h(v) = f(g(v))$, %where $\mathbf{v} = (v_1, v_2),\ \alpha = (\alpha_1,\alpha_2)\in\mathbb{Z}_+^2$,\ $\beta = (\beta_1,\beta_2)\in\mathbb{Z}_+^2$,
\begin{equation*}
\partial^\alpha h = \alpha! \sum \frac{f^{(m)}(g(v))}{\prod_{\beta} k_\beta!} \prod_{\beta} \left( \frac{ \p^\beta g}{\beta!} \right)^{k_\beta}.
\end{equation*}
Here, $\beta = (\beta_1,\beta_2) \in \mathbb{N}_0^2$ with $|\beta| = \beta_1 + \beta_2 \ge 1$, and $k_\beta$ is a non‑negative integer indicating how many times the partial derivative $\partial^{\beta}$  appears.
 The summation is taken over all non-negative integer sequences ${k_{\beta}}$ satisfying $\sum_{\beta} \beta k_{\beta} = \alpha$. And $m=\sum_{\beta} k_{\beta} $.
 Therefore,   the multi‑index $\beta$ has to  satisfy   $\beta_1\leq \a_1, \beta_2\leq \a_2$.

Let $f(x)=e^x$ and $g(v)=-\nu(v)t$. We derive
\begin{equation*}
\partial_{v_\parallel}^\alpha e^{-\nu(v)t} = \alpha! \sum%_{\substack{k_\beta }} 
\frac{e^{-\nu(v)t}}{\prod_{\beta} k_\beta!} \prod_{\beta} \left( \frac{ t\p_{v_\parallel}^{\b}\nu(v)}{\beta!} \right)^{k_\beta}(-1)^{k_\beta}
.
\end{equation*}
Then we use \eqref{derinu} and to get 
\begin{align*}
|\partial_{v_\parallel}^\alpha e^{-\nu(v)t}|& \leq \alpha!e^{-\nu(v)t/2}\sum%_{\substack{k_\beta }}
\frac{e^{-\nu(v)t/2}}{\prod_{\beta} k_\beta!} \prod_{\beta} \left( \frac{8^{|\b|+1}|\beta|!\nu(v)t}{\beta!}\right)^{k_\beta}
\\& \leq \alpha!e^{-\nu(v)t/2}\sum%_{\substack{k_\beta }}
\frac{e^{-\nu(v)t/2}}{\prod_{\beta} k_\beta!} \prod_{\beta} \left( 16^{|\b|+1}\nu(v)t\right)^{k_\beta}
\\&\leq \alpha!e^{-\nu(v)t/2}\sum%_{\substack{k_\beta }}
\frac{1}{\prod_{\beta} k_\beta!} \prod_{\beta}16^{(|\b|+1)k_\beta} k_\beta!2^{k_\beta}
.
\end{align*}
Here in the second line, we use \eqref{jiechenmn} for the second inequality; that is 
$$\frac{|\beta|!}{\beta!}=\frac{(\beta_1+\beta_2)!}{\beta_1!\beta_2!}\leq 2^{|\beta|}.$$
Note that $\sum \beta k_\beta = \alpha$. So $\sum|\beta| k_\beta = |\alpha|,\ \sum k_\beta \leq |\alpha|$. Besides, the total number of summation is bounded by $4^{|\alpha|}$. Therefore, there exists a constant $L$ large enough such that 
\begin{align*}
|\partial_{v_\parallel}^\alpha e^{-\nu(v)t}|&\leq\alpha!e^{-\nu(v)t/2}\sum%_{\substack{k_\beta }}
\prod_{\beta}16^{(|\b|+1)k_\beta} 2^{k_\beta}\leq \alpha!e^{-\nu(v)t/2}16^{(|\a|+|\a|)}2^{|\a|}4^{|\alpha|}
\leq L^{|\a|} |\a| ! e^{-\nu(v)t/2}.
\end{align*}
We then complete the proof of Lemma \ref{lemma:deri_nu}.
\end{proof}

\begin{lemma}\label{estdivk1ta}
Let $\tau=(\tau_1, \tau_2)\in \mathbb{Z}^2_+.$ Denote $(\p_{v_\parallel}+\p_{u_\parallel})^{\tau} : =(\p_{v_1}+\p_{u_1})^{\tau_1}(\p_{v_2}+\p_{u_2})^{\tau_2}$ and $\iota=(1,0)$ or $(0,1)$.
The following derivative estimate of $\mathbf{k}$ in Lemma \ref{lemma:basic_K} holds:
 \begin{align*}
\bigg|\int_{\mathbb{R}^3}\frac{w(v)}{w(u)} \p_{v_\parallel}^\iota(\p_{v_\parallel}+\p_{u_\parallel})^{\tau} \mathbf{k}(v,u) \dd u \bigg|\leq C|\tau|!\nu(v).
\end{align*} 

\end{lemma}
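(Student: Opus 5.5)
The plan is to use the translation structure of the hard-sphere kernel rather than differentiate $\mathbf{k}$ in its raw form. Recall Grad's explicit representation $\mathbf{k}=\mathbf{k}_2-\mathbf{k}_1$ with
\begin{equation*}
\mathbf{k}_1(v,u)=C_1|v-u|e^{-\frac{|v|^2+|u|^2}{4}},\qquad \mathbf{k}_2(v,u)=\frac{C_2}{|v-u|}\exp\Big(-\frac{|v-u|^2}{8}-\frac{(|v|^2-|u|^2)^2}{8|v-u|^2}\Big).
\end{equation*}
I will change variables to $z=v-u$ and $s=v+u$. Since $|v|^2-|u|^2=z\cdot s$ and $|v|^2+|u|^2=\frac12(|z|^2+|s|^2)$, both kernels become functions of the form $G(z)\,\Phi(z,s)$. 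Here $G(z)=|z|$ or $|z|^{-1}e^{-|z|^2/8}$, and $\Phi=e^{-|s|^2/8}$, respectively $\Phi=e^{-\eta^2/8}$ with $\eta:=z\cdot s/|z|$. The key point is that $\partial_{v_i}+\partial_{u_i}=2\partial_{s_i}$ annihilates every function of $z$. Hence $(\partial_{v_\parallel}+\partial_{u_\parallel})^\tau$ falls only on $\Phi$, and the singular factor $1/|z|$ is never differentiated by it.

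\textbf{Step 1 (the $\tau$-derivatives).} For $\mathbf{k}_1$, the Gaussian factorizes in $s_1,s_2,s_3$. Therefore $\partial_{s_\parallel}^\tau e^{-|s|^2/8}$ is a product of one-dimensional Hermite functions, and the Cramér-type bound $|H_n(x)e^{-x^2/16}|\le C^n\sqrt{n!}$ gives
\begin{equation*}
|\partial_{s_\parallel}^\tau e^{-|s|^2/8}|\le C^{|\tau|}\sqrt{\tau!}\,e^{-|s|^2/16}.
\end{equation*}
For $\mathbf{k}_2$, the map $s\mapsto \eta$ is linear with gradient $z/|z|$, a unit vector. Thus
\begin{equation*}
\partial_{s_\parallel}^\tau e^{-\eta^2/8}=\Big(\frac{z_\parallel}{|z|}\Big)^\tau (\tfrac{d}{d\eta})^{|\tau|}e^{-\eta^2/8},
\end{equation*}
which is bounded by $C^{|\tau|}\sqrt{|\tau|!}\,e^{-\eta^2/16}$. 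In both cases, after multiplying by $2^{|\tau|}$, we get $C^{|\tau|}\sqrt{|\tau|!}\le C'|\tau|!$, because $\sqrt{n!}$ eventually dominates $C^n$. The resulting kernels have the same structure as $\mathbf{k}_1,\mathbf{k}_2$, with the exponents $1/8$ replaced by $1/16$. The same factorial bound also comes out of an induction on $|\tau|$ through the three-term Hermite recursion; this is presumably what the inductive formula of the paper encodes.

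\textbf{Step 2 (the extra $\partial_{v_\parallel}^\iota$).} In the variables $(z,s)$ we have $\partial_{v_i}=\partial_{z_i}+\partial_{s_i}$. The $\partial_{s_i}$ part is handled exactly as in Step 1 with $|\tau|+1$ derivatives, costing only one more factor of $C$. The $\partial_{z_i}$ part falls either on $G$ or on $\Phi$. On $G$ it produces $|z|^{-2}e^{-|z|^2/8}$ (or a bounded factor for $\mathbf{k}_1$), which is still integrable near $z=0$ in $\mathbb{R}^3$. On $\Phi$ it produces $\partial_{z_i}\eta\cdot(\dots)$ with $|\partial_{z_i}\eta|\lesssim |s|/|z|$. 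Altogether the differentiated kernel is bounded by
\begin{equation*}
C|\tau|!\,\Big(\frac{1+|v|+|u|}{|z|^2}+\frac{1+|v|+|u|}{|z|}\Big)e^{-\frac{|z|^2}{16}-\frac{\eta^2}{32}}
\end{equation*}
plus the analogous $\mathbf{k}_1$-type term. Here one power of $(d/d\eta)$ and the factor $\eta$ have been absorbed into the Gaussian.

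\textbf{Step 3 (integration with the weight), which I expect to be the main obstacle.} It remains to show
\begin{equation*}
\int_{\mathbb{R}^3}\frac{w(v)}{w(u)}\,\frac{1+|v|+|u|}{|v-u|^2}\,e^{-\frac{|v-u|^2}{16}-\frac{(|v|^2-|u|^2)^2}{32|v-u|^2}}\,\dd u\lesssim \nu(v)\sim\langle v\rangle .
\end{equation*}
This requires redoing Grad's completing-the-square argument used for \eqref{k_theta} with the weakened exponents. Writing $u=v+z$ and $|v|^2-|u|^2=-2v\cdot z-|z|^2$, the weight ratio is $e^{\theta(2v\cdot z+|z|^2)}$, and it must be dominated by the two Gaussians. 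This is where the restriction $\theta<1/8$ in \eqref{weight_w} has to be checked against the new constants $1/16,1/32$; if it is too tight, I will keep a larger fraction of the original exponents in Steps 1--2 by using $|H_n(x)|e^{-\epsilon x^2}\le C_\epsilon^n\sqrt{n!}$ with small $\epsilon$. After the change of variables the growth factor $|u|\le |v|+|z|$ is harmless, because the $z$-Gaussian absorbs $|z|$. The $|z|^{-2}$ singularity is integrable near the origin, and the resulting bound is $C\langle v\rangle\lesssim\nu(v)$ by Lemma \ref{lemma:basic_K}. The $\mathbf{k}_1$ contributions are trivially bounded, since $e^{-(|v|^2+|u|^2)/16}$ beats $w(v)/w(u)$. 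Combining the three steps yields the claimed $C|\tau|!\,\nu(v)$.
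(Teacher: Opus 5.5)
Your proposal is correct and is essentially the paper's own argument: Grad's splitting, the observation that $\partial_{v_i}+\partial_{u_i}$ never touches the singular factor $|v-u|^{-1}$, and Hermite-type factorial bounds for the iterated derivatives of the Gaussians. Your reduction to a single derivative in $\eta=z\cdot s/|z|$ is a cleaner packaging of the recursion in Lemma \ref{lemma:n_derivative} plus the paper's $B_i,C_i,D$ Leibniz bookkeeping, and it gives the sharper intermediate bound $C^{|\tau|}\sqrt{|\tau|!}$; the remaining steps also match the paper (one extra $\partial_{v_\parallel}^\iota$ producing an integrable $|v-u|^{-2}$ singularity and a factor $\langle v\rangle$, then a Grad/Guo-type weighted integral).

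Two small repairs are needed. In Step 2, $\partial_{z_i}$ also falls on the angular factor $(z_\parallel/|z|)^\tau$, costing a harmless $|\tau|/|z|$ (the analogue of the paper's $\mathcal{K}_3,\mathcal{K}_5,\mathcal{K}_8$). Your Step 3 fallback is genuinely needed: with exponents $1/16,1/32$ the weight $w(v)/w(u)=e^{\theta|z|\eta}$ is absorbed only for $\theta<1/(8\sqrt{2})$, whereas keeping a $(1-\delta)$-fraction of the exponents covers all $\theta<1/8$. Also, for $\mathbf{k}_1$ what you actually retain is $e^{-(|v|^2+|u|^2)/8}$, not $e^{-(|v|^2+|u|^2)/16}$, and that is what makes this term bounded.
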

\begin{proof}
Recall the Grad estimate \cite{R}
$\mathbf{k}(v,u)=\mathbf{k_1}(v,u)+\mathbf{k_2}(v,u)$ with
\begin{align*}
 &\mathbf{k_1}(v,u)=|v-u|e^{-\frac{1}{4}(|v|^2+|u|^2)},
 \\&\mathbf{k_2}(v,u)=|v-u|^{-1}e^{-\frac{1}{8}|v-u|^2-\frac{1}{8}\frac{(|v|^2-|u|^2)^2}{|v-u|^{2}}}.
\end{align*}

The high order derivative to $\mathbf{k}_1$ and $\mathbf{k}_2$ involves inductive application of chain rule. Thus, for the bound of $\mathbf{k_1}$, we denote 
\begin{align*}
&\bar A=-\frac{1}{4}(|v|^2+|u|^2), \ \bar{B}_1 = -\frac{v_1+u_1}{2}, \ \bar B_2=-\frac{v_2+u_2}{2}, \ \bar C =-\frac{1}{2}.
\end{align*}

In view of Lemma \ref{lemma:n_derivative} in the appendix, we obtain
\begin{align*}
& (\p_{v_1}+\p_{u_1})^{\tau_1}(\p_{v_2}+\p_{u_2})^{\tau_2}e^{-\frac{1}{4}(|v|^2+|u|^2)}
\\&=(\p_{v_1}+\p_{u_1})^{\tau_1} \bigg[e^{\bar A}\sum_{m=0}^{[\tau_2/2]}\frac{\tau_2!}{m!(\tau_2-2m)!}\bigg(\frac{\bar C}{2}\bigg)^m\bar B_2^{\tau_2-2m}\bigg]
\\&=\sum_{m=0}^{[\tau_2/2]}\frac{\tau_2!}{m!(\tau_2-2m)!}\bigg(\frac{\bar C}{2}\bigg)^m\bar B_2^{\tau_2-2m}(\p_{v_1}+\p_{u_1})^{\tau_1} e^{\bar A}
\\&=\sum_{m=0}^{[\tau_2/2]}\frac{\tau_2!}{m!(\tau_2-2m)!}\bigg(\frac{\bar C}{2}\bigg)^m\bar B_2^{\tau_2-2m}\sum_{j=0}^{[\tau_1/2]}\frac{\tau_1!}{j!(\tau_1-2j)!}e^{\bar A}\bigg(\frac{\bar C}{2}\bigg)^j \bar{B}_1^{\tau_1-2j}.
\end{align*}
In the second equality, we applied the property that $(\p_{v_1} + \p_{u_1})\bar{B}_2 = (\p_{v_1}+\p_{u_1})\bar{C} = 0$.

This yields 
 \begin{align*}
& \bigg|\int_{\mathbb{R}^3}\frac{w(v)}{w(u)} \p_{v_\parallel}^\iota(\p_{v_\parallel}+\p_{u_\parallel})^{\tau} \mathbf{k_1}(v,u) \dd u\bigg| \\&= \bigg|\int_{\mathbb{R}^3}\frac{w(v)}{w(u)} \p_{v_\parallel}^\iota \Big[|v-u|(\p_{v_\parallel}+\p_{u_\parallel})^{\tau}e^{-\frac{1}{4}(|v|^2+|u|^2)}\Big] \dd u\bigg| \\&=\sum_{m=0}^{[\tau_2/2]}\frac{\tau_2!}{m!(\tau_2-2m)!}\sum_{j=0}^{[\tau_1/2]}\frac{\tau_1!}{j!(\tau_1-2j)!}\bigg|\bigg(\frac{\bar C}{2}\bigg)^m\bigg(\frac{\bar C}{2}\bigg)^j\int_{\mathbb{R}^3}\frac{w(v)}{w(u)} \p_{v_\parallel}^\iota\bigg(|v-u|e^{\bar A}\bar B_2^{\tau_2-2m}\bar B_1^{\tau_1-2j}\bigg)\dd u\bigg| 
\\&=\tau_1!\tau_2!\sum_{m=0}^{[\tau_2/2]}\frac{1}{m!(\tau_2-2m)!}\sum_{j=0}^{[\tau_1/2]}\frac{1}{j!(\tau_1-2j)!}\\&\times\bigg|\bigg(\frac{1}{4}\bigg)^m\bigg(\frac{1}{4}\bigg)^j\int_{\mathbb{R}^3}\frac{w(v)}{w(u)} \p_{v_\parallel}^\iota\bigg[|v-u|e^{-\frac{1}{4}(|v|^2+|u|^2)}\bigg(-\frac{v_2+u_2}{2}\bigg)^{\tau_2-2m}\bigg(-\frac{v_1+u_1}{2}\bigg)^{\tau_1-2j}\bigg]\dd u\bigg| 
\\&\leq C\tau!\nu(v).
\end{align*}
In the last inequality, we have applied Lemma \ref{lemma:derk12} in the appendix.

The rest is devoted to proving the bound of $\mathbf{k_2}$.
Denote 
\begin{align}
&A=-\frac{(|v|^2-|u|^2)^2}{8|v-u|^2},\ B_1=-\frac{(|v|^2-|u|^2)(v_1-u_1)}{2|v-u|^2}, \ B_2=-\frac{(|v|^2-|u|^2)(v_2-u_2)}{2|v-u|^2}, \notag\\
&C_1 =-\frac{(v_1-u_1)^2}{|v-u|^2}, \ C_2 =-\frac{(v_2-u_2)^2}{|v-u|^2}, \ D=-\frac{(v_1-u_1)(v_2-u_2)}{|v-u|^2}. \label{ABCDEF}
\end{align}

In view of Lemma \ref{lemma:n_derivative} in the appendix, we obtain
 \begin{align*}
&(\p_{v_1}+\p_{u_1})^{\tau_1}(\p_{v_2}+\p_{u_2})^{\tau_2}e^{-\frac{1}{8}\frac{(|v|^2-|u|^2)^2}{|v-u|^{2}}}\\
&=(\p_{v_1}+\p_{u_1})^{\tau_1} \bigg[e^A\sum_{m=0}^{[\tau_2/2]}\frac{\tau_2!}{m!(\tau_2-2m)!}\bigg(\frac{C_2}{2}\bigg)^m B_2^{\tau_2-2m}\bigg]\\
&=\sum_{m=0}^{[\tau_2/2]}\frac{\tau_2!}{m!(\tau_2-2m)!}\bigg(\frac{C_2}{2}\bigg)^m\sum_{j=0}^{\tau_1}\frac{\tau_1!}{j!(\tau_1-j)!}\big[(\p_{v_1}+\p_{u_1})^je^A\big](\p_{v_1}+\p_{u_1})^{\tau_1-j}B_2^{\tau_2-2m}\\
&=\sum_{m=0}^{[\tau_2/2]}\frac{\tau_2!}{m!(\tau_2-2m)!}\bigg(\frac{C_2}{2}\bigg)^m\sum_{j=0}^{\tau_1}\frac{\tau_1!}{j!(\tau_1-j)!}\sum_{p=0}^{[j/2]}\frac{j!}{p!(j-2p)!}e^A\bigg(\frac{C_1}{2}\bigg)^pB_1^{j-2p}\\
&\times\frac{(\tau_2-2m)!}{[\tau_2-2m-(\tau_1-j)]!}B_2^{\tau_2-2m-(\tau_1-j)}D^{\tau_1-j}\\
&=\tau_1!\tau_2!\sum_{m=0}^{[\tau_2/2]}\frac{1}{m!}\sum_{j=0}^{\tau_1}\frac{1}{(\tau_1-j)!}\frac{1}{[\tau_2-2m-(\tau_1-j)]!}\sum_{p=0}^{[j/2]}\frac{1}{p!(j-2p)!}\\
&\times\bigg(\frac{C_2}{2}\bigg)^me^A\bigg(\frac{C_1}{2}\bigg)^p B_1^{j-2p}B_2^{\tau_2-2m-(\tau_1-j)}D^{\tau_1-j}.
\end{align*}
In the third line, we have used Leibniz's formula.

This yields 
 \begin{align*}
& \bigg|\int_{\mathbb{R}^3}\frac{w(v)}{w(u)} \p_{v_\parallel}^\iota(\p_{v_\parallel}+\p_{u_\parallel})^{\tau} \mathbf{k_2}(v,u) \dd u\bigg| \\
&= \bigg|\int_{\mathbb{R}^3}\frac{w(v)}{w(u)} \p_{v_\parallel}^\iota \Big[|v-u|^{-1}e^{-\frac{1}{8}|v-u|^{2}}(\p_{v_\parallel}+\p_{u_\parallel})^{\tau}e^{-\frac{1}{8}\frac{(|v|^2-|u|^2)^2}{|v-u|^{2}}}\Big] \dd u\bigg| \\
& =\tau_1!\tau_2!\sum_{m=0}^{[\tau_2/2]}\frac{1}{m!}\sum_{j=0}^{\tau_1}\frac{1}{(\tau_1-j)!}\frac{1}{[\tau_2-2m-(\tau_1-j)]!}\sum_{p=0}^{[j/2]}\frac{1}{p!(j-2p)!}\\&\times \bigg|\int_{\mathbb{R}^3}\frac{w(v)}{w(u)} \p_{v_\parallel}^\iota\bigg[|v-u|^{-1}e^{-\frac{1}{8}|v-u|^{2}}\bigg(\frac{C_2}{2}\bigg)^me^A\bigg(\frac{C_1}{2}\bigg)^pB_1^{j-2p}B_2^{\tau_2-2m-(\tau_1-j)}D^{\tau_1-j}\bigg] \dd u \bigg|
\\& \leq C \tau! \nu(v)% \bigg|\int_{\mathbb{R}^3}\frac{w(v)}{w(u)}\big(1+|v-u|^{-1}+|v-u|^{-2}\big)e^{-\frac{1}{8}|v-u|^{2}}e^{-\frac{1}{8}\frac{(|v|^2-|u|^2)^2}{|v-u|^{2}}} \dd u \bigg|
.
\end{align*}
In the last inequality, we have applied Lemma \ref{lemma:derk12} in the appendix. We then complete the proof of Lemma \ref{estdivk1ta}.
\end{proof}

\subsection{Derivative estimate to the nonlinear operator $\Gamma$} 
%The following estimates holds for the nonlinear operator. 
Recall  $\hat{\mathcal{T}}(\hat{f}, \hat g, h)$ is defined in \eqref{def.hatgamma}.

\begin{lemma}
For any multi-indices $\a$ and $\g$, we have the following estimate:
  \begin{align} \label{gamma_estk}
 \Vert \nu^{-1} (1+t)^{\sigma/2}w\hat{\mathcal T}( \hat f, \hat g,\p_{v_\parallel}^{\a}\mu^{1/2})\Vert_{L^\infty_{x_3,v}} \leq 8^{|\a|+1}|\a|! \Vert (1+t)^{\sigma/2}w\hat f\Vert_{L^\infty_{x_3,v}}*_k\Vert(1+t)^{\sigma/2} w\hat g\Vert_{L^\infty_{x_3,v}}.
\end{align}
Moreover, it holds 
 \begin{align} \label{gamma_dexhk}
 \Vert \nu^{-1} (1+t)^{\sigma/2}w\p_{v_\parallel}^{\g}\hat{\mathcal T}( \hat f, \hat g,&\mu^{1/2})\Vert_{L^\infty_{x_3,v}} \leq\sum_{\delta\leq \g}\sum_{\tau\leq \delta} \binom{\g}{\delta}\binom{\delta}{\tau}8^{|\tau|+1}|\tau|!\notag\\
	& \times \Vert (1+t)^{\sigma/2}w\p_{v_\parallel}^{\g-\delta}\hat f\Vert_{L^\infty_{x_3,v}}*_k\Vert(1+t)^{\sigma/2} w\p_{v_\parallel}^{\delta-\tau}\hat g\Vert_{L^\infty_{x_3,v}}.
\end{align}

\end{lemma}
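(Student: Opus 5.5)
The plan is to reduce \eqref{gamma_estk} to the already known pointwise estimate for $\hat\Gamma$ (the $L^\infty_{T,x_3,v}$ bound from \cite{chenduanzhang2024} stated above), and then obtain \eqref{gamma_dexhk} by differentiating the trilinear form and applying \eqref{gamma_estk} term by term.

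\textbf{Step 1: the bound \eqref{gamma_estk}.} The only difference between $\hat{\mathcal T}(\hat f,\hat g,\p_{v_\parallel}^\a\mu^{1/2})$ and $\hat\Gamma(\hat f,\hat g)=\hat{\mathcal T}(\hat f,\hat g,\mu^{1/2})$ is the weight $h(u)$ in front of the integrand. By \eqref{kmu} we have $|\p_{v_\parallel}^\a\mu^{1/2}(u)|\le 8^{|\a|+1}|\a|!\,\mu^{1/4}(u)$. I will bound both the gain and the loss parts by absolute values: $|w\hat f(k-\ell,v')|\,|w\hat g(\ell,u')|$ for the gain term and $|w\hat f(k-\ell,v)|\,|w\hat g(\ell,u)|$ for the loss term. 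The weight is handled by $w(v)\le w(v')w(u')$, which follows from energy conservation since $|v|^2\le|v'|^2+|u'|^2$, and similarly $w(v)\le w(v)w(u)$. What remains is
$$\int\int|(v-u)\cdot\omega|\,\mu^{1/4}(u)\,\dd\omega\dd u\lesssim\nu(v).$$
This holds exactly as in the $\mu^{1/2}$ case, because $\mu^{1/4}$ is still a Gaussian. Multiplying by $(1+t)^{\sigma/2}$ on both sides, and writing $1\le (1+t)^{\sigma/2}(1+t)^{\sigma/2}$ to split the time weight between the two factors, gives \eqref{gamma_estk}. The constant produced is $C\,8^{|\a|+1}|\a|!$. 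The $C$ can be absorbed, for instance by using the sharper bound $\mu^{1/2-\epsilon}$ in \eqref{kmu}, or by accepting an inessential constant.

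\textbf{Step 2: the formula behind \eqref{gamma_dexhk}.} I want to differentiate $\hat{\mathcal T}(\hat f,\hat g,\mu^{1/2})$ in $v_\parallel$. The first move is to change variables $u\mapsto u+v$, so that the post-collisional velocities become
$$v'=v+[(u)\cdot\omega]\omega,\qquad u'=v+u-[u\cdot\omega]\omega.$$
In these variables every argument depends on $v$ only through a translation, so $\p_{v_\parallel}$ acts as a derivation on each factor. This gives
$$\p_{v_\parallel}^\g\hat{\mathcal T}(\hat f,\hat g,\mu^{1/2})=\sum_{\delta\le\g}\sum_{\tau\le\delta}\binom{\g}{\delta}\binom{\delta}{\tau}\hat{\mathcal T}\big(\p_{v_\parallel}^{\g-\delta}\hat f,\p_{v_\parallel}^{\delta-\tau}\hat g,\p_{v_\parallel}^{\tau}\mu^{1/2}\big).$$
Care is needed on one point: the kernel $|u\cdot\omega|$ is independent of $v$ after the shift, so no derivative falls on it, and the convolution in $\ell$ commutes with $\p_{v_\parallel}$.

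\textbf{Step 3: conclusion.} Applying \eqref{gamma_estk} to each term of this sum, with $\a=\tau$, yields \eqref{gamma_dexhk} directly.

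\textbf{Main obstacle.} I expect the main obstacle to be justifying the Leibniz formula rigorously. This requires enough smoothness and decay of $\hat f,\hat g$ in $v_\parallel$ to differentiate under the integral, which I would obtain by a density argument. It also requires checking that the weight inequality $w(v)\le w(v')w(u')$ survives after derivatives have been placed on the factors; it does, since the derivatives only change which function sits in each slot.
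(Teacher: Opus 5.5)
Your proposal is correct and follows essentially the paper's route. You bound $\p_{v_\parallel}^{\a}\mu^{1/2}$ by \eqref{kmu} and handle the weight via energy conservation; the paper uses the identity $w(v')w(u')=w(v)w(u)$ to gain an extra $w^{-1}(u)$, which you rightly skip since $\mu^{1/4}(u)$ already decays. You then integrate to get $\nu(v)$ and obtain \eqref{gamma_dexhk} from the Leibniz formula plus \eqref{gamma_estk} applied termwise, and your shift $u\mapsto u+v$ supplies the justification of that Leibniz formula, which the paper only asserts.

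The absolute constant you flag is equally implicit in the paper's own last step, and it is harmless because the lemma is only used up to a constant $C$. Just carry it. Sharpening \eqref{kmu} would not remove it, since already for $\a=0$ one must compare the $u$-integral against $\mu^{1/2}(u)$ with $\nu(v)$.
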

\begin{proof}
 We first give the proof of \eqref{gamma_estk}.
We use \eqref{kmu}
to compute
\begin{align*}
&|w(v)\hat{\mathcal T}( \hat f, \hat g,\p_{v_\parallel}^{\a}\mu^{1/2})| \\
&= \Big|w(v)\int_{\mathbb{R}^3}\int_{\mathbb{S}^2} |(v-u)\cdot \omega| (\p_{v_\parallel}^{\a}\mu^{1/2}) \big[\hat f(u')*_k \hat g(v')-\hat f(u)*_k \hat g(v)\big] \dd \omega\dd u \Big|\\
 & \leq8^{|\a|+1}|\a|! w(v) \Big|\int_{\mathbb{R}^3}\int_{\mathbb{S}^2}|(v-u)\cdot \omega| \mu^{1/4}(u)\big[\hat f(u')*_k \hat g(v')-\hat f(u)*_k \hat g(v)\big] \dd \omega\dd u \Big| \\
 & \leq8^{|\a|+1}|\a|! w(v) \Big| \int_{\mathbb{R}^3} |v-u|\mu^{1/4}(u)w^{-1}(v)w^{-1}(u) \Vert w \hat f \Vert_{L^\infty_v} *_k\Vert w \hat  g\Vert_{L^\infty_v} \dd u \Big| \\
 & \leq 8^{|\a|+1}|\a|!\nu(v) \Vert w \hat f \Vert_{L^\infty_{v}}*_k \Vert w\hat g\Vert_{L^\infty_{v}} . 
\end{align*}
Together with the extra term $\nu^{-1}$, we take $L^\infty$ in $x_3$ and conclude \eqref{gamma_estk}.

Then, we prove  \eqref{gamma_dexhk}.
Leibniz's formula gives 
\begin{align*}
\p_{v_\parallel}^{\g}\hat{\mathcal T}( \hat f, \hat g,&\mu^{1/2})=\sum_{\delta\leq \g}\sum_{\tau\leq \delta} \binom{\g}{\delta}\binom{\delta}{\tau}\hat{\mathcal T}( \p_{v_\parallel}^{\g-\delta}\hat f, \p_{v_\parallel}^{\delta-\tau}\hat g,\p_{v_\parallel}^{\tau}\mu^{1/2}).
\end{align*}
We use \eqref{gamma_estk} to conclude \eqref{gamma_dexhk}.
Thus, we complete the proof.
\end{proof}

\section{Global well-posedness and asymptotic stability in $\mathbb{R}^2\times \mathbb{R}^+$ }\label{sec:withoutder}

In this section, we focus on the well-posedness of the equation $\hat f$ in \eqref{f_eqn_r2} in the domain of $\mathbb{R}^2\times\mathbb{R}^+$. We use the method with time-weighted introduced in Section 3 and 4 of \cite{chenduanzhang2024}. To prove Theorem \ref{thm:l1k_lpkexg}, we will establish the global-in-time existence of low-regularity solutions in the domain of $\mathbb{R}^2\times\mathbb{R}^+$ with the corresponding estimates \eqref{f_estimateexg} under the smallness condition \eqref{initial_assumptionexg}.

Below, we point out the main difference to \cite{chenduanzhang2024}.
In \cite{chenduanzhang2024}, $x_3\in (-1,1)$ is bounded, and the estimate can be closed in $L^\infty_{x_3}$, as $L^\infty_{x_3}\subset L^2_{x_3}$. In our setting, $x_3\in \mathbb{R}^+$ is unbounded. Consequently, we must close the estimate under the space $L^2_{x_3}\cap L^\infty_{x_3}$, also see Remark \ref{remark:L2_Linfty}. 

In general, the Poincaré inequality fails on an unbounded domain, making it challenging to obtain a zero-order macroscopic dissipation estimate. However, the degenerate macroscopic dissipation estimate remains valid even on an unbounded domain, due to the presence of the extra horizontal frequency variables in the elliptic system \eqref{elliptic_a}. Consequently, we obtain the following crucial $L^2$ energy estimate and macroscopic dissipation estimate in Lemma \ref{prop:full energyexg}.

\begin{lemma}[\textbf{Energy estimate}]\label{prop:full energyexg}
Let $\hat{f}$ be the solution to \eqref{f_eqn_r2}, with initial condition $\hat{f}_0$ satisfying \eqref{initial_assumptionexg}, then
\begin{align*}
 & \Vert \hat{f}\Vert_{L^1_k L^\infty_T L^2_{x_3,v}} + \Vert (\mathbf{I}-\mathbf{P})\hat{f}\Vert_{L^1_k L^2_{T,x_3,\nu}} + |(1-P_\gamma)\hat{f}|_{L^1_k L^2_{T,\gamma_+}} + \Big\Vert \frac{|k|}{\sqrt{1+|k|^2}} (\hat{a},\hat{\mathbf{b}},\hat{c})\Big\Vert_{L^1_k L^2_{T,x_3}} \\
 &\lesssim \Vert \hat{f}_0\Vert_{L^1_k L^2_{x_3,v}} + \Big\Vert \int_{\mathbb{R}^2} \Vert \hat{f}(k-\ell)\Vert_{L^\infty_T L^2_{x_3,v}} \Vert \hat{f}(\ell)\Vert_{L^2_T L^\infty_{x_3}L^2_\nu} \dd \ell \Big\Vert_{L^1_k} ,
\end{align*}
and for $2< p\leq \infty$,
\begin{align*}
 & \Vert \hat{f}\Vert_{L^p_k L^\infty_T L^2_{x_3,v}} + \Vert (\mathbf{I}-\mathbf{P})\hat{f}\Vert_{L^p_k L^2_{T,x_3,\nu}} + |(1-P_\gamma)\hat{f}|_{L^p_k L^2_{T,\gamma_+}} \notag\\
 & \lesssim  \Vert \hat{f}_0\Vert_{L^p_k L^2_{x_3,v}} + \Big\Vert \int_{\mathbb{R}^2} \Vert \hat{f}(k-\ell)\Vert_{L^\infty_T L^2_{x_3,v}} \Vert \hat{f}(\ell)\Vert_{L^2_T L^\infty_{x_3}L^2_\nu} \dd \ell \Big\Vert_{L^p_k}.
\end{align*}
\end{lemma}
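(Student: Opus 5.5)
We argue frequency by frequency. For fixed $k\in\R^2$ we derive an $L^2_{x_3,v}$ energy identity and a degenerate macroscopic dissipation bound for $\hat f(t,k,\cdot,\cdot)$. We then take the square root and integrate in $k$ against $L^1_k$ or $L^p_k$.

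\textbf{Step 1: energy identity.} Take the real part of the inner product of \eqref{f_eqn_r2} with $\bar{\hat f}$ in $L^2_{x_3,v}$ and integrate over $[0,t]$. The term $iv_\parallel\cdot k$ is skew, so its real part vanishes. The transport term $v_3\p_{x_3}$ produces the boundary flux $-\frac12\int_{\R^3} v_3|\hat f(k,0,v)|^2\dd v$; the contribution at $x_3=\infty$ vanishes by the far-field condition. Under the diffuse condition this flux equals $\frac12|(1-P_\gamma)\hat f|^2_{L^2_{\gamma_+}}$, since $P_\gamma$ is an orthogonal projection with respect to the $|v_3|$-weighted measure. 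Coercivity of $\mathcal L$ gives $\|(\mathbf I-\mathbf P)\hat f\|^2_{L^2_\nu}$. The nonlinear term is bounded by \eqref{gamma_product}, and Young's inequality absorbs $o(1)\|(\mathbf I-\mathbf P)\hat f\|^2_{L^2_{T,x_3,\nu}}$. Hence, for each $k$,
\begin{align*}
\sup_t\|\hat f\|^2_{L^2_{x_3,v}}+\|(\mathbf I-\mathbf P)\hat f\|^2_{L^2_{T,x_3,\nu}}+|(1-P_\gamma)\hat f|^2_{L^2_{T,\gamma_+}}\lesssim \|\hat f_0\|^2+\Big(\int_{\R^2}\|\hat f(k-\ell)\|_{L^\infty_TL^2_{x_3,v}}\|\hat f(\ell)\|_{L^2_TL^\infty_{x_3}L^2_\nu}\dd\ell\Big)^2 .
\end{align*}
Taking square roots and the $L^p_k$ norm ($2<p\le\infty$) gives the second estimate, which contains no macroscopic term.

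\textbf{Step 2: degenerate macroscopic dissipation.} This is needed for the $L^1_k$ estimate. Following the dual argument of Esposito--Guo--Kim--Marra, for each $k$ we solve degenerate elliptic problems on $x_3\in(0,\infty)$. For $\hat a$, for instance, take
\begin{align*}
(|k|^2-\p_{x_3}^2)\phi_a=\frac{|k|^2}{1+|k|^2}\bar{\hat a}
\end{align*}
with a Neumann condition at $x_3=0$. Analogous systems for $\hat{\mathbf b}$ and $\hat c$ use Dirichlet or Neumann conditions chosen so that the boundary terms are controlled by $(1-P_\gamma)\hat f$; for $\hat{\mathbf b}$ one uses the zero mass flux at $x_3=0$.

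Multiplying by $\frac{|k|^2}{1+|k|^2}\bar{\hat a}$ and integrating by parts gives
\begin{align*}
|k|^2\|\phi_a\|^2+\|\p_{x_3}\phi_a\|^2\le \frac{|k|^2}{1+|k|^2}\|\hat a\|\,\|\phi_a\|.
\end{align*}
It follows that $|k|\|\phi_a\|\lesssim \frac{|k|}{\sqrt{1+|k|^2}}\|\hat a\|$ and $\|\p_{x_3}\phi_a\|\lesssim \frac{|k|}{\sqrt{1+|k|^2}}\|\hat a\|$. These are exactly the quantities that appear when the weak formulation is tested with $\psi=\phi_a(|v|^2-10)\sqrt\mu$ or similar.

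The trace $|\phi_a(0)|$ is bounded by $\|\phi_a\|^{1/2}\|\p_{x_3}\phi_a\|^{1/2}$. The resulting factor $|k|^{-1/2}$ can blow up at small $|k|$. To handle it, we use $\p_{x_3}^2\phi_a$ together with the explicit half-line Green's function $e^{-|k||x_3-y|}/(2|k|)$, combined with the prefactor $|k|^2/(1+|k|^2)$.

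The weak formulation includes a time-derivative term $\p_t\langle\hat f,\psi\rangle$, which we treat as a total derivative bounded by $\sup_t\|\hat f\|^2$. Combining the tests yields
\begin{align*}
\Big\|\frac{|k|}{\sqrt{1+|k|^2}}(\hat a,\hat{\mathbf b},\hat c)\Big\|^2_{L^2_{T,x_3}}\lesssim \sup_t\|\hat f\|^2+\|(\mathbf I-\mathbf P)\hat f\|^2_\nu+|(1-P_\gamma)\hat f|^2_{\gamma_+}+\text{(nonlinear term)}.
\end{align*}
Adding a small multiple of this to Step 1, taking square roots and integrating in $L^1_k$ gives the first estimate.

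\textbf{Main obstacle.} The hardest step is the uniform-in-$k$ elliptic and trace control in Step 2 on the unbounded half-line, where no Poincaré inequality is available. I would try first to write $\phi$ explicitly via the Green's function and verify that each boundary and trace term carries at least the factor $\frac{|k|}{\sqrt{1+|k|^2}}$, uniformly as $|k|\to0$ and $|k|\to\infty$.
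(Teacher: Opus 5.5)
The overall architecture matches the paper's. You use a per-$k$ $L^2_{x_3,v}$ energy identity in which the diffuse wall contributes $\frac12|(1-P_\gamma)\hat f|^2_{L^2_{\gamma_+}}$, which already gives the $L^p_k$ bound. You add a degenerate dual argument with $(|k|^2-\p_{x_3}^2)\phi_a=\frac{|k|^2}{1+|k|^2}\bar{\hat a}$ under a Neumann condition, and combine the two before taking $L^1_k$. Step 1 and the final combination are fine.

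The gap is in Step 2, exactly at the half-line verification that is the paper's only new input. First, your test function $\phi_a(|v|^2-10)\sqrt\mu$ does not detect $\hat a$. Since $\int v_iv_j(|v|^2-10)\mu\,\dd v=-5\delta_{ij}$, its transport term pairs $\nabla\phi_a$ with $\hat{\mathbf b}$. The paper uses $\psi_a=\sqrt\mu(|v|^2-10)(-i\bar v\cdot k+v_3\p_{x_3})\phi_a$, i.e. $(|v|^2-10)\sqrt\mu\,v\cdot\nabla_x\phi_a$ in Fourier variables. Its transport term yields a multiple of $\frac{|k|^2}{1+|k|^2}|\hat a|^2$; the $\hat c$ part cancels because of the constant $10$. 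The remaining transport terms, however, pair $(\mathbf I-\mathbf P)\hat f$ with $|k|^2\phi_a$, $|k|\p_{x_3}\phi_a$ and $\p^2_{x_3}\phi_a$. So your first-order bounds are not ``exactly the quantities that appear''. You need the $H^2$-level bound \eqref{H2_est_a}, which the paper obtains by also testing \eqref{elliptic_a} against $|k|^2\bar\phi_a$ and reading $\p^2_{x_3}\phi_a$ off the equation.

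Second, your ``main obstacle'' cannot be removed the way you propose, because the bare trace really is bad. For your Neumann problem, the Green's function gives $\phi_a(0)=\frac{|k|}{1+|k|^2}\int_0^\infty e^{-|k|y}\bar{\hat a}(y)\,\dd y$. Taking $\hat a=e^{-|k|y}$ shows $|\phi_a(0)|\sim|k|^{1/2}\|\hat a\|_{L^2_{x_3}}$ as $k\to0$. Hence no Green's-function computation can make it carry the factor $\frac{|k|}{\sqrt{1+|k|^2}}$. With your test function, the wall term $\phi_a(0)\int_{v_3<0}v_3(|v|^2-10)\sqrt\mu\,(1-P_\gamma)\hat f\,\dd v$ contains exactly this trace.

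The paper avoids the problem because $\psi_a$ involves only $k\phi_a$ and $\p_{x_3}\phi_a$. Thus only $|k|\phi_a(0)$ and $\p_{x_3}\phi_a(0)$ enter (the latter vanishes under Neumann; for Dirichlet problems $\p_{x_3}\phi(0)$ appears). The inequality $|g(0)|^2\le2\|g\|_{L^2_{x_3}}\|\p_{x_3}g\|_{L^2_{x_3}}$, applied to $g=|k|\phi_a$ and $g=\p_{x_3}\phi_a$ together with \eqref{H2_est_a}, bounds both traces by $\frac{|k|}{\sqrt{1+|k|^2}}\|\hat a\|_{L^2_{x_3}}$ uniformly in $k$.

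Third, the time-derivative term is not a pure total derivative. Because $\psi_a$ depends on $t$ through $\hat a(t)$, the term $\int_0^T\langle\hat f,\p_t\psi_a\rangle\,\dd t$ remains. It needs bounds on $\p_t\phi_a$ from the conservation laws. For example, $\p_t\hat a+ik_1\hat b_1+ik_2\hat b_2+\p_{x_3}\hat b_3=0$ holds with $\hat b_3=0$ at $x_3=0$ by zero mass flux. This gives $\||k|\p_t\phi_a\|+\|\p_{x_3}\p_t\phi_a\|\lesssim\frac{|k|^2}{1+|k|^2}\|\hat{\mathbf b}\|$, after which the usual $a$--$b$--$c$ hierarchy closes the estimate. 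The paper takes this part over unchanged from the earlier infinite-layer lemma it cites.
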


\begin{proof}
To prove the lemma, we first need the $L^2_{x_3,v}$ energy estimate
\begin{align*}
  & \Vert \hat{f}(T)\Vert_{L^p_k L^2_{x_3,v}} + \Vert (\mathbf{I}-\mathbf{P})\hat{f}\Vert_{L^p_k L^2_{T,x_3,\nu}} + |(I-P_\gamma)\hat{f}|_{L^p_k L^2_{T,\gamma_+}} \\
  &\lesssim \Vert \hat{f}_0 \Vert_{L^p_k L^2_{x_3,v}} + \Big \Vert \Big(\int_0^T \left|\int_{0}^\infty \int_{\mathbb{R}^3} \hat{\Gamma}(\hat{f},\hat{f}) \bar{\hat{f}} \dd v \dd x_3\right| \dd t \Big)^{1/2} \Big\Vert_{L^p_{k}} \\
  & \lesssim \Vert \hat{f}_0 \Vert_{L^p_k L^2_{x_3,v}} + o(1)\Vert (\mathbf{I}-\mathbf{P})\hat{f}\Vert_{L^p_k L^2_{T,x_3,\nu}} + \Big\Vert \int_{\mathbb{R}^2} \Vert \hat{f}(k-\ell)\Vert_{L^\infty_T L^2_{x_3,v}} \Vert \hat{f}(\ell)\Vert_{L^2_T L^\infty_{x_3}L^2_\nu} \dd \ell \Big\Vert_{L^p_k}. 
\end{align*}
In the last line, we have applied \eqref{gamma_product}.

We also need the following degenerate macroscopic dissipation estimate
\begin{align}
  &   \Big\Vert \frac{|k|}{\sqrt{1+|k|^2}}(\hat{a},\hat{\mathbf{b}},\hat{c})\Big\Vert_{L^1_k L^2_{T,x_3}} \lesssim \Vert (\mathbf{I}-\mathbf{P})\hat{f}\Vert_{L^1_k L^2_{T,x_3,v}}+ |(I-P_\gamma)\hat{f}|_{L^1_k L^2_{T,\gamma_+}} \notag\\
  &+ \Big\Vert \int_{\mathbb{R}^2} \Vert \hat{f}(k-\ell)\Vert_{L^\infty_T L^2_{x_3,v}} \Vert \hat{f}(\ell)\Vert_{L^2_T L^\infty_{x_3}L^2_\nu} \dd \ell \Big\Vert_{L^1_k}  + \Vert \hat{f}\Vert_{L^1_k L^\infty_T L^2_{x_3,v}} + \Vert \hat{f}_0\Vert_{L^1_k L^2_{x_3,v}}. 
  \label{degenerate_macro}
\end{align}
The proof of this estimate is the same as Lemma 5 in \cite{chenduanzhang2024}. We remark here that this dissipation estimate also holds when $x_3\in (0,\infty)$ is unbounded, since the Poincaré inequality is not required in Lemma 5 of \cite{chenduanzhang2024}. To be specific, we take the $\hat{a}$ estimate as an example. In the weak formulation argument, we choose a test function as
\begin{align}
  & \psi_a =  \sqrt{\mu} (|v|^2-10) (-i\bar{v}\cdot k + v_3 \p_{x_3} )\phi_a. \notag
\end{align}
Here $\phi_a$ satisfies the elliptic equation
\begin{align}
\begin{cases}
  & (|k|^2 - \p_{x_3}^2)\phi_a(k,x_3) = -\bar{\hat{a}}(k,x_3) \frac{|k|^2}{1+|k|^2} , \ x_3\in (0,\infty), \\
  &  \p_{x_3} \phi_a(k, \pm 1) = 0.  
\end{cases}\label{elliptic_a}
\end{align}
Here $\bar{\hat{a}}$ stands for the complex conjugate of $\hat{a}$.

Multiplying \eqref{elliptic_a} by $\bar{\phi}_a$, the complex conjugate of $\phi_a$, we obtain
\begin{align*}
  & |k|^2 \Vert \phi_a \Vert_{L^2_{x_3}}^2 + \Vert \p_{x_3}\phi_a \Vert_{L^2_{x_3}}^2 \lesssim \frac{|k|^2}{1+|k|^2} \Vert \hat{a}\Vert_{L^2_{x_3}}^2 + o(1)\frac{|k|^2}{1+ |k|^2} \Vert \phi_a \Vert_{L^2_{x_3}}^2, \\
  & \Vert |k| \phi_a\Vert_{L^2_{x_3}}^2 + \Vert \p_{x_3}\phi_a\Vert_{L^2_{x_3}}^2 \lesssim \frac{|k|^2}{1+|k|^2} \Vert \hat{a}\Vert_{L^2_{x_3}}^2.
\end{align*}

Multiplying \eqref{elliptic_a} by $|k|^2 \bar{\phi}_a$, we obtain
\begin{align*}
  &  |k|^4 \Vert \phi_a\Vert_{L^2_{x_3}}^2 + |k|^2 \Vert \p_{x_3}\phi_a\Vert_{L^2_{x_3}}^2 \lesssim o(1) |k|^4 \Vert \phi_a \Vert_{L^2_{x_3}}^2 + \frac{|k|^4}{(1+|k|^2)^2}\Vert \hat{a}\Vert_{L^2_{x_3}}^2, \\
  & \Vert |k|^2 \phi_a\Vert_{L^2_{x_3}}^2 + \Vert |k|\p_{x_3}\phi_a\Vert_{L^2_{x_3}}^2 \lesssim \frac{|k|^4}{1+|k|^4} \Vert \hat{a}\Vert_{L^2_{x_3}}^2 \lesssim \frac{|k|^2}{1+|k|^2}\Vert \hat{a}\Vert_{L^2_{x_3}}^2.
\end{align*}

This leads to the estimate that
\begin{align}
  &  \Vert \p_{x_3}^2 \phi_a \Vert_{L^2_{x_3}} \lesssim |k|^2 \Vert \phi_a\Vert_{L^2_{x_3}} + \frac{|k|^2}{1+|k|^2} \Vert \hat{a}\Vert_{L^2_{x_3}} \lesssim \frac{|k|}{\sqrt{1+|k|^2}} \Vert \hat{a}\Vert_{L^2_{x_3}}, \notag \\
  & \Vert ( |k|+ |k|^2) \phi_a\Vert_{L^2_{x_3}} + \Vert (1+|k|) \p_{x_3}\phi_a\Vert_{L^2_{x_3}} + \Vert \p_{x_3}^2 \phi_a\Vert_{L^2_{x_3}} \lesssim \frac{|k|}{\sqrt{1+|k|^2}} \Vert \hat{a}\Vert_{L^2_{x_3}}.\label{H2_est_a}
\end{align}
By trace theorem, using \eqref{H2_est_a} we conclude that
\begin{align}
  &  | |k| \phi_a(k,0)| \lesssim \frac{|k|}{\sqrt{1+|k|^2}} \Vert \hat{a}\Vert_{L^2_{x_3}}, \ | \p_{x_3} \phi_a(k,0)| \lesssim  \frac{|k|}{\sqrt{1+|k|^2}} \Vert \hat{a}\Vert_{L^2_{x_3}}. \notag
\end{align}
All computations only use the H\"older inequality without the Poincaré inequality. With the trace estimate and regularity estimate \eqref{H2_est_a}, the rest of the proof is the same as Lemma 5 in \cite{chenduanzhang2024} by changing the domain $(-1,1)$ to $(0,\infty)$.
\end{proof}

\begin{remark}\label{remark:continous_frequency}
The degenerate macroscopic dissipation estimate does not work when the domain is $\mathbb{T}^2\times \mathbb{R}^+$, because the horizontal Fourier variable becomes discrete. In this setting, the zero Fourier mode $k=0$ cannot be ignored; it corresponds to a one-dimensional half-space nonlinear problem, which remains an open problem.
\end{remark}

Then we include the time weight into the energy estimate.
\begin{lemma}[\textbf{Energy estimate with time decay}]\label{lemma:full_energy_decayexg}
Let $p>2$ and $\sigma = 2(1-1/p)-2\e$ with $\e>0$ small enough, then under the assumption in Lemma \ref{prop:full energyexg}, we have
\begin{align*}
 & \Vert (1+t)^{\sigma/2} \hat{f}\Vert_{L^1_k L^\infty_T L^2_{x_3,v}} + \Vert (1+t)^{\sigma/2} (\mathbf{I}-\mathbf{P}) \hat{f} \Vert_{L^1_k L^2_T L^2_{x_3,\nu}} + |(1+t)^{\sigma/2}(1-P_\gamma)\hat{f}|_{L^1_k L^2_{T,\gamma_+}} \\
 & + \Big\Vert (1+t)^{\sigma/2} \frac{|k|}{\sqrt{1+|k|^2}}(\hat{a},\hat{\mathbf{b}},\hat{c})\Big\Vert_{L^1_k L^2_{T,x_3}} \\
 & \lesssim \Vert \hat{f}_0\Vert_{L^1_k L^2_{x_3,v}} + \Vert \hat{f}_0\Vert_{L^p_k L^2_{x_3,v}} + \Big\Vert \int_{\mathbb{R}^2} \Vert (1+t)^{\sigma/2} \hat{f}(k-\ell)\Vert_{L^\infty_T L^2_{x_3,v}} \Vert \hat{f}(\ell)\Vert_{L^2_T L^\infty_{x_3}L^2_\nu} \dd \ell \Big\Vert_{L^1_k} \\
 & + \Big\Vert \int_{\mathbb{R}^2} \Vert \hat{f}(k-\ell)\Vert_{L^\infty_T L^2_{x_3,v}} \Vert \hat{f}(\ell)\Vert_{L^2_T L^\infty_{x_3}L^2_\nu} \dd \ell \Big\Vert_{L^p_k} . 
\end{align*} 
\end{lemma}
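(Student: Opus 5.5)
We follow the time-weighted scheme of Sections 3--4 of \cite{chenduanzhang2024}, now in the half-space. First we multiply \eqref{f_eqn_r2} by $(1+t)^{\sigma}\bar{\hat f}$, integrate over $x_3\in(0,\infty)$, $v\in\R^3$ and $t\in[0,T]$, and take real parts. The diffuse boundary term gives $|(1+t)^{\sigma/2}(1-P_\gamma)\hat f|^2_{L^2_{T,\gamma_+}}$. Coercivity of $\mathcal L$ gives the microscopic dissipation. The time derivative of the weight produces the extra term
\[
\sigma\int_0^T(1+t)^{\sigma-1}\Vert\hat f\Vert^2_{L^2_{x_3,v}}\,\dd t .
\]
The nonlinear term is handled by the third estimate of the lemma on $\hat\Gamma$ (the $(1+t)^\sigma$ version). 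This yields the cubic term $\Vert(1+t)^{\sigma/2}\hat f(k-\ell)\Vert\,\Vert\hat f(\ell)\Vert$ stated in the lemma, after absorbing $o(1)\Vert(1+t)^{\sigma/2}(\mathbf I-\mathbf P)\hat f\Vert$. We then add a small multiple of the weighted analogue of the degenerate macroscopic estimate \eqref{degenerate_macro}. This follows by repeating the dual argument with test functions $(1+t)^{\sigma}\psi_{a,b,c}$, using the elliptic problem \eqref{elliptic_a} and its trace bounds \eqref{H2_est_a}, and it controls $\Vert(1+t)^{\sigma/2}\frac{|k|}{\sqrt{1+|k|^2}}(\hat a,\hat{\mathbf b},\hat c)\Vert$. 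The $\p_t$ falling on the weight again only gives terms of the form $(1+t)^{\sigma-1}\Vert\hat f\Vert^2$, and the same holds for the interaction functional.

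The main step is to control $\int_0^T(1+t)^{\sigma-1}\Vert\hat f\Vert^2_{L^2_{x_3,v}}\dd t$ in $L^1_k$ after taking square roots. We split the frequencies into $|k|\geq (1+t)^{-1/2}$ and $|k|\le(1+t)^{-1/2}$.

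\emph{High frequencies.} Here $(1+t)^{-1}\lesssim \frac{|k|^2}{1+|k|^2}$, so for a small constant $\eta$ the term is bounded by
\[
\eta\Big(\Vert(1+t)^{\sigma/2}(\mathbf I-\mathbf P)\hat f\Vert^2+\Vert (1+t)^{\sigma/2}\tfrac{|k|}{\sqrt{1+|k|^2}}(\hat a,\hat{\mathbf b},\hat c)\Vert^2\Big)
\]
plus a constant-size part near $|k|\sim1$. All of this is absorbed by the dissipation.

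\emph{Low frequencies.} We use Hölder in $k$:
\[
\int_{|k|\le(1+t)^{-1/2}}\Vert\hat f\Vert_{L^2_{x_3,v}}\dd k\lesssim (1+t)^{-(1-1/p)}\Vert\hat f\Vert_{L^p_kL^\infty_TL^2_{x_3,v}} .
\]
With $\sigma=2(1-1/p)-2\e$, the resulting time integral $\int(1+t)^{\sigma-1-2(1-1/p)}\dd t=\int (1+t)^{-1-2\e}\dd t$ converges. So the low-frequency contribution is bounded by $\Vert\hat f\Vert_{L^p_kL^\infty_TL^2}$, which Lemma \ref{prop:full energyexg} (second part) controls by $\Vert\hat f_0\Vert_{L^p_kL^2}$ plus the $L^p_k$ cubic term.

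The main obstacle is making this splitting compatible with the order $L^1_k$ outside and $L^2_T$ inside, because the cutoff $|k|\le(1+t)^{-1/2}$ depends on $t$. We intend to handle it as in \cite{chenduanzhang2024}. We decompose dyadically in time, $t\in[2^j,2^{j+1}]$, and use $\sup_t$ on each block. We then sum over $j$, using the geometric factor $2^{-\e j}$ gained from $\e>0$; this summability is exactly where $\sigma<2(1-1/p)$ is needed. Since $x_3$ is unbounded, all norms stay in $L^2_{x_3}$ (never $L^\infty_{x_3}$) in this step, and the Hölder step in $k$ involves only $k$. Combining the two frequency regimes, absorbing the small terms, and adding the $L^1_k$ and $L^p_k$ versions of Lemma \ref{prop:full energyexg} gives the stated inequality.
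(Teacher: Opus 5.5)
Your proposal is correct and is essentially the argument the paper invokes by reference (Proposition 5 of \cite{chenduanzhang2024} with $(-1,1)$ replaced by $(0,\infty)$): a $(1+t)^{\sigma}$-weighted energy estimate plus a small multiple of the weighted degenerate macroscopic estimate \eqref{degenerate_macro}, with the weight-derivative term $\sigma(1+t)^{\sigma-1}\Vert\hat f\Vert^2_{L^2_{x_3,v}}$ absorbed at high frequency and paid for by the $L^p_k$ norm at low frequency, where $\sigma<2(1-1/p)$ is exactly what makes the low-frequency contribution summable.

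Two small corrections. First, the high-frequency absorption needs a large constant $M$ in the cutoff, $|k|\ge M(1+t)^{-1/2}$, with the remaining bounded-time piece controlled by Lemma \ref{prop:full energyexg}. With the bare cutoff $|k|\ge(1+t)^{-1/2}$ you only get $\sigma(1+t)^{\sigma-1}\le 2\sigma(1+t)^{\sigma}\frac{|k|^2}{1+|k|^2}$, and $2\sigma$ is not small compared with the small coefficient in front of the macroscopic dissipation. Second, your dyadic time blocks do work, but they can be bypassed by working at fixed $k$: the bound $\int_0^T(1+t)^{\sigma-1}\mathbf{1}_{|k|\le M(1+t)^{-1/2}}\,\dd t\lesssim|k|^{-2\sigma}$ together with $|k|^{-\sigma}\in L^{p'}(\{|k|\le M\})$ gives the same conclusion directly.
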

\begin{proof}
The proof of this lemma is the same as Proposition 5 in \cite{chenduanzhang2024} with replacing the spatial domain $(-1,1)$ by $(0,\infty)$ since the Poincaré inequality is not required.
\end{proof}

Last, we give the $L^\infty$ estimate. Recall the velocity weight is defined in \eqref{weight_w}.
\begin{lemma}[\textbf{$L^1_k L^\infty_{T,x_3,v}$ estimate with time decay}]\label{lemma:linftyexg}
Let $\hat{f}$ be the solution to \eqref{f_eqn_r2} with initial data $f_0$ satisfying \eqref{initial_assumptionexg}, then we have the following $L^1_k L^\infty_{T,x_3,v}$ control with time decay:
\begin{align*}
 & \Vert (1+t)^{\sigma/2} w\hat{f}\Vert_{L^1_k L^\infty_{T,x_3,v}} \lesssim \Vert w\hat{f}_0\Vert_{L^1_k L^\infty_{x_3,v}} + \Vert (1+t)^{\sigma/2} \hat{f}\Vert_{L^1_k L^\infty_T L^2_{x_3,v}} \\
 & + \Big\Vert \int_{\mathbb{R}^2}\Vert (1+t)^{\sigma/2} w\hat{f}(k-\ell)\Vert_{L^\infty_{T,x_3,v}} \Vert (1+t)^{\sigma/2} w\hat{f}(\ell) \Vert_{L^\infty_{T,x_3,v}} \dd \ell \Big\Vert_{L^1_k}.
\end{align*}
\end{lemma}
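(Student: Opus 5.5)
The plan is the standard $L^\infty$ argument of Guo along characteristics, carried out for each fixed frequency $k$ and then integrated in $k$. Set $h := w\hat f$. For fixed $k$, $h$ satisfies
\[
\p_t h + i v_\parallel\cdot k\, h + v_3\p_{x_3}h + \nu(v) h = K_w h + w\hat\Gamma(\hat f,\hat f), \qquad K_w h := wK(h/w).
\]
The phase $e^{-iv_\parallel\cdot k(t-s)}$ has modulus one, so the only characteristic that matters is in $x_3$.

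\textbf{Duhamel representation.} Backward from $(t,x_3,v)$ the trajectory is $x_3 - v_3(t-s)$. If $v_3\le 0$, or if $t < t_{\mathbf b} = x_3/v_3$, it stays in $(0,\infty)$ up to $s=0$, and we pick up the initial-data term $e^{-\nu(v)t} h_0$. Otherwise it hits the wall at time $t-t_{\mathbf b}$, and there the diffuse condition writes $h$ as $w\sqrt{\mu}$ times an integral of $h/w$ over incoming velocities. We iterate this boundary representation finitely many times, say $k_0$ bounces, as in \cite{G,chenduanzhang2024}. The set of trajectories with more than $k_0$ bounces before time $0$ has small measure, $\le (1/2)^{k_0}$ uniformly in $t$, so its contribution is $o(1)\Vert h\Vert_{L^\infty}$. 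The half-space is simpler than the layer here: a trajectory leaving the wall with $v_3>0$ never returns, so repeated bounces cannot occur and at most one bounce appears. Each resulting representation consists of:
\begin{itemize}
\item an initial-data term, bounded by $\Vert w\hat f_0\Vert_{L^\infty}$;
\item a $K_w$-integral term;
\item a nonlinear term.
\end{itemize}

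\textbf{Time weights and the nonlinear term.} For the time weight we use $(1+t)^{\sigma/2}e^{-\nu(v)(t-s)} \lesssim (1+s)^{\sigma/2}e^{-\nu(v)(t-s)/2}$. For the nonlinear term we apply the last two estimates of the nonlinear lemma for $\hat\Gamma$ after the bound $\int_0^t e^{-\nu(t-s)/2}\nu\,\dd s\lesssim 1$. This gives exactly the convolution $\int\Vert(1+t)^{\sigma/2}w\hat f(k-\ell)\Vert_{L^\infty}\Vert(1+t)^{\sigma/2}w\hat f(\ell)\Vert_{L^\infty}\dd\ell$, pointwise in $k$.

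\textbf{The $K_w$ term and conversion to $L^2$.} This is the main step. We iterate Duhamel once more inside $K_w$ to get a double integral $\int\!\int \mathbf{k}_w(v,u)\mathbf{k}_w(u,u')\,h(s',x_3 - v_3(t-s) - u_3(s-s'), u')$. Then we split as in Guo:
\begin{itemize}
\item $|u|$ or $|u'|$ large, $|v-u|\le 1/N$, or $s-s'\le\epsilon$: these pieces are bounded by $o(1)\Vert (1+t)^{\sigma/2} h\Vert_{L^\infty}$ using Lemma on $\mathbf{k}_\theta$ and \eqref{k_theta}.
\item The remaining bounded piece, where $\mathbf{k}_w\le C_N$: we change variables $u_3\mapsto y = x_3 - v_3(t-s) - u_3(s-s')$, with Jacobian $(s-s')^{-1}\le\epsilon^{-1}$. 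By Cauchy--Schwarz in $(y,u')$ this piece is bounded by $C_{N,\epsilon}\sup_s \Vert \hat f(s,k)\Vert_{L^2_{x_3,v}}$, multiplied by the time weights.
\end{itemize}

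The point to check is unboundedness in $x_3$. The new variable $y$ ranges over $(0,\infty)$, but the change of variables produces an $L^2_{x_3}$ norm over the whole half-line. This is exactly why the $L^2_{x_3}$ norm, rather than $L^\infty_{x_3}\subset L^2_{x_3}$, is used. The other two velocity components stay bounded by the cutoff, so no Poincaré inequality is needed. Boundary iterates are handled in the same way, using the $\sqrt{\mu}$ factor to integrate the outgoing velocity.

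\textbf{Closing.} Collecting terms, for each $k$,
\[
\sup_{t\le T}(1+t)^{\sigma/2}\Vert h(t,k)\Vert_{L^\infty_{x_3,v}} \le o(1)(\cdot) + C\big(\Vert w\hat f_0(k)\Vert + \Vert(1+t)^{\sigma/2}\hat f(k)\Vert_{L^\infty_TL^2} + \text{conv}(k)\big).
\]
We absorb the $o(1)$ term into the left-hand side and integrate in $k$ over $\mathbb{R}^2$, which yields the lemma.
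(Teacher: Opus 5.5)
Your proposal is correct and takes the same route as the paper. The paper simply invokes the standard Guo-type $L^2$--$L^\infty$ argument along the $x_3$-characteristic at each fixed $k$ (as in Proposition 6 of \cite{chenduanzhang2024}), noting that it simplifies in the half-space because a backward trajectory meets the wall at most once; your $k_0$-bounce/small-measure discussion is therefore superfluous, and in the $L^2$ conversion it is the cutoff $|u_3|\le N$, which confines $y$ to an interval of length at most $2N(s-s')$, that lets Cauchy--Schwarz close despite $x_3$ being unbounded.
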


\begin{proof}
With the $L^\infty_T L^2_{x_3,v}$ estimate in Lemma \ref{lemma:full_energy_decayexg}, the proof of this lemma follows from a standard $L^2_{x_3,v}-L^\infty_{x_3,v}$ argument, which is the same as Proposition 6 in \cite{chenduanzhang2024} . In fact, the proof of this lemma would be simpler, since the characteristic only interacts with the boundary once. We omit the detail.
\end{proof}

In the following, we summarize previous results and obtain the a priori estimate, which is devoted to Theorem \ref{thm:l1k_lpkexg}.
\begin{proposition}\label{prop:aprioi_fxmexg}
Let $\hat{f}$ be the solution to \eqref{f_eqn_r2} such that the initial condition $f_0$ satisfy \eqref{initial_assumptionexg}, and 
\begin{align}\label{ad.prop.apexg}
 &\Vert (1+t)^{\sigma/2} \hat{f}\Vert_{L^1_k L^\infty_T L^2_{x_3,v}}+\Vert \hat{f}\Vert_{L^p_k L^\infty_T L^2_{x_3,v}}+\Vert (1+t)^{\sigma/2} w\hat{f}\Vert_{L^1_k L^\infty_{T,x_3,v}}<\infty,
\end{align}
then for some $C>1$,
\begin{align*}
 & \Vert (1+t)^{\sigma/2} \hat{f}\Vert_{L^1_k L^\infty_T L^2_{x_3,v}}+ \Vert (1+t)^{\sigma/2} w\hat{f} \Vert_{L^1_k L^\infty_{T,x_3,v}} + \Vert \hat{f}\Vert_{L^p_k L^\infty_T L^2_{x_3,v}} \\
& \leq C\big[ \Vert \hat{f}_0\Vert_{L^1_k L^2_{x_3,v}} +\Vert w\hat{f}_0\Vert_{L^1_k L^\infty_{x_3,v}} + \Vert \hat{f}_0\Vert_{L^p_k L^2_{x_3,v}} \\
& + \big(\Vert (1+t)^{\sigma/2} \hat{f}\Vert_{L^1_k L^\infty_T L^2_{x_3,v}}+ \Vert (1+t)^{\sigma/2} w\hat{f} \Vert_{L^1_k L^\infty_{T,x_3,v}} + \Vert \hat{f}\Vert_{L^p_k L^\infty_T L^2_{x_3,v}}\big)^2\big] .
\end{align*}
\end{proposition}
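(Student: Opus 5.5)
The plan is to combine Lemma \ref{prop:full energyexg} (the $L^p_k$ part), Lemma \ref{lemma:full_energy_decayexg} and Lemma \ref{lemma:linftyexg}, and to reduce every nonlinear right-hand side to a product of the three quantities in \eqref{ad.prop.apexg}. Denote by $\mathcal{X}(T)$ the sum of the three norms on the left-hand side. First, inserting the bound of Lemma \ref{lemma:full_energy_decayexg} for $\Vert (1+t)^{\sigma/2}\hat f\Vert_{L^1_kL^\infty_TL^2_{x_3,v}}$ into the right-hand side of Lemma \ref{lemma:linftyexg} controls the weighted $L^\infty$ norm by initial data plus nonlinear terms. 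Adding the $L^p_k$ estimate from Lemma \ref{prop:full energyexg} then gives $\mathcal{X}(T)\lesssim$ (initial data) $+ N_1+N_2+N_3$, where $N_1,N_2$ are the two convolution terms of Lemma \ref{lemma:full_energy_decayexg} (in $L^1_k$ and $L^p_k$) and $N_3$ is the $L^\infty$ convolution term of Lemma \ref{lemma:linftyexg}.

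$N_3$ is immediate. By Young's inequality $L^1_k*L^1_k\subset L^1_k$, it is bounded by $\Vert (1+t)^{\sigma/2}w\hat f\Vert_{L^1_kL^\infty_{T,x_3,v}}^2\le\mathcal{X}^2$. For $N_1$ and $N_2$ the key step is to bound $\Vert\hat f(\ell)\Vert_{L^2_TL^\infty_{x_3}L^2_\nu}$. Since $w=e^{\theta|v|^2}$ absorbs $\nu^{1/2}$, we have $\Vert\hat f(\ell)\Vert_{L^\infty_{x_3}L^2_\nu}\lesssim\Vert w\hat f(\ell)\Vert_{L^\infty_{x_3,v}}$. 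Hence
\begin{equation*}
\Vert\hat f(\ell)\Vert_{L^2_TL^\infty_{x_3}L^2_\nu}\lesssim\Big(\int_0^T(1+t)^{-\sigma}\,\dd t\Big)^{1/2}\Vert(1+t)^{\sigma/2}w\hat f(\ell)\Vert_{L^\infty_{T,x_3,v}},
\end{equation*}
and the time integral is finite uniformly in $T$ precisely because $\sigma>1$. This is where the hypothesis $\sigma=2(1-1/p)-2\e>1$, i.e. $p>2$, enters.

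With this bound, $N_1\le\Vert\,\Vert(1+t)^{\sigma/2}\hat f\Vert_{L^\infty_TL^2}\,*_k\,\Vert(1+t)^{\sigma/2}w\hat f\Vert_{L^\infty}\Vert_{L^1_k}\lesssim\mathcal{X}^2$, again by $L^1*L^1\subset L^1$. For $N_2$ we use Young's inequality $L^p_k*L^1_k\subset L^p_k$, putting $\hat f(k-\ell)$ in $L^p_kL^\infty_TL^2_{x_3,v}$ and $\hat f(\ell)$ in the weighted $L^1_kL^\infty$ norm; this is also bounded by $\mathcal{X}^2$. The $o(1)$ microscopic terms produced by \eqref{gamma_product} inside the lemmas have already been absorbed there, so collecting everything yields the stated inequality with some $C>1$.

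The only delicate point I anticipate is the bookkeeping: the $L^\infty$ lemma must feed on the time-weighted $L^2$ norm, and the $L^p_k$ norm carries no time weight. Both are handled by bounding the weighted $L^2_T$ factor through $\sigma>1$ as above. Finiteness \eqref{ad.prop.apexg} is used only to ensure that all quantities are finite, so that the inequality is meaningful and can later be closed by a continuity argument.
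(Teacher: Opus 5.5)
Your proposal is correct and matches the paper's proof: the paper also combines Lemmas \ref{prop:full energyexg}, \ref{lemma:full_energy_decayexg} and \ref{lemma:linftyexg}. It bounds $\Vert w\hat f\Vert_{L^1_kL^2_TL^\infty_{x_3,v}}$ by $\Vert(1+t)^{\sigma/2}w\hat f\Vert_{L^1_kL^\infty_{T,x_3,v}}$ using $\sigma>1$, and closes every $\ell$-convolution with Young's inequality, exactly as you do. One small correction: $\sigma>1$ is a standing assumption, and it amounts to $p>2/(1-2\varepsilon)$, which is slightly stronger than $p>2$.
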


\begin{proof}
Combining Lemma \labelcref{prop:full energyexg,lemma:full_energy_decayexg,lemma:linftyexg}, we obtain
 \begin{align}
 & \Vert (1+t)^{\sigma/2} \hat{f}\Vert_{L^1_k L^\infty_T L^2_{x_3,v}}+ \Vert (1+t)^{\sigma/2} w\hat{f} \Vert_{L^1_k L^\infty_{T,x_3,v}} + \Vert \hat{f}\Vert_{L^p_k L^\infty_T L^2_{x_3,v}} \notag\\
 & \leq C\big[\Vert \hat{f}_0\Vert_{L^1_k L^2_{x_3,v}} + \Vert w\hat{f}_0\Vert_{L^1_k L^\infty_{x_3,v}} + \Vert \hat{f}_0\Vert_{L^p_k L^2_{x_3,v}} + \Vert (1+t)^{\sigma/2} w \hat f\Vert_{L^1_kL^\infty_{T,x_3,v}}^2\notag\\
 & + \Vert (1+t)^{\sigma/2} \hat{f}\Vert_{L^1_k L^\infty_T L^2_{x_3,v}} \Vert w \hat{f}\Vert_{L^1_k L^2_T L^\infty_{x_3,v}} +\Vert \hat{f}\Vert_{L^p_k L^\infty_T L^2_{x_3,v}} \Vert w \hat{f}\Vert_{L^1_k L^2_T L^\infty_{x_3,v}}\big]  \notag \\
 & \leq C\big[\Vert \hat{f}_0\Vert_{L^1_k L^2_{x_3,v}} + \Vert w\hat{f}_0\Vert_{L^1_k L^\infty_{x_3,v}} + \Vert \hat{f}_0\Vert_{L^p_k L^2_{x_3,v}} + \Vert (1+t)^{\sigma/2} w \hat f\Vert_{L^1_kL^\infty_{T,x_3,v}}^2\notag\\
 & + \Vert (1+t)^{\sigma/2} \hat{f}\Vert_{L^1_k L^\infty_T L^2_{x_3,v}} \Vert (1+t)^{\sigma/2}w \hat{f}\Vert_{L^1_k L^\infty_{T,x_3,v}} +\Vert \hat{f}\Vert_{L^p_k L^\infty_T L^2_{x_3,v}} \Vert (1+t)^{\sigma/2} w \hat{f}\Vert_{L^1_k L^\infty_{T,x_3,v}}\big] \notag.
\end{align}
In the first inequality, we have applied Young's convolution inequality for all $\ell$ integrations. In the second inequality, we have applied $\sigma>1$ for the $L^2_T$ integral. We complete the proof.
\end{proof}

\begin{remark}\label{remark:L2_Linfty}
In \cite{chenduanzhang2024}, since the vertical variable is bounded as $x_3\in (-1,1)$, one directly has $\Vert (1+t)^{\sigma/2} f \Vert_{L^1_k L^\infty_{T} L^2_{(-1,1),v}} \lesssim \Vert (1+t)^{\sigma/2} w \hat{f}\Vert_{L^1_k L^\infty_{T,(-1,1),v}}$. In our problem, since $x_3$ is unbounded, we need to close the estimate in $L^2_{x_3}\cap L^\infty_{x_3}$ as \eqref{ad.prop.apexg}.

\end{remark}

\begin{proof}[\textbf{Proof of Theorem \ref{thm:l1k_lpkexg}}]
With the a priori estimate in Proposition \ref{prop:aprioi_fxmexg}, we can apply the standard sequential argument to construct a unique solution to \eqref{f_eqn_r2} that satisfies \eqref{f_estimateexg} %and \eqref{f_estimate_2}
. The positivity also follows from a standard sequential argument; we refer to details in \cite{MR4688691}. Note that the a priori assumption \eqref{ad.prop.apexg} can be closed due to the smallness of initial data as in \eqref{initial_assumption}. 
\end{proof}

\section{Global analytic regularity in $x_\parallel$ over $\mathbb{R}^2\times \mathbb{R}^+$}\label{anainx}

In this section, we will establish the propagation of analytic in $x_\parallel$ estimate in the domain of $\mathbb{R}^2\times\mathbb{R}^+$. Recall $L_{\rho,m,|\alpha|}$ defined in \eqref{r2dalp}. In this subsection, we start from the analytic $x_\parallel$ estimate; thus we set $|\alpha| = 0$.

\begin{theorem}\label{thminxm}
Let $\hat{f}$ be the solution to \eqref{f_eqn_r2} such that the initial data satisfies 
\begin{equation}
\sup_{m\geq 0} L_{\rho,m,0}\Big[\Vert \avk^{m}\hat{f}_0\Vert_{L^1_k L^2_{x_3,v}} +\Vert\avk^{m} w\hat{f}_0\Vert_{L^1_k L^\infty_{x_3,v}} + \Vert \avk^{m}\hat{f}_0\Vert_{L^p_k L^2_{x_3,v}}\Big] < \delta_0 \label{iniassxxx}
\end{equation}
for some $\rho>0$, then the following estimate is satisfied
\begin{align}\label{f_estxxx}
 & \sup_{m\geq 0} L_{\rho,m,0} \Big[\Vert (1+t)^{\sigma/2}\avk^m \hat{f}\Vert_{L^1_k L^\infty_T L^2_{x_3,v}} + \Vert \avk^m\hat{f}\Vert_{L^p_k L^\infty_T L^2_{x_3,v}} + \Vert (1+t)^{\sigma/2} \avk^{m} w\hat{f}\Vert_{L^1_k L^\infty_{T,x_3,v}}\Big]\notag
 \\& \leq C\sup_{m\geq 0} L_{\rho,m,0}\Big[\Vert \avk^{m}\hat{f}_0\Vert_{L^1_k L^2_{x_3,v}} +\Vert \avk^{m}w\hat{f}_0\Vert_{L^1_k L^\infty_{x_3,v}} + \Vert \avk^{m}\hat{f}_0\Vert_{L^p_k L^2_{x_3,v}}\Big]. 
 \end{align}

\end{theorem}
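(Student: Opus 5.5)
The plan is to treat each $\avk^m\hat f$ as a solution of the same linear problem with a modified source, rerun the three estimates of Section \ref{sec:withoutder}, and then sum over $m$ with the weights $L_{\rho,m,0}$.

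\textbf{Step 1: structure of the equation for $\avk^m\hat f$.} Multiplication by $\avk^m$ commutes with $iv_\parallel\cdot k$, with $v_3\p_{x_3}$, with $\mathcal L$ (which acts only in $v$), and with the diffuse boundary operator $P_\gamma$ (which acts only in $v$ at $x_3=0$). Hence $\hat f_m:=\avk^m\hat f$ solves \eqref{f_eqn_r2}, with the nonlinearity replaced by $\avk^m\hat\Gamma(\hat f,\hat f)$ and with initial data $\avk^m\hat f_0$.

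\textbf{Step 2: linear estimates for each $m$.} Lemmas \ref{prop:full energyexg}, \ref{lemma:full_energy_decayexg} and \ref{lemma:linftyexg} use only the linear structure. The degenerate macroscopic estimate \eqref{degenerate_macro}, the weighted time-decay argument, and the $L^2$--$L^\infty$ characteristic argument therefore apply verbatim to $\hat f_m$. The source only enters through the pairing $\int \avk^m\hat\Gamma\,\overline{\hat f_m}$ in the $L^2$ part and through $\nu^{-1}w\avk^m\hat\Gamma$ in the $L^\infty$ part.

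\textbf{Step 3: splitting the nonlinear term by Leibniz.} Since $\avk\le \langle k-\ell\rangle+\avl$, we have
\begin{equation*}
\avk^m\le\sum_{j=0}^m\binom{m}{j}\langle k-\ell\rangle^{m-j}\avl^{j}.
\end{equation*}
Inside the $\ell$-convolution this gives a pointwise bound of $|\avk^m\hat\Gamma(\hat f,\hat f)|$, or at least of the bilinear forms controlled via \eqref{gamma_product} and the $L^\infty$ bound for $\Gamma$, by
\begin{equation*}
\sum_j\binom{m}{j}\,\hat\Gamma\text{-type expressions in }\big(\langle k-\ell\rangle^{m-j}|\hat f(k-\ell)|,\ \avl^{j}|\hat f(\ell)|\big).
\end{equation*}
The upper bounds in Lemma 3.6 (the $\Gamma$ estimates from \cite{chenduanzhang2024}) are all in terms of norms of the two arguments inside a convolution. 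So for each $m$ we get the analogue of Proposition \ref{prop:aprioi_fxmexg}, schematically
\begin{equation*}
E_m\le C\Big[E_m(0)+\sum_{j=0}^m\binom{m}{j}E_{m-j}E_j\Big],
\end{equation*}
where $E_m$ is the bracket in \eqref{f_estxxx} at level $m$. In the product terms, one factor sits in $L^p_k$ or $L^1_k$ with the time weight and the other in $L^1_kL^2_TL^\infty_{x_3,v}$; the latter is absorbed using $\sigma>1$, exactly as before.

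\textbf{Step 4: summing in $m$ and closing.} Multiply by $L_{\rho,m,0}$ and write
\begin{equation*}
L_{\rho,m,0}\binom{m}{j}E_{m-j}E_j=\frac{\binom{m}{j}L_{\rho,m,0}}{L_{\rho,j,0}L_{\rho,m-j,0}}\,(L_{\rho,m-j,0}E_{m-j})(L_{\rho,j,0}E_j).
\end{equation*}
By \eqref{sumlrhom0}, the sum over $j$ of the prefactor is bounded by $C$; there the factor $\rho^{-1}$ is fixed and absorbed into $C$. With $\mathcal E:=\sup_m L_{\rho,m,0}E_m$, this yields
\begin{equation*}
\mathcal E\le C\big[\mathcal E(0)+\mathcal E^2\big].
\end{equation*}
A continuity argument under the smallness \eqref{iniassxxx} then gives \eqref{f_estxxx}. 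The finiteness of $\mathcal E$ needed for the continuity argument can be arranged by first working with truncated sums $m\le M$ (or smooth cutoffs in $k$) and then letting $M\to\infty$, since all constants are uniform in $M$.

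\textbf{Main obstacle.} The delicate point is Step 3: checking that the bilinear estimates, in particular the $L^2$ pairing \eqref{gamma_product} with the $o(1)$ absorption of $(\mathbf I-\mathbf P)\hat f_m$, hold after distributing the weight. I would handle this by bounding $|\avk^m\hat\Gamma|$ pointwise in $(k,x_3,v)$ by the Leibniz sum with absolute values, and by noting that the proofs in \cite{chenduanzhang2024} only use the convolution structure in $\ell$ together with velocity bounds. A second point to check is that $o(1)$ coefficients are uniform in $m$, so that the absorption survives the supremum. The constant $(16/\nu_0)^m$ in $L_{\rho,m,0}$ plays no role here, because it cancels in the ratio of \eqref{sumlrhom0}.
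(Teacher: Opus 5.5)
Your proposal is correct and follows the paper's proof essentially step by step: rerun the three linear estimates of Section \ref{sec:withoutder} for $\avk^m\hat f$, split $\avk^m$ via \eqref{klm} inside the scalar $\ell$-convolution of norms, apply Young's inequality with $\sigma>1$, and sum with \eqref{sumlrhom0} to reach $\mathcal E\le C[\mathcal E(0)+\mathcal E^2]$. The ``main obstacle'' you flag is easier than you expect, because $\avk^m$ is a scalar in $k$: the paper applies the bilinear bounds such as \eqref{gamma_product} first and only then distributes the weight over the resulting norms, so no pointwise absolute-value majorant of $\hat\Gamma$ is needed (such a majorant would in fact destroy the $(\mathbf I-\mathbf P)$ structure of the pairing).
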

\begin{remark}\label{rmk:analytic_x}
Theorem \ref{thminxm} implies that with the analytic in $x_\parallel$ assumption on the initial data, the unique solution constructed in Theorem \ref{thm:l1k_lpkexg} becomes analytic in $x_\parallel$ as well. Moreover, the analytic radius is consistent with the initial data.
\end{remark}

We consider the problem with $x_\parallel$ derivative:
\begin{align}
\begin{cases}
  & \p_t \langle k\rangle^m \hat{f} + i k\cdot v_\parallel \langle k\rangle^m\hat{f} + v_3 \p_{x_3} \langle k\rangle^m\hat{f} + \mathcal{L}\langle k\rangle^m\hat{f} = \langle k\rangle^m\hat{\Gamma}(\hat{f},\hat{f}), \\
 & \langle k\rangle^m \hat{f}|_{\gamma_-} = P_\gamma \langle k\rangle^m \hat{f}, \\
 & \langle k\rangle^m\hat{f}(0,k,x_3,v) = \langle k\rangle^m f_0(k,x_3,v). 
\end{cases}\label{kmf_eqn}
\end{align}

Since the structure of \eqref{kmf_eqn} is almost the same as \eqref{f_eqn_r2}, the well-posedness of the problem \eqref{kmf_eqn} can be justified using the same argument as in Section \ref{sec:withoutder}; thus we only focus on the following a priori estimate:

\begin{proposition}\label{prop:aprioi_fxm}
Let $\langle k\rangle^m\hat{f}$ be the solution to \eqref{kmf_eqn}, and suppose
\begin{align*}
 \sup_{m\geq 0} L_{\rho,m,0}\Big[\Vert (1+t)^{\sigma/2}\avk^{m}\hat{f}\Vert_{L^1_k L^\infty_T L^2_{x_3,v}}+ \Vert (1+t)^{\sigma/2}\avk^{m} w\hat{f} \Vert_{L^1_k L^\infty_{T,x_3,v}} + \Vert \avk^{m}\hat{f}\Vert_{L^p_k L^\infty_T L^2_{x_3,v}} \Big]<\infty,
 \end{align*}
then for some $C>1$, it holds
\begin{align*}
 & \sup_{m\geq 0} L_{\rho,m,0}\Big[\Vert (1+t)^{\sigma/2}\avk^{m}\hat{f}\Vert_{L^1_k L^\infty_T L^2_{x_3,v}}+ \Vert (1+t)^{\sigma/2}\avk^{m} w\hat{f} \Vert_{L^1_k L^\infty_{T,x_3,v}} + \Vert \avk^{m}\hat{f}\Vert_{L^p_k L^\infty_T L^2_{x_3,v}} \Big]\\
& \leq C \sup_{m\geq 0} L_{\rho,m,0}\Big[\Vert \avk^{m}\hat{f}_0\Vert_{L^1_k L^2_{x_3,v}} + \Vert \avk^{m}w\hat{f}_0\Vert_{L^1_k L^\infty_{x_3,v}} + \Vert \avk^{m}\hat{f}_0\Vert_{L^p_k L^2_{x_3,v}}\Big]\\
& + C\bigg[\sup_{m\geq 0} L_{\rho,m,0}\Big[\Vert (1+t)^{\sigma/2}\avk^{m}\hat{f}\Vert_{L^1_k L^\infty_T L^2_{x_3,v}}+ \Vert (1+t)^{\sigma/2}\avk^{m} w\hat{f} \Vert_{L^1_k L^\infty_{T,x_3,v}} + \Vert \avk^{m}\hat{f}\Vert_{L^p_k L^\infty_T L^2_{x_3,v}} \Big]\bigg]^2.
\end{align*}
\end{proposition}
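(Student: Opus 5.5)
Since $\avk^m$ is a scalar multiplier in $k$, the problem \eqref{kmf_eqn} has exactly the structure of \eqref{f_eqn_r2}: the transport term, $\mathcal{L}$, and the diffuse boundary operator $P_\gamma$ all commute with $\avk^m$. Only the nonlinear term changes. The plan is therefore to rerun Lemmas \ref{prop:full energyexg}, \ref{lemma:full_energy_decayexg} and \ref{lemma:linftyexg} for $g_m:=\avk^m\hat f$ with forcing $\avk^m\hat\Gamma(\hat f,\hat f)$ in place of $\hat\Gamma(\hat f,\hat f)$. This gives, for each fixed $m$,
\begin{align*}
&\Vert (1+t)^{\sigma/2}g_m\Vert_{L^1_kL^\infty_TL^2_{x_3,v}}+\Vert (1+t)^{\sigma/2}wg_m\Vert_{L^1_kL^\infty_{T,x_3,v}}+\Vert g_m\Vert_{L^p_kL^\infty_TL^2_{x_3,v}}\\
&\lesssim \text{(initial data of }g_m)+\mathcal{N}_m,
\end{align*}
where $\mathcal{N}_m$ collects the convolution-type nonlinear terms of those lemmas, now with the weight $\avk^m$ on the output frequency.

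To treat $\mathcal{N}_m$, I will use $\avk\le \langle k-\ell\rangle+\avl$, which gives
\begin{equation*}
\avk^m\le \sum_{j=0}^m\binom{m}{j}\avkl^{m-j}\avl^{j}.
\end{equation*}
The factor $\avkl^{m-j}$ goes onto $\hat f(k-\ell)$ and $\avl^{j}$ onto $\hat f(\ell)$. The bilinear estimates (the $\hat\Gamma$ lemma and \eqref{gamma_product}) act pointwise in $(k-\ell,\ell)$, so every nonlinear term is bounded by
\begin{equation*}
\sum_{j}\binom{m}{j}\int \Vert \avkl^{m-j}\hat f(k-\ell)\Vert\,\Vert\avl^{j}\hat f(\ell)\Vert\,d\ell
\end{equation*}
in the appropriate norms. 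Young's convolution inequality ($L^1*L^1\subset L^1$ and $L^p*L^1\subset L^p$) then reduces each such term to products of the single-index norms for $m-j$ and $j$. For the factor in $L^2_TL^\infty_{x_3}L^2_\nu$ I will use $\sigma>1$, as in Proposition \ref{prop:aprioi_fxmexg}, to bound it by $\Vert(1+t)^{\sigma/2}w\avl^j\hat f\Vert_{L^1_kL^\infty_{T,x_3,v}}$.

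Multiplying by $L_{\rho,m,0}$ and inserting $L_{\rho,m-j,0}L_{\rho,j,0}$ gives
\begin{equation*}
L_{\rho,m,0}\,\mathcal{N}_m\lesssim \sum_{j=0}^m\binom{m}{j}\frac{L_{\rho,m,0}}{L_{\rho,j,0}L_{\rho,m-j,0}}\,\mathcal{E}^2\le C\,\mathcal{E}^2
\end{equation*}
by \eqref{sumlrhom0}, where $\mathcal{E}$ denotes the supremum over $m$ appearing in the statement. Taking the supremum in $m$ closes the estimate.

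The main point to check is that the linear estimates are uniform in $m$, i.e. that their constants do not depend on $m$. This holds because the equation for $g_m$ is the same linear problem as before. One caveat: the degenerate macroscopic estimate \eqref{degenerate_macro} and the time-weighted interpolation of Lemma \ref{lemma:full_energy_decayexg} use the $L^p_k$ norm of $g_m$, not of $\hat f$. This is consistent, since the $L^p_k$ norm of $\avk^m\hat f$ is carried in the norm and $\avk^m$ is harmless at low frequency. Once these uniform linear bounds are in place, the binomial/Leibniz step is exactly what \eqref{sumlrhom0} was designed for; its constant carries a factor $\rho^{-1}$, which is absorbed into $C$.
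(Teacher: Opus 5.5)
Your proposal is correct and follows the paper's proof essentially step for step. The shared steps are: rerun Lemmas \ref{prop:full energyexg}--\ref{lemma:linftyexg} for $\avk^m\hat f$ to get \eqref{apriestmxxr2}; split $\avk^m$ with the binomial bound \eqref{klm}; apply Young's convolution inequality, using $\sigma>1$ and $w^{-1}\in L^2_v$ for the $L^2_TL^\infty_{x_3}L^2_\nu$ factor; and sum with \eqref{sumlrhom0}. The only differences are cosmetic: you put $\avkl^{m-j}$ rather than $\avkl^{j}$ on the first factor, and you state explicitly the $\rho^{-1}$ dependence of the constant, which the paper leaves implicit.
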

\begin{proof}
Since \eqref{kmf_eqn} has the same structure as the equation of $\hat{f}$ in \eqref{f_eqn_r2}, we can follow the same argument as Lemma \ref{prop:full energyexg} to \ref{lemma:linftyexg} and obtain
\begin{align}\label{apriestmxxr2}
 & L_{\rho,m,0}\big[\Vert (1+t)^{\sigma/2} \avk^{m}\hat{f}\Vert_{L^1_k L^\infty_T L^2_{x_3,v}}+ \Vert (1+t)^{\sigma/2}\avk^{m} w\hat{f} \Vert_{L^1_k L^\infty_{T,x_3,v}} + \Vert \avk^{m}\hat{f}\Vert_{L^p_k L^\infty_T L^2_{x_3,v}} \big]\notag\\
& \leq C L_{\rho,m,0}\Big[\Vert \avk^{m}\hat{f}_0\Vert_{L^1_k L^2_{x_3,v}} + \Vert \avk^{m}w\hat{f}_0\Vert_{L^1_k L^\infty_{x_3,v}} + \Vert \avk^{m}\hat{f}_0\Vert_{L^p_k L^2_{x_3,v}} \notag\\
 & + \Big\Vert \avk^{m}\int_{\mathbb{R}^2} \Vert \hat{f}(k-\ell)\Vert_{L^\infty_T L^2_{x_3,v}} \Vert \hat{f}(\ell)\Vert_{L^2_T L^\infty_{x_3}L^2_\nu} \dd \ell \Big\Vert_{L^p_k} \notag\\
 & + \Big\Vert \avk^{m}\int_{\mathbb{R}^2} \Vert (1+t)^{\sigma/2} \hat{f}(k-\ell)\Vert_{L^\infty_T L^2_{x_3,v}} \Vert \hat{f}(\ell)\Vert_{L^2_T L^\infty_{x_3}L^2_\nu} \dd \ell \Big\Vert_{L^1_k} \notag\\
 & + \Big\Vert \avk^{m}\int_{\mathbb{R}^2}\Vert (1+t)^{\sigma/2} w\hat{f}(k-\ell)\Vert_{L^\infty_{T,x_3,v}} \Vert (1+t)^{\sigma/2} w\hat{f}(\ell) \Vert_{L^\infty_{T,x_3,v}} \dd \ell \Big\Vert_{L^1_k} \Big].
\end{align}

We need to estimate the last three nonlinear terms in \eqref{apriestmxxr2}. Then we control $\langle k\rangle^m$ as 
\begin{equation}\label{klm}
 \avk^m\leq (\avkl+\avl)^m = \sum_{j=0}^{m}\binom{m}{j}\avkl^{j} \avl^{m-j}.
\end{equation}

For the first nonlinear term, we apply \eqref{klm}, the Young's convolution inequality and $\sigma>1,w^{-1}\in L^2_v$ to obtain
\begin{align*}
&\Big\Vert \avk^{m}\int_{\mathbb{R}^2} \Vert \hat{f}(k-\ell)\Vert_{L^\infty_T L^2_{x_3,v}} \Vert \hat{f}(\ell)\Vert_{L^2_T L^\infty_{x_3}L^2_\nu} \dd \ell \Big\Vert_{L^p_k}\\
 & \lesssim \Big\Vert \sum_{j=0}^m \binom{m}{j} \int_{\mathbb{R}^2} \Vert \langle k-\ell\rangle^{j}\hat{f}(k-\ell)\Vert_{L^\infty_T L^2_{x_3,v}} \Vert \langle \ell\rangle^{m-j}\hat{f}(\ell)\Vert_{L^2_T L^\infty_{x_3}L^2_\nu} \dd \ell \Big\Vert_{L^p_k}\\
 &\lesssim \sum_{j=0}^{m}\binom{m}{j}\Vert \avk^{j}\hat{f}\Vert_{L^p_k L^\infty_T L^2_{x_3,v}}\Vert (1+t)^{\sigma/2}\avk^{m-j}w\hat{f}\Vert_{L^1_k L^\infty_{T,x_3,v} }.
\end{align*} 

Including the analytic weight, we further have
\begin{align}\label{estrr2tirpk}
& L_{\rho,m,0} \sum_{j=0}^{m}\binom{m}{j}\Vert \avk^{j}\hat{f}\Vert_{L^p_k L^\infty_T L^2_{x_3,v}}\Vert (1+t)^{\sigma/2} \avk^{m-j}w\hat{f}\Vert_{L^1_k L^\infty_{T, x_3,v} }\notag \\
& \lesssim \sum_{j=0}^{m}\binom{m}{j}\frac{ L_{\rho,m,0}}{L_{\rho,j,0}L_{\rho,m-j,0}}\Big(L_{\rho,j,0}\Vert \avk^{j}\hat{f} \Vert_{L^p_k L^\infty_T L^2_{x_3,v}}\Big)\Big( L_{\rho,m-j,0}\Vert (1+t)^{\sigma/2} \avk^{m-j}w\hat{f}\Vert_{L^1_k L^\infty_{T, x_3,v} }\Big) \notag 
\\
& \lesssim \bigg(\sup_{m\geq 0} L_{\rho,m,0} \Vert \avk^m\hat{f}\Vert_{L^p_k L^\infty_T L^2_{x_3,v}}\bigg)\bigg(\sup_{m\geq 0} L_{\rho,m,0}\Vert(1+t)^{\sigma/2} w\avk^m\hat{f}\Vert_{L^1_k L^\infty_{T, x_3,v}}\bigg).
\end{align}
In the last inequality, we have applied \eqref{sumlrhom0}.

Applying a similar computation to the second nonlinear term in the RHS of \eqref{apriestmxxr2}, we obtain 
 \begin{align}\label{gamma_estxsec}
 & L_{\rho,m,0} \Big\Vert \avk^{m}\int_{\mathbb{R}^2} \Vert (1+t)^{\sigma/2} \hat{f}(k-\ell)\Vert_{L^\infty_T L^2_{x_3,v}} \Vert \hat{f}(\ell)\Vert_{L^2_T L^\infty_{x_3}L^2_\nu} \dd \ell \Big\Vert_{L^1_k} \notag\\
 &\lesssim \bigg(\sup_{m\geq 0} L_{\rho,m,0} \Vert(1+t)^{\sigma/2} \avk^m\hat{f}\Vert_{L^1_k L^\infty_T L^2_{x_3,v}}\bigg)\bigg(\sup_{m\geq 0} L_{\rho,m,0}\Vert(1+t)^{\sigma/2} w\avk^m\hat{f}\Vert_{L^1_k L^\infty_{T, x_3,v}}\bigg) .
\end{align}

Finally, we apply \eqref{klm} and Young's convolutional inequality to compute the third nonlinear term in \eqref{apriestmxxr2}:
\begin{align}
& L_{\rho, m,0} \Big\Vert \avk^{m}\int_{\mathbb{R}^2}\Vert (1+t)^{\sigma/2} w\hat{f}(k-\ell)\Vert_{L^\infty_{T,x_3,v}} \Vert (1+t)^{\sigma/2} w\hat{f}(\ell) \Vert_{L^\infty_{T,x_3,v}} \dd \ell \Big\Vert_{L^1_k}\notag\\ 
& \lesssim L_{\rho, m,0} \sum_{j=0}^{m}\binom{m}{j}\Vert (1+t)^{\sigma/2}\avk^{j} w \hat{f}\Vert_{ L^1_kL^\infty_{T,x_3,v}}\Vert (1+t)^{\sigma/2}\avk^{m-j} w \hat{f}\Vert_{ L^1_k L^\infty_{T,x_3,v}}\notag\\
&\lesssim \sum_{j=0}^{m}\binom{m}{j}\frac{L_{\rho,m,0}}{L_{\rho,j,0}L_{\rho,m-j,0}}\Big(L_{\rho,j,0}\Vert (1+t)^{\sigma/2}\avk^{j} w \hat{f}\Vert_{ L^1_kL^\infty_{T,x_3,v}}\Big) \notag\\
&\times\Big(L_{\rho,m-j,0}\Vert (1+t)^{\sigma/2}\avk^{m-j} w \hat{f}\Vert_{ L^1_k L^\infty_{T,x_3,v}}\Big)\notag\\
& \lesssim \bigg( \sup_{m\geq 0} L_{\rho, m,0}\Vert (1+t)^{\sigma/2}\avk^{m} w \hat{f}\Vert_{ L^1_kL^\infty_{T,x_3,v}}\bigg)^2 \label{gamma_estxthi}. 
\end{align}
In the last line, we have applied \eqref{sumlrhom0}.

Finally, substituting \eqref{estrr2tirpk}, \eqref{gamma_estxsec} and \eqref{gamma_estxthi} into \eqref{apriestmxxr2}, we conclude the proof of Proposition \ref{prop:aprioi_fxm}.
\end{proof}

\begin{proof}[\textbf{Proof of Theorem \ref{thminxm}}]
With the a priori estimate in Proposition \ref{prop:aprioi_fxm}, under the small data assumption in \eqref{iniassxxx}, it is standard to prove \eqref{f_estxxx}. We omit the details.
\end{proof}

\section{Global Gevrey regularity in $v_\parallel$ over $\mathbb{R}^2\times \mathbb{R}^+$}\label{anainmixy}

Due to the transport operator $v\cdot \nabla_x$, the $v_\parallel$ derivative estimate depends on the $x_\parallel$ derivative estimate. By employing the analytic in $x_\parallel$ estimate in Section \ref{anainx}, we investigate the Gevrey in $v_\parallel$ estimate in this section.

Our main goal is the following result.

\begin{theorem}\label{thm:apriestxvr2}
 Under the assumptions in Theorem \ref{gevr2}, the following estimate holds:
\begin{align*}
 &\sup_{m, \a\geq 0}L_{\rho, m, |\a|}
 \Vert (1+t)^{\sigma/2}w \avk^m\p_{v_\parallel}^{\a} \hat{ f}(t)\Vert_{ L^1_kL^\infty_{T, x_3,v}}\\&\leq 
C\bigg[\sup_{m, \a\geq 0}L_{\rho, m, |\a|} \Vert w \langle k \rangle ^{m}\p_{v_\parallel}^{\a}\hat{ f}_0 \Vert_{L^1_k L^\infty_{x_3,v}}+\sup_{m \geq 0}L_{\rho, m,0 }\Big(\Vert \avk^{m}\hat{f}_0\Vert_{L^1_k L^2_{x_3,v}} + \Vert \avk^{m}\hat{f}_0\Vert_{L^p_k L^2_{x_3,v}}\Big) \bigg]
\end{align*}
for any $T>0$. 
\end{theorem}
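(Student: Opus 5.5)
We argue by induction on $|\a|$, the case $|\a|=0$ being Theorem \ref{thminxm}. For fixed $(m,\a)$ with $|\a|\ge1$ we set $h:=\avk^m\p_{v_\parallel}^\a\hat f$ and differentiate \eqref{kmf_eqn} in $v_\parallel$. Writing $\mathcal L=\nu-K$, the equation for $h$ reads
\begin{align*}
\p_t h+ik\cdot v_\parallel h+v_3\p_{x_3}h+\nu h
=&-\sum_{\iota\le\a,|\iota|=1}\binom{\a}{\iota}\,ik_\iota\,\avk^m\p_{v_\parallel}^{\a-\iota}\hat f
-\sum_{0<\b\le\a}\binom{\a}{\b}(\p_{v_\parallel}^\b\nu)\avk^m\p_{v_\parallel}^{\a-\b}\hat f\\
&+\avk^m\p_{v_\parallel}^\a K\hat f+\avk^m\p_{v_\parallel}^\a\hat\Gamma(\hat f,\hat f).
\end{align*}
The boundary datum is $\avk^m\p_{v_\parallel}^\a P_\gamma\hat f$, where the derivative falls only on $\sqrt{\mu(v)}$ and is bounded via \eqref{kmu} by $8^{|\a|+1}|\a|!$ times the $L^\infty$ trace of $\avk^m\hat f$. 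Rather than differentiating the source terms term by term, we work directly with the mild formulation: $\p_{v_\parallel}^\a$ is applied to the Duhamel formula of $\avk^m\hat f$ along the backward characteristic, which meets $x_3=0$ at most once. The free part is $e^{-\nu(v)t}e^{-ik\cdot v_\parallel t}\avk^m\hat f_0(x_3-v_3t,v)$, and its $v_\parallel$ derivatives produce factors $(kt)^{\b}$, derivatives of $e^{-\nu t}$, and $\p^{\a-\b}_{v_\parallel}\hat f_0$. We bound $|k|^{|\b|}\le\avk^{|\b|}$, so these raise $m$ to $m+|\b|$; the time growth $t^{|\b|}$ is absorbed through Lemma \ref{lemma:enut_control}, $e^{-\nu t/8}t^{|\b|}\le(8/\nu_0)^{|\b|}|\b|!$, and derivatives of $e^{-\nu t}$ through Lemma \ref{lemma:deri_nu}. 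The remaining fraction of $e^{-\nu t}$ also pays for the weight $(1+t)^{\sigma/2}$.

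The bookkeeping rests on the identity
\[
\frac{L_{\rho,m,|\a|}}{L_{\rho,m+|\b|,|\a-\b|}}\,\binom{\a}{\b}(8/\nu_0)^{|\b|}|\b|!
\lesssim\frac{(m+|\a|)!\,|\a-\b|!}{(m+|\a|)!\,|\a|!}\,\binom{\a}{\b}\frac{|\b|!}{2^{|\b|}}\cdot\text{(poly ratio)},
\]
which uses $(16/\nu_0)^{|\b|}$ to dominate $(8/\nu_0)^{|\b|}$, leaving $2^{-|\b|}$. Since $\binom{\a}{\b}|\b|!|\a-\b|!\le|\a|!$ by \eqref{absab}, the sum over $\b$ is controlled by \eqref{abdiv2}. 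This is precisely why the index $(m+|\a|)!|\a|!$ in \eqref{r2dalp} is the correct Gevrey-2 scaling. The same bookkeeping handles the boundary contribution and the $\p^\b\nu$ terms, where \eqref{inmati} is used together with Lemma \ref{lemma:deri_nu}.

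The $K$ term is the main obstacle. Writing $\p_{v_\parallel}^\a=\p^\iota_{v_\parallel}(\p_{v_\parallel}+\p_{u_\parallel}-\p_{u_\parallel})^{\a-\iota}$, we expand binomially and integrate the $\p_{u_\parallel}^{\a-\iota-\tau}$ factors by parts onto $\hat f(u)$. This leaves the kernel $\p^\iota_{v_\parallel}(\p_{v_\parallel}+\p_{u_\parallel})^\tau\mathbf k$, which Lemma \ref{estdivk1ta} bounds by $C|\tau|!\nu(v)$. The resulting terms involve $\p_{v_\parallel}^{\a-\iota-\tau}\hat f$ with at most $|\a|-1$ derivatives, so they are controlled by the induction hypothesis and the $\nu(v)$ factor is absorbed by $\int_0^te^{-\nu(t-s)}\nu\,ds\le1$. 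The loss of one derivative costs a factor $L_{\rho,m,|\a|}/L_{\rho,m,|\a|-1}\sim\rho/|\a|^2\cdot(\text{bounded})$, and this is where the smallness $\rho\le1/(2C)$ makes the $K$ contribution a strict contraction in the $\sup$ norm. The $\tau$-sum is handled via \eqref{sum_tau_poly}. Derivatives on the transported $\hat f(s,k,x_3-v_3(t-s),u)$ in the $K$-Duhamel integral again create $(k(t-s))^\b$ factors, which are treated exactly as in the free part.

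For the nonlinear term we use \eqref{gamma_dexhk} together with $\avk^m\le\sum_j\binom{m}{j}\avkl^j\avl^{m-j}$. We then distribute $L_{\rho,m,|\a|}$ over the three pieces and sum, combining \eqref{sumlrhom0}, \eqref{able} and \eqref{sum_tau_poly}. The result is bounded by the square of the full $\sup$ norm, which is small. Collecting all contributions gives, for the full weighted norm $\mathcal E$, an estimate $\mathcal E\le C\,\mathcal E_0+\tfrac12\mathcal E+C\mathcal E^2$, where Theorem \ref{thminxm} supplies the $|\a|=0$ and $L^2$ parts. A continuity argument then closes the estimate, with the a priori finiteness justified by truncating at $|\a|\le N$ and letting $N\to\infty$.
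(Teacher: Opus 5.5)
Your proposal is correct and takes essentially the same route as the paper. The shared ingredients are:
\begin{itemize}
\item Duhamel along the backward characteristic, which meets the wall at most once;
\item Lemmas \ref{lemma:deri_nu} and \ref{lemma:enut_control}, which turn $v_\parallel$-derivatives of $e^{-\nu(v)t}e^{-iv_\parallel\cdot k t}$ into powers of $\avk$ at no cost in $\rho$;
\item for $K$, the splitting $\p_{v_\parallel}^{\a}=\p_{v_\parallel}^{\iota}(\p_{v_\parallel}+\p_{u_\parallel}-\p_{u_\parallel})^{\a-\iota}$ with integration by parts and Lemma \ref{estdivk1ta}, whose one-derivative loss gives a factor $C\rho$ absorbed via $\rho\le 1/(2C)$;
\item \eqref{gamma_dexhk} with the binomial splitting of $\avk^m$ for $\Gamma$;
\item Theorem \ref{thminxm} for the $|\a|=0$ and $L^2$ parts.
\end{itemize}

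Three small corrections:
\begin{itemize}
\item The transported $\hat f(s,k,x_3-(t-s)v_3,u)$ does not depend on $v_\parallel$. All $(k(t-s))^{\b}$ factors come from the phase $e^{-iv_\parallel\cdot k(t-s)}$.
\item $\p_{v_\parallel}^{\a}\hat\Gamma$ contains $\p_{v_\parallel}^{\a}\hat f$ at top order, so the argument is really the global sup-norm bootstrap of your last paragraph, not an induction on $|\a|$.
\item The nonlinear bookkeeping also needs the factorial bounds \eqref{framn}--\eqref{framn2}, not only \eqref{sumlrhom0}.
\end{itemize}
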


To establish the propagation of Gevrey regularity with mixed derivatives, we mainly focus on the following a priori estimate.
\begin{proposition}\label{prop:inftyxvr2}
 Under the same assumptions of Theorem \ref{thm:apriestxvr2}, and we further assume 
 \begin{align*}
 &\sup_{m, \a\geq 0}L_{\rho, m, |\a|}
 \Vert (1+t)^{\sigma/2}w \avk^m\p_{v_\parallel}^{\a} \hat{ f}(t)\Vert_{ L^1_kL^\infty_{T, x_3,v}}< \infty,
 \end{align*}
 the following estimate is satisfied:
\begin{align*}
 &\sup_{m, \a\geq 0}L_{\rho, m, |\a|}
 \Vert (1+t)^{\sigma/2}w \avk^m\p_{v_\parallel}^{\a} \hat{ f}(t)\Vert_{ L^1_kL^\infty_{T, x_3,v}}\\&\leq C\bigg[\sup_{m, \a\geq 0}L_{\rho, m, |\a|} \Vert w \langle k \rangle ^{m}\p_{v_\parallel}^{\a}\hat{ f}_0 \Vert_{L^1_k L^\infty_{x_3,v}} 
+\sup_{m\geq 0 }L_{\rho, m,0 }\Big(\Vert \avk^{m}\hat{f}_0\Vert_{L^1_k L^2_{x_3,v}} + \Vert \avk^{m}\hat{f}_0\Vert_{L^p_k L^2_{x_3,v}}\Big)\bigg]\\&+ C\bigg(\sup_{m,\a\geq 0}L_{\rho, m, |\a|} \Vert (1+t)^{\sigma/2} w \langle k\rangle^{m} \p_{v_\parallel}^{\a} \hat f(t)\Vert_{ L^1_kL^\infty_{T, x_3,v}}\bigg)^2
\end{align*}
for any $T>0$. 
\end{proposition}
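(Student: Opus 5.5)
We argue along characteristics, since no $L^2$ estimate for $\p_{v_\parallel}^\a\hat f$ is available. We write the mild formulation of \eqref{f_eqn_r2} in Fourier variables. For fixed $(k,x_3,v)$, backward characteristics in $x_3$ either reach $t=0$ or hit the wall $x_3=0$ once at time $t-t_{\mathbf b}$ with $t_{\mathbf b}=x_3/|v_3|$ (only for $v_3>0$). This gives
\begin{align*}
\hat f(t,k,x_3,v)={}&\mathbf 1_{t\le t_{\mathbf b}}e^{-(\nu(v)+ik\cdot v_\parallel)t}\hat f_0(k,x_3-tv_3,v)+\mathbf 1_{t>t_{\mathbf b}}e^{-(\nu+ik\cdot v_\parallel)t_{\mathbf b}}P_\gamma\hat f(t-t_{\mathbf b},k,0,v)\\
&+\int e^{-(\nu+ik\cdot v_\parallel)(t-s)}\big[K\hat f+\hat\Gamma(\hat f,\hat f)\big](s,k,x_3-(t-s)v_3,v)\,\dd s .
\end{align*}
The point is that $t_{\mathbf b}$ depends only on $(x_3,v_3)$, so $\p_{v_\parallel}^\a$ does not touch the characteristic geometry. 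It falls only on $e^{-\nu(v)s}$, on $e^{-ik\cdot v_\parallel s}$, on $\hat f_0(\cdot,v)$, on $\sqrt{\mu(v)}$ in $P_\gamma$, and on the collision terms.

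We first apply $w\avk^m\p_{v_\parallel}^\a$ and expand by Leibniz into $\p^{\a_1}e^{-\nu s}\,\p^{\a_2}e^{-ik\cdot v_\parallel s}\,\p^{\a_3}(\cdot)$. We use Lemma \ref{lemma:deri_nu} for the first factor. The second factor gives $(ks)^{\a_2}$, so it costs $|\a_2|$ extra powers of $\avk$ and $s^{|\a_2|}$. We absorb the time power with Lemma \ref{lemma:enut_control}: $e^{-\nu s/8}s^{|\a_2|}\le(8/\nu_0)^{|\a_2|}|\a_2|!$. The resulting factor $\avk^{m+|\a_2|}|\a_2|!$ is exactly compatible with $L_{\rho,m,|\a|}$, because $L_{\rho,m,|\a|}$ contains $(m+|\a|)!|\a|!$ and the factor $(16/\nu_0)^m$ beats $(8/\nu_0)^{|\a_2|}$ through the shift $m\mapsto m+|\a_2|$, using \eqref{framn}. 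We also keep $e^{-\nu s/4}$ to handle $(1+t)^{\sigma/2}$ as usual.

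Next we treat each piece. The initial-data piece is bounded by the first supremum on the right. In the boundary piece, $\p_{v_\parallel}$ hits only $\sqrt{\mu(v)}$, via \eqref{kmu}. The remaining factor is $\avk^{m'}P_\gamma\hat f$ with no $v_\parallel$-derivatives on $\hat f$, which is controlled by Theorem \ref{thminxm} and then by the $L^2$ initial norms. The nonlinear piece uses \eqref{gamma_dexhk} together with the $\avk^m$ splitting \eqref{klm}. We then sum over the multi-indices using \eqref{able}, \eqref{sum_tau_poly}, \eqref{sumlrhom0} and \eqref{inmati}. After extracting one factor $\nu$ to cancel $\int e^{-\nu(t-s)/2}\nu\,\dd s\lesssim1$, this gives the quadratic term.

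The main obstacle is the $K$ term, since $\p^\a_{v_\parallel}\mathbf k$ is not integrable for $|\a|\ge2$. For $|\a|\ge1$ we write $\p_{v_\parallel}^\a=\p_{v_\parallel}^\iota\p_{v_\parallel}^{\a-\iota}$ and expand $\p_{v_\parallel}^{\a-\iota}=((\p_{v_\parallel}+\p_{u_\parallel})-\p_{u_\parallel})^{\a-\iota}$. We then integrate by parts so that each $-\p_{u_\parallel}$ moves onto $\hat f(u)$. This produces terms $\int\p^\iota_{v_\parallel}(\p_{v_\parallel}+\p_{u_\parallel})^{\tau}\mathbf k\;\p^{\a-\iota-\tau}_{u_\parallel}\hat f\,\dd u$, which Lemma \ref{estdivk1ta} bounds by $C|\tau|!\nu(v)\,\|w\p^{\a-\iota-\tau}\hat f\|_{L^\infty}$.

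The loss is that only $|\a|-1$ derivatives reach $f$. After weighting, the ratio $L_{\rho,m,|\a|}/L_{\rho,m,|\a|-1-|\tau|}$ together with $\binom{\a}{\tau}|\tau|!$ yields a factor of size $C\rho$ times a convergent sum. Choosing $\rho\le1/(2C)$ makes this at most $\tfrac12\sup L\|\cdot\|$, and we absorb it into the left-hand side. The $|\a|=0$ part of $K$ is handled by the standard $L^2$–$L^\infty$ iteration (Lemma \ref{lemma:linftyexg} with $\avk^m$, Theorem \ref{thminxm}).

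Finally, we take the supremum over $m,\a$ and combine the bounds, which proves Proposition \ref{prop:inftyxvr2}.
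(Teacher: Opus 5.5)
Your proposal is correct and follows essentially the same route as the paper. The common steps are:
\begin{itemize}
\item the mild formulation along the once-reflected characteristic, with $\p_{v_\parallel}^\alpha$ distributed by Leibniz and controlled through Lemmas \ref{lemma:deri_nu} and \ref{lemma:enut_control};
\item the boundary and nonlinear pieces, handled via \eqref{kmu} and \eqref{gamma_dexhk};
\item the $K$ term, split into a part with no derivative on $\mathbf{k}$, closed by Theorem \ref{thminxm}, and a part where one derivative $\p_{v_\parallel}^\iota$ stays on $\mathbf{k}$ while the rest is rewritten as $(\p_{v_\parallel}+\p_{u_\parallel}-\p_{u_\parallel})^{\gamma-\iota}$ and integrated by parts, which via Lemma \ref{estdivk1ta} gives a $C\rho$ factor absorbed for $\rho\le 1/(2C)$.
\end{itemize}
The difference is only in detail: the paper carries out the combinatorial summations with the additional Leibniz factors explicitly, where you summarize them.
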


To prove the proposition, we mainly apply the method of characteristics. First we define the stochastic cycle. We note that in the half-space problem, the characteristic only interacts once with the boundary, thus we only define the backward exit time and backward exit position in the physical space $(x_3,v)\in \mathbb{R}^+ \times \{v\in \mathbb{R}^3|v_3>0\}$:
	\begin{equation*}%\label{xb_tb}
		\begin{split}
			&\tb(x_3,v) : = \sup\{s\geq 0, x_3-sv_3 \in \mathbb{R}^+\}, \notag\\
			&t^1: = t-\tb(x_3,v), \ \ \xb(x_3,v) : = x_3 - \tb(x_3,v)v_3 =0.
		\end{split}
	\end{equation*}
We rewrite \eqref{f_eqn_r2} into the following formulation:
	\begin{align*}
		& \p_t \hat{f} + iv_\parallel\cdot k \hat{f} + v_3 \p_{x_3} \hat{f} + \nu(v) \hat{f} = K\hat{f} + \hat{\Gamma}(\hat{f},\hat{f}).
	\end{align*}
We apply the method of characteristics to have
	\begin{align}
		& \hat{f}(t,k_1,k_2,x_3,v) \notag\\
		&= \mathbf{1}_{\tb>t} e^{-\nu(v) t - i(v_\parallel\cdot k)t} \hat{f}_0(k_1,k_2,x_3 - t v_3, v) \label{chara:initial} \\
		& + \int^t_{\max\{0, t-\tb\}} e^{-\nu(v)(t-s) - i(v_\parallel\cdot k)(t-s)} \int_{\mathbb{R}^3} \mathbf{k}(v,u) \hat{f}(s,k_1,k_2,x_3-(t-s)v_3,u) \dd u\dd s \label{chara:K}\\
		& + \int^t_{\max\{0,t-\tb\}} e^{-\nu(v)(t-s)-i(v_\parallel\cdot k)(t-s)} \hat{\Gamma}(\hat{f},\hat{f})(s,k_1,k_2,x_3-(t-s)v_3,v) \dd s \label{chara:Gamma}
 \\
		& + \mathbf{1}_{\tb \leq t} e^{-\nu(v)\tb - i(v_\parallel\cdot k)\tb}\hat{f}(t^1,k_1,k_2,0,v). \label{chara:bdr}
	\end{align}
Here the boundary term \eqref{chara:bdr} is represented using the diffuse boundary condition: 
	\begin{align*}
		& \eqref{chara:bdr}= \mathbf{1}_{\tb \leq t} c_{\mu}e^{-\nu(v) \tb- i(v_\parallel\cdot k)\tb} \sqrt{\mu(v)}\int_{u_3<0} \hat{f}(t^1, k,0,u)\sqrt{\mu(u)}|u_3| \dd u. 
	\end{align*}

\begin{proof}[\textbf{Proof of Proposition \ref{prop:inftyxvr2}}]

For the proof, we apply $w\avk^m\p_{v_\parallel}^\a$ to \eqref{chara:initial}, \eqref{chara:K}, \eqref{chara:Gamma}, \eqref{chara:bdr} and then multiply $L_{\rho, m, |\a|}$. We proceed through four steps to derive the upper bound of the above terms.

\textit{Step 1: estimate of \eqref{chara:initial}.}

Applying Leibniz's formula yields 
\begin{align}
&L_{\rho, m, |\a|}| w(v)\avk^m\p_{v_\parallel}^\a\eqref{chara:initial}|
\notag\\
& \leq L_{\rho, m, |\a|}\sum_{\b\leq \a}\sum_{\g\leq \b}\binom{\a}{\b}\binom{\b}{\g}\Big|\Big(\p_{v_\parallel}^{\a-\b} e^{- \nu(v) t} \Big) \notag\\
&\times\Big(\p_{v_\parallel}^{\b-\g} e^{- i(v_\parallel\cdot k)t} \Big)w(v)\avk^m\p_{v_\parallel}^{\g}\hat{ f}_0(k_1, k_2,x_3-t v_3, v) \Big| \notag \\
&\leq e^{-\nu_0t/4}\sum_{\b\leq \a}\sum_{\g\leq \b }\binom{\a}{\b}\binom{\b}{\g}L^{|\a-\b|}|\a-\b|!|\b-\g|!\bigg(\frac{8}{\nu_0}\bigg)^{|\b-\g|}\frac{L_{\rho, m, |\a|}}{ L_{\rho, m+|\b-\g|,|\g|} }\notag\\ & \times
 L_{\rho, m+|\b-\g|,|\g|} \Vert w\langle k \rangle ^{m+|\b-\g|}\p_{v_\parallel}^{\g}\hat{ f}_0 \Vert_{ L^\infty_{ x_3,v}}\notag\\
&\leq e^{-\nu_0t/4}\bigg(\sup_{m, \a\geq 0}L_{\rho, m, |\a|} \Vert w \langle k \rangle ^{m}\p_{v_\parallel}^{\a}\hat{ f}_0 \Vert_{ L^\infty_{x_3,v}}\bigg)\notag\\ & \times\sum_{\b\leq \a}\sum_{\g\leq \b }\binom{\a}{\b}\binom{\b}{\g}L^{|\a-\b|}|\a-\b|!|\b-\g|!\bigg(\frac{8}{\nu_0}\bigg)^{|\b-\g|}\frac{L_{\rho, m, |\a|}}{ L_{\rho, m+|\b-\g|,|\g|} }. \label{middr2fir}
\end{align}
In the fourth line, we have applied \eqref{tenut}, \eqref{derienut} and the following computation:
\begin{align}\label{tole}
 & |\p_{v_\parallel}^{\alpha-\beta} e^{-\nu(v)t} \p_{v_\parallel}^{\beta-\gamma}e^{-i(v_\parallel\cdot k)t}| \leq L^{|\alpha-\beta|} |\alpha-\beta|! e^{-\nu(v)t/4}e^{-\nu(v)t/4}|t|^{\beta-\gamma}\langle k\rangle^{|\beta-\gamma|} \notag\\
 & \leq L^{|\alpha-\beta|} |\alpha-\beta|! e^{-\nu(v)t/4} \Big(\frac{8}{\nu_0} \Big)^{|\beta-\gamma|} |\beta-\gamma|! \langle k\rangle^{|\beta-\gamma|}.
\end{align}

Using \eqref{absab} with $\alpha>\beta$, we compute
\begin{align}\label{lrhredu}
&\binom{\a}{\b}\binom{\b}{\g}L^{|\a-\b|}|\a-\b|!|\b-\g|!\bigg(\frac{8}{\nu_0}\bigg)^{|\b-\g|}\frac{L_{\rho, m, |\a|}}{ L_{\rho, m+|\b-\g|,|\g|} }
%\notag\\&\leq\binom{|\a|}{|\b|}\binom{|\b|}{|\g|}(|\a-\b|+1)!|\b-\g|!\bigg(\frac{8}{\nu_0}\bigg)^{|\a-\b|+|\b-\g|}\frac{L_{\rho, m, |\a|}}{ L_{\rho, m+|\b-\g|,|\g|} }
\notag\\
&\leq \frac{|\a|!}{|\b|!(|\a|-|\b|)!}\frac{|\b|!}{|\g|!(|\b|-|\g|)!}L^{|\a-\b|}|\a-\b|!|\b-\g|!\bigg(\frac{8}{\nu_0}\bigg)^{|\b-\g|} \notag\\&\quad\frac{{(16/\nu_0)}^{m}\rho^{m+|\a|+1}(m+2)^2(|\a|+2)^2}{(m+|\a|)!|\a|!}\frac{(m+|\b|)!|\g|!}{(16/\nu_0)^{m+|\b-\g|}\rho^{m+|\b|+1}(m+|\b-\g|+2)^2(|\g|+2)^2}
\notag\\
&\leq (L\rho)^{|\a-\b|}\bigg(\frac{1}{2}\bigg)^{|\b-\g|} \frac{(|\a|+2)^2}{(|\g|+2)^2}\frac{(m+|\b|)!}{(m+|\a|)!}\frac{(m+2)^2}{(m+|\b-\g|+2)^2} \notag\\
& \leq \frac{(|\alpha|+2)^2}{(|\gamma|+2)^2}\frac{(L\rho)^{|\a-\b|}}{|\alpha-\beta|!}\bigg(\frac{1}{2}\bigg)^{|\b-\g|} .
\end{align}

Taking summation in $\gamma,\beta$, we apply \eqref{abdiv2} and \eqref{inmati} and obtain 
\begin{align}\label{sumconfir}
&\sum_{\beta\leq \alpha}\sum_{\gamma\leq \beta} \eqref{lrhredu}=\sum_{\b\leq \a}\frac{(|\a|+2)^2}{(|\b|+2)^2}\frac{(L\rho)^{|\a-\b|}}{|\a-\b|!}\sum_{\g\leq \b }\frac{(|\b|+2)^2}{(|\g|+2)^2}\bigg(\frac{1}{2}\bigg)^{|\b-\g|} \leq C.
\end{align}

Substituting the above inequality into \eqref{middr2fir}, we conclude
\begin{align}\label{mixdivr2infi}
L_{\rho, m, |\a|} |w(v)\avk^m\p_{v_\parallel}^\a\eqref{chara:initial}|
 \leq Ce^{-\nu_0t/4}\bigg(\sup_{m, \a\geq 0}L_{\rho, m, |\a|} \Vert w \langle k \rangle ^{m}\p_{v_\parallel}^{\a}\hat{ f}_0 \Vert_{ L^\infty_{x_3,v}}\bigg).
\end{align}

\textit{Step 2: estimate of \eqref{chara:K}.}

We use Leibniz's formula to write
\begin{align}\label{mixdivr2sec}
&L_{\rho, m, |\a|}w(v) | \avk^m\p_{v_\parallel}^{\a} \eqref{chara:K} |
\notag\\
& \leq L_{\rho, m, |\a|}\sum_{\b\leq \a}\binom{\a}{\b}\int_{\max\{0, t-\tb\}}^{t} \Big|\Big(\p_{v_\parallel}^{\a-\b}e^{-\nu(v) (t-s)}\Big) \Big(\p_{v_\parallel}^{\b}e^{ - i(v_\parallel\cdot k)(t-s)}\Big) \Big| \notag \\
&\times\int_{\mathbb{R}^3}\mathbf{k}(v,u) w(v)\langle k \rangle^{m} |\hat f(s,x_3-(t-s)v_3,u)| \dd u \dd s \notag\\& +L_{\rho, m, |\a|}\sum_{\b\leq \a}\sum_{\iota\leq \g\leq \b}\binom{\a}{\b}\binom{\b}{\g}\int_{\max\{0, t-\tb\}}^{t} \Big(\p_{v_\parallel}^{\a-\b}e^{-\nu(v) (t-s)}\Big) \Big|\Big(\p_{v_\parallel}^{\b-\g}e^{ - i(v_\parallel\cdot k)(t-s)}\Big) \Big|\notag\\
& \times \Big|\int_{\mathbb{R}^3}[\p_{v_\parallel}^{\g}\mathbf{k}(v,u)]w(v)\langle k \rangle^{m}\hat f(s,x_3-(t-s)v_3,u) \dd u \dd s \Big|\notag\\
 & =:\mathcal{A}_1+\mathcal{A}_2.
 \end{align}

For the term $\mathcal{A}_1$ in \eqref{mixdivr2sec}, we use \eqref{tole} with replacing $t$ and $\b-\g$ by $t-s$ and $\b$ respectively to derive
 \begin{align*}
| \mathcal{A}_1 | 
 & \leq L_{\rho, m, |\a|}\sum_{\b\leq \a}\binom{\a}{\b}L^{|\a-\b|}|\a-\b|!|\b|!\bigg(\frac{8}{\nu_0}\bigg)^{|\b|} \int_{\max\{0, t-\tb\}}^{t} \int_{\mathbb{R}^3}e^{-\nu(v) (t-s)/4}\mathbf{k}(v,u)\\& \times w(v)\langle k \rangle^{m+|\b|} |\hat f(s,x_3-(t-s)v_3,u)| \dd u \dd s\notag\\
 & \leq L_{\rho, m, |\a|}\sum_{\b\leq \a}\binom{\a}{\b}L^{|\a-\b|}|\a-\b|!|\b|!\bigg(\frac{8}{\nu_0}\bigg)^{|\b|}\bigg(\sup_{0\leq s\leq t}\Vert (1+s)^{\sigma/2}w\langle k \rangle^{m+|\b|} \hat f(s)\Vert_{ L^\infty_{ x_3,v}}\bigg) \\& \times \int_{\max\{0, t-\tb\}}^{t} \int_{\mathbb{R}^3} e^{-\nu(v) (t-s)/4}(1+s)^{-\sigma/2}\frac{w(v)}{w(u)}\mathbf{k}(v,u) \dd u \dd s. 
 \end{align*}
 Then we use \eqref{k_theta} and 
\begin{align}
 & e^{-\nu(v) (t-s)/8} (1+s)^{-\sigma/2} \leq C(1+t)^{-\sigma/2},\label{eettss}\\
 & \int_{\max\{0, t-\tb\}}^{t} e^{-\nu(v) (t-s)/8}\dd s \leq C \nu^{-1}(v)\label{eettssint}
\end{align}
 to obtain 
 \begin{align*}%\label{esia1com}
| \mathcal{A}_1 | & \leq C(1+t)^{-\sigma/2}\sum_{\b\leq \a} \frac{L_{\rho, m, |\a|}}{L_{\rho, m+|\b|,0}}\binom{\a}{\b}L^{|\a-\b|}|\a-\b|!|\b|!\bigg(\frac{8}{\nu_0}\bigg)^{|\b|}\notag\\
& \times L_{\rho, m+|\b|,0} \bigg(\sup_{0\leq s\leq t}\Vert (1+s)^{\sigma/2}w\langle k \rangle^{m+|\b|} \hat f(s)\Vert_{ L^\infty_{ x_3,v}}\bigg) 
\notag\\
 & \leq C(1+t)^{-\sigma/2}\bigg(\sup_{m\geq 0}L_{\rho, m,0}\sup_{0\leq s\leq t}\Vert(1+s)^{\sigma/2} w\langle k \rangle^{m} \hat f(s)\Vert_{ L^\infty_{x_3,v}}\bigg)\notag \\& \times\sum_{\b\leq \a}\binom{\a}{\b}L^{|\a-\b|}|\a-\b|!|\b|!\bigg(\frac{8}{\nu_0}\bigg)^{|\b|}\frac{L_{\rho, m, |\a|}}{L_{\rho, m+|\b|,0}}.
\end{align*}

We use \eqref{absab} to compute
\begin{align*}
&\sum_{\beta\leq \alpha}\binom{\a}{\b}L^{|\a-\b|}|\a-\b|!|\b|!\bigg(\frac{8}{\nu_0}\bigg)^{|\b|}\frac{L_{\rho, m, |\a|}}{L_{\rho, m+|\b|,0}}
\notag\\
&\leq \sum_{\beta\leq \alpha}\frac{|\a|!}{|\b|!(|\a|-|\b|)!}L^{|\a-\b|}|\a-\b|!|\b|!\bigg(\frac{8}{\nu_0}\bigg)^{|\b|} \frac{{(16/\nu_0)}^{m}\rho^{m+|\a|+1}(m+2)^2(|\a|+2)^2}{(m+|\a|)!|\a|!}\notag\\&\times\frac{(m+|\b|)!}{(16/\nu_0)^{m+|\b|}\rho^{m+|\b|+1}(m+|\b|+2)^2}
\notag\\
&=\sum_{\beta\leq \alpha}( L\rho)^{|\a-\b|}(|\a|+2)^2\bigg(\frac{1}{2}\bigg)^{|\b|} \frac{(m+|\b|)!}{(m+|\a|)!}\frac{(m+2)^2}{(m+|\b|+2)^2} \notag \\
& \leq \sum_{\beta\leq \alpha}\frac{( L\rho)^{|\a-\b|}}{|\a-\b|!}\frac{(|\a|+2)^2}{(|\b|+2)^2}(|\b|+2)^2\bigg(\frac{1}{2}\bigg)^{|\b|} \leq C\sum_{\beta\leq \alpha}\frac{( L\rho)^{|\a-\b|}}{|\a-\b|!}\frac{(|\a|+2)^2}{(|\b|+2)^2}
\leq C.
\end{align*}
In the last inequality, we have applied \eqref{inmati}.

Thus we conclude
 \begin{align}\label{mixdivr2secq2a1}
| \mathcal{A}_1 | \leq C(1+t)^{-\sigma/2}\bigg(\sup_{m\geq 0}L_{\rho, m,0}\sup_{0\leq s\leq t}\Vert(1+s)^{\sigma/2} w\langle k \rangle^{m} \hat f(s)\Vert_{ L^\infty_{ x_3,v}}\bigg).
\end{align}
For the term $\mathcal{A}_2$ in \eqref{mixdivr2sec},  Leibniz's formula yields 
 \begin{align*}
 &|\mathcal{A}_2 |\leq L_{\rho, m, |\a|} \sum_{\b\leq \a}\sum_{\iota\leq \g\leq \b}\binom{\a}{\b}\binom{\b}{\g}\int_{\max\{0, t-\tb\}}^{t} \Big| \Big(\p_{v_\parallel}^{\a-\b}e^{-\nu(v) (t-s)}\Big) \Big(\p_{v_\parallel}^{\b-\g}e^{ - i(v_\parallel\cdot k)(t-s)}\Big) \Big| \\
	&\times \Big|\int_{\mathbb{R}^3}\big[\p_{v_\parallel}^{\iota}(\p_{v_\parallel}+\p_{u_\parallel}-\p_{u_\parallel})^{\g-\iota}\mathbf{k}(v,u)\big]w(v) \langle k \rangle^{m}\hat f(s,x_3-(t-s)v_3,u) \dd u \dd s \Big| \\
 & \leq L_{\rho, m, |\a|} \sum_{\b\leq \a}\sum_{\iota\leq \g\leq \b}\sum_{\tau\leq \g-\iota}\binom{\a}{\b}\binom{\b}{\g}\binom{\g-\iota}{\tau} \Big|\int_{\max\{0, t-\tb\}}^{t}\Big(\p_{v_\parallel}^{\a-\b}e^{-\nu(v) (t-s)}\Big) \Big(\p_{v_\parallel}^{\b-\g}e^{ - i(v_\parallel\cdot k)(t-s)}\Big) \\
	&\times \int_{\mathbb{R}^3}\big[\p_{v_\parallel}^{\iota}(-\p_{u_\parallel})^{\g-\iota-\tau}(\p_{v_\parallel}+\p_{u_\parallel})^{\tau} \mathbf{k}(v,u)\big]w(v)\langle k \rangle^{m}\hat f(s,x_3-(t-s)v_3,u) \dd u \dd s \Big|\\
 & \leq L_{\rho, m, |\a|} \sum_{\b\leq \a}\sum_{\iota\leq \g\leq \b}\sum_{\tau\leq \g-\iota}\binom{\a}{\b}\binom{\b}{\g}\binom{\g-\iota}{\tau}\Big| \int_{\max\{0, t-\tb\}}^{t} \Big(\p_{v_\parallel}^{\a-\b}e^{-\nu(v) (t-s)}\Big) \Big(\p_{v_\parallel}^{\b-\g}e^{ - i(v_\parallel\cdot k)(t-s)}\Big) \\
	&\times \int_{\mathbb{R}^3}\big[\p_{v_\parallel}^{\iota}(\p_{v_\parallel}+\p_{u_\parallel})^{\tau} \mathbf{k}(v,u) \big] w(v) \langle k \rangle^{m}\p_{u_\parallel}^{\g-\iota-\tau}\hat f(s,x_3-(t-s)v_3,u) \dd u \dd s\Big|.
 \end{align*}
In the last line, we have applied integration by parts in $\dd u_\parallel$.

Then we use \eqref{tole} with replacing $t$ by $t-s$ to obtain
\begin{align*}
&|\mathcal{A}_2| \leq L_{\rho, m, |\a|}\sum_{\b\leq \a}\sum_{\iota\leq \g\leq \b}\sum_{\tau\leq \g-\iota}\binom{\a}{\b}\binom{\b}{\g}\binom{\g-\iota}{\tau}L^{|\a-\b|}|\a-\b|! |\b-\g|!\bigg(\frac{8}{\nu_0}\bigg)^{|\b-\g|} \\ 
&\times \int^t_{\max\{0,t-\tb\}}\int_{\mathbb{R}^3}\big|\p_{v_\parallel}^{\iota}(\p_{v_\parallel}+\p_{u_\parallel})^{\tau} \mathbf{k}(v,u) \big|e^{-\nu(v) (t-s)/4}w(v) \\ 
&\times|\langle k \rangle^{m+|\b-\g|}\p_{u_\parallel}^{\g-\tau-\iota}\hat f(s,x_3-(t-s)v_3,u)| \dd u \dd s\\ 
& \leq L_{\rho, m, |\a|} \sum_{\b\leq \a}\sum_{\iota\leq \g\leq \b}\sum_{\tau\leq \g-\iota}\binom{\a}{\b}\binom{\b}{\g}\binom{\g-\iota}{\tau}L^{|\a-\b|}|\a-\b|! |\b-\g|!\bigg(\frac{8}{\nu_0}\bigg)^{|\b-\g|} \\
&\times \int_{\max\{0, t-\tb\}}^{t}e^{-\nu(v) (t-s)/4} (1+s)^{-\sigma/2}\int_{\mathbb{R}^3} \frac{w(v)}{w(u)}\big|\p_{v_\parallel}^{\iota}(\p_{v_\parallel}+\p_{u_\parallel})^{\tau} \mathbf{k}(v,u)\big| \dd u \dd s\\ 
& \times\sup_{0\leq s\leq t}\Vert (1+s)^{\sigma/2}w \langle k \rangle^{m+|\b-\g|}\p_{v_\parallel}^{\g-\tau-\iota}\hat f(s)\Vert_{ L^\infty_{ x_3,v}}.
 \end{align*}

Combining Lemma \ref{estdivk1ta}, \eqref{eettss}, \eqref{eettssint}, we derive
 \begin{align}\label{r2a2mid}
 &|\mathcal{A}_2| \notag\\
 & \leq C\sum_{\b\leq \a}\sum_{\iota\leq \g\leq \b}\sum_{\tau\leq \g-\iota}\frac{L_{\rho, m, |\a|}}{L_{\rho, m+|\b-\g|,|\g-\tau|-1}} \binom{\a}{\b}\binom{\b}{\g}\binom{\g-\iota}{\tau}L^{|\a-\b|}|\a-\b|! |\b-\g|!\bigg(\frac{8}{\nu_0}\bigg)^{|\b-\g|} |\tau|!\notag\\
 &\times L_{\rho, m+|\b-\g|,|\g-\tau|-1}(1+t)^{-\sigma/2}\bigg(\sup_{0\leq s\leq t}\Vert (1+s)^{\sigma/2}w \langle k \rangle^{m+|\b-\g|}\p_{v_\parallel}^{\g-\tau-\iota}\hat f(s)\Vert_{ L^\infty_{ x_3,v}}\bigg)
\notag \\ 
& \leq C (1+t)^{-\sigma/2}\bigg(\sup_{m, \a\geq 0}L_{\rho, m, |\a|} \sup_{0\leq s\leq t}
 \Vert(1+s)^{\sigma/2} w \avk^m\p_{v_\parallel}^{\a} \hat f(s)\Vert_{L^\infty_{x_3,v}}\bigg) \sum_{\b\leq \a}\sum_{\iota\leq \g\leq \b}\sum_{\tau\leq \g-\iota}\notag\\
 &\times\binom{\a}{\b}\binom{\b}{\g}\binom{\g-\iota}{\tau}L^{|\a-\b|}|\a-\b|! |\b-\g|! \bigg(\frac{8}{\nu_0}\bigg)^{|\b-\g|}|\tau|!
	\frac{L_{\rho, m, |\a|}}{L_{\rho, m+|\b-\g|,|\g-\tau|-1}}.
\end{align}

We apply \eqref{absab} and compute that
 \begin{align*}
 &\eqref{r2a2mid} = \binom{\a}{\b}\binom{\b}{\g}\binom{\g-\iota}{\tau}L^{|\a-\b|}|\a-\b|! |\b-\g|!\bigg(\frac{8}{\nu_0}\bigg)^{|\b-\g|}
\frac{L_{\rho, m, |\a|}}{L_{\rho, m+|\b-\g|,|\g-\tau|-1}}|\tau|!
\notag\\
&\leq \frac{|\a|!}{|\b|!(|\a|-|\b|)!}\frac{|\b|!}{|\g|!(|\b|-|\g|)!}\frac{(|\g|-1)!}{|\tau|!(|\g|-1-|\tau|)!}L^{|\a-\b|}|\a-\b|!|\b-\g|! \bigg(\frac{8}{\nu_0}\bigg)^{|\b-\g|}|\tau|!\notag\\&\quad\frac{{(16/\nu_0)}^{m}\rho^{m+|\a|+1}(m+2)^2(|\a|+2)^2}{(m+|\a|)!|\a|!}\frac{(m+|\b-\tau|-1)!(|\g-\tau|-1)!}{(16/\nu_0)^{m+|\b-\g|}\rho^{m+|\b-\tau|}(m+|\b-\g|+2)^2(|\g-\tau|+1)^2}
\notag\\
&\leq \rho\frac{(|\a|+2)^2}{(|\g-\tau|+1)^2}( L\rho)^{|\a-\b|}\bigg(\frac{1}{2}\bigg)^{|\b-\g|}\rho^{|\tau|}\frac{(m+|\b-\tau|-1)!}{(m+|\a|)!|\g|}\frac{(m+2)^2}{(m+|\b-\g|+2)^2}
%\notag\\&\leq \rho\frac{(|\a|+2)^2}{(|\g-\tau|+1)^2}\frac{( L\rho)^{|\a-\b|}}{|\a-\b|!}\bigg(\frac{1}{2}\bigg)^{|\b-\g|}\frac{\rho^{|\tau|}}{|\tau|!}
.
\end{align*}

We take summation in $\tau,\gamma,\beta$ and obtain
 \begin{align*} 
 &\sum_{\b\leq \a}\sum_{\iota\leq \g\leq \b}\sum_{\tau\leq \g-\iota}\eqref{r2a2mid}\leq \rho\sum_{\b\leq \a}\sum_{\iota\leq \g\leq \b}\sum_{\tau\leq \g-\iota}\frac{(|\a|+2)^2}{(|\g-\tau|+1)^2}\frac{( L\rho)^{|\a-\b|}}{|\a-\b|!}\bigg(\frac{1}{2}\bigg)^{|\b-\g|}\frac{\rho^{|\tau|}}{(|\tau|+1)!} \notag \\
&\leq 4\rho \sum_{\b\leq \a}\frac{(|\a|+2)^2}{(|\b|+2)^2}\frac{( L\rho)^{|\a-\b|}}{|\a-\b|!}\sum_{\iota\leq \g\leq \b}\frac{(|\b|+2)^2}{(|\g|+2)^2}\bigg(\frac{1}{2}\bigg)^{|\b-\g|}\sum_{\iota\leq \tau\leq \g}\frac{(|\g|+2)^2}{(|\tau|+2)^2}\frac{\rho^{|\gamma-\tau|}}{|\gamma-\tau|!}\leq C\rho. 
 \end{align*}
In the last line, we have applied $\frac{1}{(|\gamma-\tau|+1)^2}\leq \frac{4}{(|\gamma-\tau|+2)^2}$, and applied the change of variable in the summation: $ \tau \to \gamma-\tau $. In the last inequality, we have applied \eqref{abdiv2} and \eqref{inmati}.

 %\begin{align*} 
% &\sum_{\tau\leq \g-\iota}\frac{(|\g|+2)^2}{(|\g-\tau|+1)^2}\frac{\rho^{|\tau|}}{(|\tau|+1)!}=
% \\&\leq \sum_{|\tau|=0}^{[ |\g-e|/2]}\frac{1}{[(|\tau|+1)!]^{1/2}}\frac{\rho^{|\tau|}}{[(|\tau|+1)!]^{1/2}}+\sum_{|\tau|=[ |\g-e|/2]+1}^{[ |\g-e|]}\frac{(|\g|+2)^2}{[(|\tau|+1)!]^{1/2}}\frac{\rho^{|\tau|}}{[(|\tau|+1)!]^{1/2}}\leq C.
% \end{align*}

 Thus, we conclude
 \begin{equation}\label{mixdivr2secq2a2}
 |\mathcal{A}_2| \leq \rho C (1+t)^{-\sigma/2}\bigg(\sup_{m, \a\geq 0}L_{\rho, m, |\a|} \sup_{0\leq s\leq t}
 \Vert(1+s)^{\sigma/2} w \avk^m\p_{v_\parallel}^{\a} \hat f(s)\Vert_{L^\infty_{x_3,v}}\bigg).
\end{equation}
We combine the estimate \labelcref{mixdivr2secq2a1,mixdivr2secq2a2} with \eqref{mixdivr2sec} to conclude 
\begin{align}\label{mixdivr2kf}
&L_{\rho, m, |\a|} | w(v)\avk^m\p_{v_\parallel}^{\a}\eqref{chara:K} |\notag\\&\leq C(1+t)^{-\sigma/2}\bigg(\sup_{m\geq 0}L_{\rho, m,0}\sup_{0\leq s\leq t}\Vert(1+s)^{\sigma/2} w\langle k \rangle^{m} \hat f(s)\Vert_{ L^\infty_{ x_3,v}}\bigg)\notag\\&+\rho C (1+t)^{-\sigma/2}\bigg(\sup_{m, \a\geq 0}L_{\rho, m, |\a|} \sup_{0\leq s\leq t}
 \Vert(1+s)^{\sigma/2} w \avk^m\p_{v_\parallel}^{\a} \hat f(s)\Vert_{L^\infty_{x_3,v}}\bigg).
\end{align}

\textit{Step 3: estimate of \eqref{chara:Gamma}.}

In view of  \eqref{lingata}, we use Leibniz's formula to obtain 
\begin{align*}
& L_{\rho, m, |\a|} |w(v)\avk^m\p_{v_\parallel}^\a\eqref{chara:Gamma}|\notag\\
	& =L_{\rho, m, |\a|} \sum_{\b\leq \a}\sum_{ \g\leq \b}\binom{\a}{\b}\binom{\b}{\g}\int_{\max\{0, t-\tb\}}^{t} \Big|\Big(\p_{v_\parallel}^{\a-\b} e^{-\nu(v)(t-s)}\Big)\Big(\p_{v_\parallel}^{\b-\g} e^{- i(v_\parallel\cdot k)(t-s)}\Big )\avk^m \notag \\
	&\times w(v)\p_{v_\parallel}^{\g}\hat {\mathcal T}(\hat f,\hat f,\mu^{1/2})(s,k_1, k_2,x_3-(t-s)v_3,v)\Big| \dd s\notag\\ 
	 & \leq L_{\rho, m, |\a|} \sum_{\b\leq \a}\sum_{ \g\leq \b}\binom{\a}{\b}\binom{\b}{\g}L^{|\a-\b|}|\a-\b|! |\b-\g|!\bigg(\frac{8}{\nu_0}\bigg)^{|\b-\g|}\avk^{m+|\b-\g|} \notag \\
	&\times\int_{\max\{0, t-\tb\}}^{t} e^{- \nu(v)(t-s)/4} |w\p_{v_\parallel}^{\g}\hat{\mathcal T}( \hat f, \hat f,\mu^{1/2})(s,k_1, k_2,x_3-(t-s)v_3,v)|\dd s. 
 \end{align*}
In the second last line, we have used \eqref{tole} with replacing $t$ by $t-s$.

Then we apply \eqref{gamma_dexhk} to the nonlinear term and have
\begin{align*}
& L_{\rho, m, |\a|} |w(v)\avk^m\p_{v_\parallel}^\a\eqref{chara:Gamma}|\notag\\
	 & \leq C L_{\rho, m, |\a|} \sum_{\b\leq \a}\sum_{ \g\leq \b}\sum_{\delta\leq \g }\sum_{ \tau \leq\delta}\binom{\a}{\b}\binom{\b}{\g}\binom{\g}{\delta}\binom{\delta}{\tau}L^{|\a-\b|}|\a-\b|! |\b-\g|!\bigg(\frac{8}{\nu_0}\bigg)^{|\b-\g|} \notag\\
	&\times 8^{|\tau|+1}|\tau|!\avk^{m+|\b-\g|}\int_{\max\{0, t-\tb\}}^{t} \nu(v)e^{- \nu(v)(t-s)/4}(1+s)^{-\sigma/2}\dd s\notag \\
	&\times\int_{\mathbb{R}^2}\Big(\sup_{0\leq s\leq t}\Vert (1+s)^{\sigma/2}w\p_{v_\parallel}^{\g-\delta} \hat f(s,k-\ell)\Vert_{ L^\infty_{ x_3,v}}\Big)\Big(\sup_{0\leq s\leq t}\Vert (1+s)^{\sigma/2} w\p_{v_\parallel}^{\delta-\tau} \hat f(s,\ell)\Vert_{ L^\infty_{x_3,v}}\Big)\dd \ell \notag\\
	&\leq C(1+t)^{-\sigma/2}L_{\rho, m, |\a|} \sum_{\b\leq \a}\sum_{ \g\leq \b}\sum_{\delta\leq \g }\sum_{\tau\leq \delta}\sum_{j\leq m+|\b-\g|}\binom{\a}{\b}\binom{\b}{\g}\binom{\g}{\delta}\binom{\delta}{\tau}\binom{ m+|\b-\g|}{j}L^{|\a-\b|}\notag\\
	& \times|\a-\b|!|\b-\g| !\bigg(\frac{8}{\nu_0}\bigg)^{|\b-\g|}8^{|\tau|+1}|\tau|! \int_{\mathbb{R}^2}\Big(\sup_{0\leq s\leq t}\Vert (1+s)^{\sigma/2}w\langle k-\ell\rangle^{j} \p_{v_\parallel}^{\g-\delta} \hat f(s,k-\ell)\Vert_{ L^\infty_{ x_3,v}}\Big)\notag\\
	& \times\Big(\sup_{0\leq s\leq t}\Vert (1+s)^{\sigma/2} w \langle \ell\rangle^{m+|\b-\g|-j} \p_{v_\parallel}^{\delta-\tau} \hat f(s,\ell)\Vert_{ L^\infty_{ x_3,v}}\Big)\dd \ell.
 \end{align*}
In the second inequality, we have applied \eqref{klm} to $\langle k\rangle$, and applied \eqref{eettss} and \eqref{eettssint} to the $\dd s$ integration.
 
Including the Gevrey weight and rearranging the order, we further have
 \begin{align*}
& L_{\rho, m, |\a|} |w\avk^m\p_{v_\parallel}^\a\eqref{chara:Gamma}|
 \notag\\
 & \leq C(1+t)^{-\sigma/2} \int_{\mathbb{R}^2} \Big(\sup_{m,\a\geq 0} L_{\rho, m,|\a|}\sup_{0\leq s\leq t}\Vert (1+s)^{\sigma/2}w\langle k-\ell\rangle^{m} \p_{v_\parallel}^{\a} \hat f(s,k-\ell)\Vert_{ L^\infty_{ x_3,v}}\Big)\notag\\
	& \times \Big(\sup_{m,\a\geq 0} L_{\rho, m,|\a|}\sup_{0\leq s\leq t}\Vert (1+s)^{\sigma/2} w \langle \ell\rangle^{m} \p_{v_\parallel}^{\a} \hat f(s,\ell)\Vert_{ L^\infty_{ x_3,v}}\Big)\dd \ell\sum_{\b\leq \a}\sum_{ \g\leq \b}\sum_{\delta\leq \g }\sum_{\tau\leq \delta}\sum_{j\leq m+|\b-\g|}\binom{\a}{\b}\binom{\b}{\g}\notag\\
	&\times \binom{\g}{\delta} \binom{\delta}{\tau}\binom{ m+|\b-\g|}{j} L^{|\a-\b|}|\a-\b|! |\b-\g|!\bigg(\frac{8}{\nu_0}\bigg)^{|\b-\g|}8^{|\tau|+1}|\tau|! \frac{L_{\rho, m, |\a|}}{L_{\rho, j,|\g-\delta|}L_{\rho, m+|\b-\g|-j,|\delta-\tau|}}.
\end{align*}

Using \eqref{absab}, direct verification of the combinatorics terms shows that
\begin{align}
&\binom{\a}{\b}\binom{\b}{\g}\binom{\g}{\delta}\binom{\delta}{\tau}\binom{ m+|\b-\g|}{j} L^{|\a-\b|}|\a-\b|! |\b-\g|!\bigg(\frac{8}{\nu_0}\bigg)^{|\b-\g|}8^{|\tau|+1}|\tau|! \notag\\
 & \times\frac{L_{\rho, m, |\a|}}{L_{\rho, j,|\g-\delta|}L_{\rho, m+|\b-\g|-j,|\delta-\tau|}}
\notag \\
 &\leq \frac{|\a|!}{|\b|!(|\a|-|\b|)!}\frac{|\b|!}{|\g|!(|\b|-|\g|)!}\frac{|\g|!}{|\delta|!(|\g|-|\delta|)!}\frac{|\delta|!}{|\tau|!(|\delta|-|\tau|)!}\frac{(m+|\b-\g|)!}{j!(m+|\b-\g|-j)!}\notag\\
 & \times L^{|\a-\b|} |\a-\b|!|\b-\g|!\bigg(\frac{8}{\nu_0}\bigg)^{|\b-\g|}8^{|\tau|+1}|\tau|! \frac{(16/\nu_0)^{m}\rho^{m+|\a|+1}(m+2)^2(|\a|+2)^2}{(m+|\a|)!|\a|!} \notag\\
 & \times\frac{(j+|\g-\delta|)!|\g-\delta|!}{(16/\nu_0)^{j}\rho^{j+|\g-\delta|+1}(j+2)^2(|\g-\delta|+2)^2} \notag \\
 & \times \frac{(m+|\b-\g|-j+|\delta-\tau|)!|\delta-\tau|!}{(16/\nu_0)^{m+|\b-\g|-j}\rho^{m+|\b-\g|-j+|\delta-\tau|+1}(m+|\b-\g|-j+2)^2(|\delta-\tau|+2)^2}
\notag\\
&= \frac{(m+2)^2}{(j+2)^2(m+|\b-\g|-j+2)^2}\frac{ (|\a|+2)^2}{(|\g-\delta|+2)^2(|\delta-\tau|+2)^2} ( L\rho)^{|\a-\b|}\bigg(\frac{1}{2}\bigg)^{|\b-\g|} \notag \\
& \times 8^{|\tau|+1}\rho^{|\tau|-1} \frac{(m+|\b-\g|)!}{(m+|\a|)!}\frac{(j+|\g-\delta|)!(m+|\b-\g|-j+|\delta-\tau|)!}{j!(m+|\b-\g|-j)!}. \label{nonlinear_combinatoric}
\end{align}
We compute the last term in \eqref{nonlinear_combinatoric} using \eqref{framn} and \eqref{framn2} as
\begin{align*}%\label{sumtaal}
  &  \frac{(m+|\b-\g|)!}{(m+|\a|)!}\frac{(j+|\g-\delta|)!(m+|\b-\g|-j+|\delta-\tau|)!}{j!(m+|\b-\g|-j)!} \notag \\
  &\leq \frac{(m+|\b-\g|)!}{(m+|\a|)!}\frac{(m+|\b-\g|+|\g-\delta|)!}{(m+|\b-\g|)!} \frac{(m+|\b-\g|+|\delta-\tau|)!}{(m+|\b-\g|)!} \notag \\
  &= \frac{(m+|\b-\g|+|\g-\delta|)!}{(m+|\b-\g|+|\alpha|-|\b-\g|)!} \frac{(m+|\b-\g|+|\delta-\tau|)!}{(m+|\b-\g|)!}\notag \\
  & \leq \frac{(m+|\b-\g|)!}{(m+|\b-\g|+|\a-\b|+ |\delta|)!} \frac{(m+|\b-\g|+|\delta-\tau|)!}{(m+|\b-\g|)!} \leq \frac{(m+|\b-\g|+|\delta-\tau|)!}{(m+|\b-\g|+|\a-\b|+ |\delta|)!} \notag \\
  &\leq \frac{1}{(|\a-\b|+ |\tau|)!} \leq \frac{1}{|\a-\b|! |\tau|!}.
\end{align*}

Then we take summation in $j,\tau,\delta\,\gamma,\beta$ and obtain 
\begin{align*}
&\sum_{\b\leq \a}\sum_{ \g\leq \b}\sum_{\delta\leq \g }\sum_{\tau\leq \delta}\sum_{j\leq m+|\tau|} \eqref{nonlinear_combinatoric}\notag\\
 &\leq 8\rho^{-1}\sum_{\b\leq \a}\frac{ (|\a|+2)^2}{(|\b|+2)^2}\frac{( L\rho)^{|\a-\b|}}{|\a-\b|!}\sum_{ \g\leq \b}\frac{(|\b|+2)^2}{(|\g|+2)^2}\bigg(\frac{1}{2}\bigg)^{|\b-\g|}\sum_{\delta\leq \g }\frac{(|\g|+2)^2}{(|\g-\delta|+2)^2(|\delta|+2)^2} \notag\\
 & \times\sum_{\tau\leq \delta}\frac{(8\rho )^{|\tau|}}{|\tau|!}\frac{ (|\delta|+2)^2}{(|\delta-\tau|+2)^2} \sum_{j\leq m+|\b-\g|}\frac{(m+2)^2}{(j+2)^2(m+|\b-\g|-j+2)^2} \leq C.
 \end{align*}
In the last inequality, we have applied \eqref{able} to $j,\delta$ summation, \eqref{abdiv2} to $\gamma$ and applied \eqref{inmati} to $\beta$ summation, the $\tau$ summation is computed as
 \begin{align}\label{sum_ta}
 &\sum_{\tau\leq \delta}\frac{(8\rho )^{|\tau|}}{|\tau|!}\frac{ (|\delta|+2)^2}{(|\delta-\tau|+2)^2} \leq \sum_{\tau\leq \delta}\frac{(8\rho )^{|\tau|}}{|\tau|!}\frac{ (|\delta-\tau|+2)^2+(|\tau|+2)^2}{(|\delta-\tau|+2)^2}\notag \\&\leq 2\sum_{\tau\leq \delta}\frac{(8\rho )^{|\tau|}}{|\tau|!}(|\tau|+2)^2 \leq C.
 \end{align}
In \eqref{sum_ta}, we have applied \eqref{sum_tau_poly}. 
Therefore, we conclude
\begin{align}
 \label{mixdivr2nonli}
 &L_{\rho, m, |\a|} |w\avk^m\p_{v_\parallel}^\a\eqref{chara:Gamma}|\notag \\&\leq C (1+t)^{-\sigma/2} \int_{\mathbb{R}^2} \Big(\sup_{m,\a\geq 0} L_{\rho, m,|\a|}\sup_{0\leq s\leq t}\Vert (1+s)^{\sigma/2}w\langle k-\ell\rangle^{m} \p_{v_\parallel}^{\a} \hat f(s,k-\ell)\Vert_{ L^\infty_{ x_3,v}}\Big)\notag\\
	& \times \Big(\sup_{m,\a\geq 0} L_{\rho, m,|\a|}\sup_{0\leq s\leq t}\Vert (1+s)^{\sigma/2} w \langle \ell\rangle^{m} \p_{v_\parallel}^{\a} \hat f(s,\ell)\Vert_{ L^\infty_{ x_3,v}}\Big)\dd \ell.
\end{align}

\textit{Step 4: estimate of \eqref{chara:bdr}.}

Leibniz's formula gives
\begin{align}\label{divr2bdyb123}
 & L_{\rho, m, |\a|} | w\avk^m\p_{v_\parallel}^\a\eqref{chara:bdr} |
 \notag\\&=\mathbf{1}_{\tb \leq t} c_{\mu} L_{\rho, m, |\a|} \sum_{\b\leq \a} \sum_{\g\leq \b}\binom{\a}{\b}\binom{\b}{\g} \Big|\Big(\p_{v_\parallel}^{\a-\b}e^{-\nu(v) \tb}\Big)\Big(\p_{v_\parallel}^{\b-\g}e^{- i(v_\parallel\cdot k)\tb}\Big)\Big(\p_{v_\parallel}^{\g}\sqrt{\mu(v)}\Big)
 \notag\\
 &\times w(v)\int_{u_3<0}\avk^m \hat{f}(t^1, k,0,u)\sqrt{\mu(u)}|u_3| \dd u\Big|.
\end{align}
We use \eqref{kmu} and \eqref{tole} with replacing $t$ by $\tb$ to derive
 \begin{align}
 & L_{\rho, m, |\a|} | w\avk^m\p_{v_\parallel}^\a\eqref{chara:bdr} |
 \notag\\
 &\leq L_{\rho, m, |\a|} \sum_{\b\leq \a} \sum_{\g\leq \b}\binom{\a}{\b}\binom{\b}{\g}L^{|\a-\b|}|\a-\b|!|\b-\g|!\bigg(\frac{8}{\nu_0}\bigg)^{|\b-\g|}e^{-\nu(v) \tb/4}8^{|\g|+1}|\g|!\mu^{1/4}(v)w(v)
 \notag\\
 & \times \int_{u_3<0}\avk^{m+|\b-\g|} \hat{f}(t^1, k,0,u)\sqrt{\mu(u)}|u_3| \dd u
\notag\\
&\leq \sum_{\b\leq \a} \sum_{\g\leq \b}\binom{\a}{\b}\binom{\b}{\g}L^{|\a-\b|}|\a-\b|!|\b-\g|!\bigg(\frac{8}{\nu_0}\bigg)^{|\b-\g|}e^{-\nu(v) \tb/4}8^{|\g|+1}|\g|! \frac{ L_{\rho, m, |\a|}}{L_{\rho,m+|\beta-\gamma|,0}} (1+t^1)^{-\sigma/2}\notag\\
& \times L_{\rho,m+|\beta-\gamma|,0}\bigg(\sup_{0\leq t^1\leq t}\Vert (1+t^1)^{\sigma/2}w\avk^{m+|\b-\g|} \hat{f}(t^1)\Vert_{ L^\infty_{ x_3,v}}\bigg)\int_{u_3<0}w^{-1}(u)\sqrt{\mu(u)}|u_3| \dd u \notag\\
& \leq(1+t)^{-\sigma/2}\bigg(\sup_{m\geq 0}L_{\rho, m, 0}\sup_{0\leq t^1\leq t}\Vert (1+t^1)^{\sigma/2}w\avk^{m} \hat{f}(t^1)\Vert_{ L^\infty_{ x_3,v}}\bigg)\sum_{\b\leq \a} \sum_{\g\leq \b}
 \notag\\
& \times \binom{\a}{\b}\binom{\b}{\g}L^{|\a-\b|}|\a-\b|!|\b-\g|!\bigg(\frac{8}{\nu_0}\bigg)^{|\b-\g|}8^{|\g|+1}|\g|!\frac{L_{\rho, m, |\a|}}{ L_{\rho, m+|\b-\g|, 0}}. \label{divr2bdyno}
\end{align}

In the second inequality, we have used $\mu^{1/4}(v)w(v) \leq 1.$ In the last inequality, we have used $e^{-\nu(v) \tb/4}(1+t^1)^{-\sigma/2} \leq C(1+t)^{-\sigma/2} $ and $\int_{u_3<0}w^{-1}(u)\sqrt{\mu(u)}|u_3| \dd u \leq C$.

 We compute
 \begin{align}\label{r2bdysumcomfs}
 &\eqref{divr2bdyno} = \binom{\a}{\b}\binom{\b}{\g}L^{|\a-\b|}|\a-\b|!|\b-\g|!\bigg(\frac{8}{\nu_0}\bigg)^{|\b-\g|}8^{|\g|+1}|\g|!\frac{ L_{\rho, m, |\a|} }{ L_{\rho, m+|\b-\g|, 0} } \notag\\&\leq \frac{|\a|!}{|\b|!(|\a|-|\b|)!}\frac{|\b|!}{|\g|!(|\b|-|\g|)!}L^{|\a-\b|}|\a-\b|!|\b-\g|!\bigg(\frac{8}{\nu_0}\bigg)^{|\b-\g|} 8^{|\g|+1}|\g|!\notag\\&\quad\frac{{(16/\nu_0)}^{m}\rho^{m+|\a|+1}(m+2)^2(|\a|+2)^2}{(m+|\a|)!|\a|!}\frac{(m+|\b-\g|)!}{(16/\nu_0)^{m+|\b-\g|}\rho^{m+|\b-\g|+1}(m+|\b-\g|+2)^2}
\notag\\&\leq8 (L\rho)^{|\a-\b|}\bigg(\frac{1}{2}\bigg)^{|\b-\g|}(8\rho)^{\g} (|\a|+2)^2\frac{(m+|\b-\g|)!}{(m+|\a|)!}\frac{(m+2)^2}{(m+|\b-\g|+2)^2}.
\end{align}
Taking summation in $\alpha,\beta$, we obtain
\begin{align*}
 & \sum_{\b\leq \a} \sum_{\g\leq \b}\eqref{r2bdysumcomfs} \leq 8\sum_{\b\leq \a} \sum_{\g\leq \b}\frac{(|\a|+2)^2}{(|\g|+2)^2}\frac{(L\rho)^{|\a-\b|}}{|\a-\b|!}\bigg(\frac{1}{2}\bigg)^{|\b-\g|}\frac{(8\rho)^{\g}(|\g|+2)^2}{|\g|!}\\
&\leq 8\sum_{\b\leq \a} \sum_{\g\leq \b}\frac{(|\a|+2)^2}{(|\g|+2)^2}\frac{(L\rho)^{|\a-\b|}}{|\a-\b|!}\bigg(\frac{1}{2}\bigg)^{|\b-\g|}\leq C.
\end{align*}
In the last inequality, we have applied the same computation as \eqref{sumconfir}.

Therefore, we conclude
 \begin{align}\label{mixdivr2bdy.add}
L_{\rho, m, |\a|} | w\avk^m\p_{v_\parallel}^\a\eqref{chara:bdr} |\leq C (1+t)^{-\sigma/2}\bigg(\sup_{m\geq 0}L_{\rho, m, 0}\sup_{0\leq t^1\leq t}\Vert (1+t^1)^{\sigma/2}w\avk^{m} \hat{f}(t^1)\Vert_{ L^\infty_{ x_3,v}}\bigg).
\end{align}

We collect \labelcref{mixdivr2infi,mixdivr2kf,mixdivr2nonli,mixdivr2bdy.add} to obtain 
\begin{align*}
& \sup_{m,\a\geq 0}L_{\rho, m, |\a|} \Vert w \langle k\rangle^{m} \p_{v_\parallel}^{\a} \hat f(k)\Vert_{ L^\infty_{ x_3,v}} \leq Ce^{-\nu_0 t/4}\sup_{m, \a\geq 0}L_{\rho, m, |\a|} \Vert w \langle k \rangle ^{m}\p_{v_\parallel}^{\a}\hat{ f}_0 \Vert_{ L^\infty_{x_3,v}}
\\&+ C(1+t)^{-\sigma/2}\bigg(\sup_{m\geq 0}L_{\rho, m,0}\sup_{0\leq s\leq t}\Vert(1+s)^{\sigma/2} w\langle k \rangle^{m} \hat f(s)\Vert_{ L^\infty_{ x_3,v}}\bigg)\notag\\&+\rho C (1+t)^{-\sigma/2}\bigg(\sup_{m, \a\geq 0}L_{\rho, m, |\a|} \sup_{0\leq s\leq t}
 \Vert(1+s)^{\sigma/2} w \avk^m\p_{v_\parallel}^{\a} \hat f(s)\Vert_{L^\infty_{x_3,v}}\bigg)\\&+C (1+t)^{-\sigma/2} \int_{\mathbb{R}^2} \Big(\sup_{m,\a\geq 0} L_{\rho, m,|\a|}\sup_{0\leq s\leq t}\Vert (1+s)^{\sigma/2}w\langle k-\ell\rangle^{m} \p_{v_\parallel}^{\a} \hat f(s,k-\ell)\Vert_{ L^\infty_{ x_3,v}}\Big)\notag\\
	& \times \Big(\sup_{m,\a\geq 0} L_{\rho, m,|\a|}\sup_{0\leq s\leq t}\Vert (1+s)^{\sigma/2} w \langle \ell\rangle^{m} \p_{v_\parallel}^{\a} \hat f(s,\ell)\Vert_{ L^\infty_{ x_3,v}}\Big)\dd \ell.
\end{align*}
Finally, we conclude that, for any $T$ and some constant $C$ that does not depend on $T$, 
\begin{align*}
& \sup_{m,\a\geq 0}L_{\rho, m, |\a|} \Vert (1+t)^{\sigma/2}w \langle k\rangle^{m} \p_{v_\parallel}^{\a} \hat f(k)\Vert_{ L^\infty_{T, x_3,v}} \\&\leq C\sup_{m, \a\geq 0}L_{\rho, m, |\a|} \Vert w \langle k \rangle ^{m}\p_{v_\parallel}^{\a}\hat{ f}_0 \Vert_{ L^\infty_{x_3,v}}
+ C\bigg(\sup_{m\geq 0}L_{\rho, m,0}\Vert(1+t)^{\sigma/2} w\langle k \rangle^{m} \hat f\Vert_{ L^\infty_{T, x_3,v}}\bigg)\notag\\&+\rho C \bigg(\sup_{m, \a\geq 0}L_{\rho, m, |\a|} 
 \Vert(1+t)^{\sigma/2} w \avk^m\p_{v_\parallel}^{\a} \hat f\Vert_{L^\infty_{T, x_3,v}}\bigg)\\&+C \int_{\mathbb{R}^2} \Big(\sup_{m,\a\geq 0} L_{\rho, m,|\a|}\Vert (1+t)^{\sigma/2}w\langle k-\ell\rangle^{m} \p_{v_\parallel}^{\a} \hat f(k-\ell)\Vert_{ L^\infty_{T, x_3,v}}\Big)\notag\\
	& \times \Big(\sup_{m,\a\geq 0} L_{\rho, m,|\a|}\Vert (1+t)^{\sigma/2} w \langle \ell\rangle^{m} \p_{v_\parallel}^{\a} \hat f(\ell)\Vert_{ L^\infty_{ T, x_3,v}}\Big)\dd \ell.
\end{align*}
We take $L^1_k-$norm and use
Young's convolution inequality to obtain 
\begin{align*}
& \sup_{m,\a\geq 0}L_{\rho, m, |\a|} \Vert (1+t)^{\sigma/2}w \langle k\rangle^{m} \p_{v_\parallel}^{\a} \hat f(k)\Vert_{L^1_k L^\infty_{T, x_3,v}}\\& \leq C\sup_{m, \a\geq 0}L_{\rho, m, |\a|} \Vert w \langle k \rangle ^{m}\p_{v_\parallel}^{\a}\hat{ f}_0 \Vert_{L^1_k L^\infty_{x_3,v}}
+ C\bigg(\sup_{m\geq 0}L_{\rho, m,0}\Vert(1+t)^{\sigma/2} w\langle k \rangle^{m} \hat f\Vert_{L^1_k L^\infty_{T, x_3,v}}\bigg)\notag\\&+\rho C \bigg(\sup_{m, \a\geq 0}L_{\rho, m, |\a|} 
 \Vert(1+t)^{\sigma/2} w \avk^m\p_{v_\parallel}^{\a} \hat f\Vert_{L^1_kL^\infty_{T, x_3,v}}\bigg)\\&+C \bigg(\sup_{m,\a\geq 0} L_{\rho, m,|\a|}\Vert (1+t)^{\sigma/2}w\langle k\rangle^{m} \p_{v_\parallel}^{\a} \hat f(k)\Vert_{L^1_k L^\infty_{T, x_3,v}}\bigg)^2.
\end{align*}
We absorb the third line with $\rho\leq 1/(2C)$.
Combining with \eqref{f_estxxx}, we conclude the proof of Proposition \ref{prop:inftyxvr2}.
\end{proof}

\begin{proof}[\textbf{Proof of Theorem \ref{thm:apriestxvr2}}]
With the a priori estimate in Proposition \ref{prop:inftyxvr2}, the proof of the theorem is similar to Theorem \ref{thm:l1k_lpkexg}. So we omit the details.
\end{proof}

\appendix

\section{Elementary identities and inequalities}\label{appendi}

To prove Lemma \ref{estdivk1ta}, we need the following identity.  

\begin{lemma}\label{lemma:n_derivative}
 Suppose $f'(x)=g(x), \ g'(x)=h(x), \ h'(x)=0$. The $n$-th order derivative of $(e^f)^{(n)}$ has the following expression:
 \begin{equation}\label{induc}
(e^f)^{(n)}=e^f\sum_{m=0}^{[n/2]}\frac{n!}{m!(n-2m)!}\bigg(\frac{h}{2}\bigg)^mg^{n-2m}.
\end{equation} 
\end{lemma}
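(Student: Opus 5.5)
Since $h'=0$ forces $g$ to be affine in $x$ and $f$ to be quadratic, the identity \eqref{induc} is the exponential analogue of the Hermite-polynomial formula. I will prove it by induction on $n$, with a generating-function check as a cross-validation. The cases $n=0$ and $n=1$ are immediate: $(e^f)'=e^f g$, and the sum contains only the $m=0$ term, which equals $g$.

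For the inductive step, I assume \eqref{induc} holds for $n$ and differentiate once. The chain rule gives $(e^f)'=g e^f$ and $(g^{n-2m})'=(n-2m)g^{n-2m-1}h$, while $h$ is constant, so
\begin{align*}
(e^f)^{(n+1)}=e^f\sum_{m=0}^{[n/2]}\frac{n!}{m!(n-2m)!}\Big(\frac{h}{2}\Big)^m\Big[g^{n+1-2m}+(n-2m)h\,g^{n-1-2m}\Big].
\end{align*}
In the second part I shift the index $m\to m-1$. Since $h\cdot (h/2)^{m-1}=2(h/2)^m$, the coefficient of $(h/2)^m g^{n+1-2m}$ coming from that part is $\frac{2\,n!}{(m-1)!(n-2m+1)!}$. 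The first part contributes $\frac{n!}{m!(n-2m)!}$. Putting both over the common denominator $m!(n+1-2m)!$, their sum is
\begin{align*}
\frac{n!\,[(n+1-2m)+2m]}{m!(n+1-2m)!}=\frac{(n+1)!}{m!(n+1-2m)!},
\end{align*}
which is exactly the claimed coefficient at level $n+1$.

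The only delicate point is the range of summation. The upper limit moves from $[n/2]$ to $[(n+1)/2]$, and this needs a parity check. When $n$ is odd, the top index $m=(n+1)/2$ arises only from the shifted second sum; the first sum contributes nothing there, which is consistent with $1/(-1)!=0$. When $n$ is even, the term with $n-2m=0$ carries the factor $(n-2m)=0$ and so drops out of the second sum, which is consistent. Adopting the convention $1/j!=0$ for $j<0$ handles both parities uniformly.

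As an alternative check, I would Taylor-expand $e^{f(x+s)}=e^{f(x)}e^{gs+hs^2/2}$ in $s$. Reading off the coefficient of $s^n$ gives \eqref{induc} directly, confirming the induction.
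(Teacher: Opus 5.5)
Your proof is correct and is essentially the paper's argument: induction on $n$, differentiating the level-$n$ formula, shifting $m\to m-1$ in the term produced by $(g^{n-2m})'=(n-2m)g^{n-2m-1}h$, and combining the two coefficients over $m!(n+1-2m)!$ using $(n+1-2m)+2m=n+1$. The paper treats $n=2l$ and $n=2l+1$ as separate cases and splits off the endpoint terms by hand, whereas your convention $1/j!=0$ for $j<0$ handles both parities at once; your Taylor check, based on $e^{f(x+s)}=e^{f(x)}e^{gs+hs^2/2}$, is also a valid one-line proof on its own.
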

\begin{proof}
We use induction on $n$ to prove \eqref{induc}.
When $n=0$, $$LHS=e^f=RHS.$$ When $n=1$, $$LHS=(e^f)'=e^fg=RHS.$$ 
Suppose 
\eqref{induc}
holds true for $n$. We will prove that %it still holds true for $p=n+1$. 
 \begin{equation*}
(e^f)^{(n+1)}=e^f\sum_{m=0}^{[(n+1)/2]}\frac{(n+1)!}{m!(n+1-2m)!}\bigg(\frac{h}{2}\bigg)^mg^{n+1-2m}.
\end{equation*}

When $n=2l, \ l\in \mathbb{Z}_+$,
\begin{align*}
(e^f)^{n+1}&=\frac{\dd }{\dd x}\bigg[e^f\sum_{m=0}^{l}\frac{(2l)!}{m!(2l-2m)!}\bigg(\frac{h}{2}\bigg)^mg^{2l-2m}\bigg]
\\&=\frac{\dd}{\dd x}\bigg[e^f\frac{(2l)!}{l!}\bigg(\frac{h}{2}\bigg)^l+e^f\sum_{m=0}^{l-1}\frac{(2l)!}{m!(2l-2m)!}\bigg(\frac{h}{2}\bigg)^mg^{2l-2m}\bigg]
\\&=e^f\frac{(2l)!}{l!}\bigg(\frac{h}{2}\bigg)^lg+e^f\sum_{m=0}^{l-1}\frac{(2l)!}{m!(2l-2m)!}\bigg(\frac{h}{2}\bigg)^mg^{2l-2m+1}\\&+2e^f\sum_{m=0}^{l-1}\frac{(2l)!}{m!(2l-2m-1)!}\bigg(\frac{h}{2}\bigg)^{m+1}g^{2l-2m-1} \\
&=e^f\frac{(2l)!}{l!}\bigg(\frac{h}{2}\bigg)^lg+e^f\sum_{m=0}^{l-1}\frac{(2l)!}{m!(2l-2m)!}\bigg(\frac{h}{2}\bigg)^mg^{2l-2m+1}\\&+2e^f\sum_{m=1}^{l}\frac{(2l)!}{(m-1)!(2l-2m+1)!}\bigg(\frac{h}{2}\bigg)^{m}g^{2l-2m+1}.
\end{align*}
For the last equality, we applied the change of variable $m+1\rightarrow m$ for the last term.

For the second term in the second last line, we isolate $m=0$, in the last line, we isolate $m=l$, then we further have
\begin{align*}
(e^f)^{(n+1)}&=e^fg^{2l+1}+e^f\frac{2(2l)!}{(l-1)!}\bigg(\frac{h}{2}\bigg)^lg+e^f\frac{(2l)!}{l!}\bigg(\frac{h}{2}\bigg)^lg\\&+e^f\sum_{m=1}^{l-1}\bigg[\frac{(2l)!}{m!(2l-2m)!}+\frac{2(2l)!}{(m-1)!(2l-2m+1)!}\bigg]\bigg(\frac{h}{2}\bigg)^mg^{2l-2m+1}
\\&=e^fg^{2l+1}+e^f\frac{(2l+1)!}{l!}\bigg(\frac{h}{2}\bigg)^lg+e^f\sum_{m=1}^{l-1}\frac{(2l+1)!}{m!(2l+1-2m)!}\bigg(\frac{h}{2}\bigg)^mg^{2l-2m+1}
\\&=e^f\sum_{m=0}^{l}\frac{(2l+1)!}{m!(2l+1-2m)!}\bigg(\frac{h}{2}\bigg)^mg^{2l-2m+1}
\\&=e^f\sum_{m=0}^{[(n+1)/2]}\frac{(n+1)!}{m!(n+1-2m)!}\bigg(\frac{h}{2}\bigg)^mg^{n+1-2m}.
\end{align*}
In the second equality, we have used 
\begin{equation*}
\frac{2(2l)!}{(l-1)!}+\frac{(2l)!}{l!}=\frac{2l(2l)!}{l!}+\frac{(2l)!}{l!}=\frac{(2l+1)!}{l!},
\end{equation*}
and 
\begin{align*}
 &\frac{(2l)!}{m!(2l-2m)!}+\frac{2(2l)!}{(m-1)!(2l-2m+1)!} \\
 &=\frac{(2l-2m+1)(2l)!}{m!(2l-2m+1)!}+\frac{2m(2l)!}{m!(2l-2m+1)!} =\frac{(2l+1)!}{m!(2l+1-2m)!}.
\end{align*}

When $n=2l+1, \ l\in \mathbb{Z}_+$,
\begin{align*}
(e^f)^{(n+1)}&=\frac{\dd }{\dd x}\bigg[e^f\sum_{m=0}^{l}\frac{(2l+1)!}{m!(2l+1-2m)!}\bigg(\frac{h}{2}\bigg)^mg^{2l+1-2m}\bigg]
\\
&=e^f\sum_{m=0}^{l}\frac{(2l+1)!}{m!(2l+1-2m)!}\bigg(\frac{h}{2}\bigg)^mg^{2l+2-2m}+2e^f\sum_{m=0}^{l}\frac{(2l+1)!}{m!(2l-2m)!}\bigg(\frac{h}{2}\bigg)^{m+1}g^{2l-2m}\\
&=e^f\sum_{m=0}^{l}\frac{(2l+1)!}{m!(2l+1-2m)!}\bigg(\frac{h}{2}\bigg)^mg^{2l+2-2m}\\&+2e^f\sum_{m=1}^{l+1}\frac{(2l+1)!}{(m-1)!(2l+2-2m)!}\bigg(\frac{h}{2}\bigg)^{m}g^{2l+2-2m}.
\end{align*}
In the last inequality, we have applied the change of variable $m+1\rightarrow m$ for the second term.

Isolating $m=0$ in the second last line and $m=l+1$ in the last line, we further obtain
\begin{align*}
(e^f)^{(n+1)}&=e^fg^{2l+2}+2e^f\frac{(2l+1)!}{l!}\bigg(\frac{h}{2}\bigg)^{l+1}\\&+e^f\sum_{m=1}^{l}\bigg[\frac{(2l+1)!}{m!(2l+1-2m)!}+\frac{2(2l+1)!}{(m-1)!(2l+2-2m)!}\bigg]\bigg(\frac{h}{2}\bigg)^mg^{2l+2-2m}
\\&=e^fg^{2l+2}+e^f\frac{(2l+2)!}{(l+1)!}\bigg(\frac{h}{2}\bigg)^{l+1}+e^f\sum_{m=1}^{l}\frac{(2l+2)!}{m!(2l+2-2m)!}\bigg(\frac{h}{2}\bigg)^mg^{2l+2-2m}
\\&=e^f\sum_{m=0}^{l+1}\frac{(2l+2)!}{m!(2l+2-2m)!}\bigg(\frac{h}{2}\bigg)^mg^{2l+2-2m}
\\&=e^f\sum_{m=0}^{[(n+1)/2]}\frac{(n+1)!}{m!(n+1-2m)!}\bigg(\frac{h}{2}\bigg)^mg^{n+1-2m}.
\end{align*} 
In the second equality, we have used 
\begin{align*}
2\frac{(2l+1)!}{l!}=\frac{(2l+2)!}{(l+1)!},
\end{align*}
and 
\begin{align*}
\frac{(2l+1)!}{m!(2l+1-2m)!}+\frac{2(2l+1)!}{(m-1)!(2l+2-2m)!}=&\frac{(2l+2-2m)(2l+1)!}{m!(2l+2-2m)!}+\frac{2m(2l+1)!}{m!(2l+2-2m)!}\\&=\frac{(2l+2)!}{m!(2l+2-2m)!}.
\end{align*}
We conclude the proof of Lemma \ref{lemma:n_derivative}.
\end{proof}

Moreover, for the proof of Lemma \ref{estdivk1ta}, we also need the following estimates. 

\begin{lemma}\label{lemma:derk12} 
It holds that
\begin{align}\label{derk1} 
&\sum_{m=0}^{[\tau_2/2]}\frac{1}{m!(\tau_2-2m)!}\sum_{j=0}^{[\tau_1/2]}\frac{1}{j!(\tau_1-2j)!}\bigg|\bigg(\frac{1}{4}\bigg)^m\bigg(\frac{1}{4}\bigg)^j\notag\\&\times\int_{\mathbb{R}^3}\frac{w(v)}{w(u)} \p_{v_\parallel}^\iota\bigg[|v-u|e^{-\frac{1}{4}(|v|^2+|u|^2)}\bigg(-\frac{v_2+u_2}{2}\bigg)^{\tau_2-2m}\bigg(-\frac{v_1+u_1}{2}\bigg)^{\tau_1-2j}\bigg]\dd u\bigg|\leq C\nu(v),
\end{align}
and
\begin{align}\label{derk2} 
&\sum_{m=0}^{[\tau_2/2]}\frac{1}{m!}\sum_{j=0}^{\tau_1}\frac{1}{(\tau_1-j)!}\frac{1}{[\tau_2-2m-(\tau_1-j)]!}\sum_{p=0}^{[j/2]}\frac{1}{p!(j-2p)!}\notag\\&\times \bigg|\int_{\mathbb{R}^3}\frac{w(v)}{w(u)} \p_{v_\parallel}^\iota\bigg[|v-u|^{-1}e^{-\frac{1}{8}|v-u|^{2}}\bigg(\frac{C_2}{2}\bigg)^me^A\bigg(\frac{C_1}{2}\bigg)^pB_1^{j-2p}B_2^{\tau_2-2m-(\tau_1-j)}D^{\tau_1-j}\bigg] \dd u \bigg|
\notag\\& \leq C \nu(v) .
\end{align}
\end{lemma}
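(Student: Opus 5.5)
The plan is to bound the integrand of each term pointwise and then integrate in $u$ against the weight ratio $w(v)/w(u)$, using \eqref{k_theta}-type bounds. The factorial prefactors will be absorbed by the Gaussian factors that are already present. The estimate needed is $\int (w(v)/w(u))\,G(v,u)\,\dd u\lesssim 1\lesssim\nu(v)$, where $G$ collects the polynomial factors produced by the derivatives.

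\textbf{Estimate \eqref{derk1}.} First I would apply $\p_{v_\parallel}^\iota$ by the product rule. It lands on one of three factors:
\begin{itemize}
\item on $|v-u|$, giving a bounded factor;
\item on $e^{-\frac14(|v|^2+|u|^2)}$, giving a factor $-v_\iota/2$;
\item on one of the powers $\big(-\frac{v_i+u_i}{2}\big)^{n}$, giving $\frac{n}{2}\big(-\frac{v_i+u_i}{2}\big)^{n-1}$.
\end{itemize}
Set $n_2=\tau_2-2m$ and $n_1=\tau_1-2j$. I would then use the elementary bound
\[
|s|^{n}e^{-\epsilon s^2}\le (n/(2\epsilon e))^{n/2}\le C_\epsilon^{\,n}\sqrt{n!},
\]
with $s=(v_i+u_i)/2$. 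The Gaussian $e^{-\frac14(|v|^2+|u|^2)}$ is split as $e^{-\frac18(|v|^2+|u|^2)}\cdot e^{-\frac1{16}|v+u|^2}$, which uses $|v|^2+|u|^2\ge \frac12|v+u|^2$. Then each term is bounded by $C^{n_1+n_2}\sqrt{n_1!\,n_2!}\,(n_1+n_2+1)\,(1+|v-u|)\,e^{-\frac18(|v|^2+|u|^2)}$. After dividing by $m!\,n_2!\,j!\,n_1!$ and multiplying by $4^{-m-j}$, the double sum $\sum C^{n}/\sqrt{n!}$ converges uniformly in $\tau$. Finally, $\int \frac{w(v)}{w(u)}(1+|v-u|)e^{-\frac18(|v|^2+|u|^2)}\,\dd u\lesssim e^{(\theta-\frac18)|v|^2}(1+|v|)\lesssim 1$ because $\theta<1/8$. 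This gives $\le C\le C\nu(v)/\nu_0$.

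\textbf{Estimate \eqref{derk2}.} Here the ingredients are pointwise bounds on the quantities in \eqref{ABCDEF}. We have $|C_1|,|C_2|,|D|\le 1$. Setting $\zeta=\frac{|v|^2-|u|^2}{|v-u|}$, we have $|B_i|\le\frac12|\zeta|$ and $A=-\zeta^2/8$. So
\[
|B_1|^{a}|B_2|^{b}e^{A}\le |\zeta/2|^{a+b}e^{-\zeta^2/8}\le C^{a+b}\sqrt{(a+b)!}\,e^{-\zeta^2/16}.
\]
Next I would apply $\p_{v_\parallel}^\iota$ to the full product. Derivatives of $|v-u|^{-1}$ and of $C_1,C_2,D$ cost a factor $|v-u|^{-1}$. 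Derivatives of $B_i$ or $A$ produce $|\p_v\zeta|\lesssim 1+|\zeta|/|v-u|$ times powers that are again absorbed into the Gaussian in $\zeta$; this uses $\p_{v}(|v|^2-|u|^2)=2v$ and $|v|\le|u|+|v-u|$, handled more carefully as in Grad's computation. A derivative hitting $B^{a}$ produces the factor $a$, which is polynomial and absorbable. The upshot is a pointwise bound
\[
C^{N}\sqrt{N!}\,(N+1)\,(|v-u|^{-1}+|v-u|^{-2})\,e^{-\frac18|v-u|^2-\frac1{16}\zeta^2},\qquad N=j-2p+\tau_2-2m-(\tau_1-j).
\]
Weighted by $1/(m!\,(\tau_1-j)!\,(\tau_2-2m-\tau_1+j)!\,p!\,(j-2p)!)$, the sum over $(m,j,p)$ is bounded uniformly, since $\sqrt{N!}$ is dominated by the product of the factorials in the denominator up to a factor $2^{N}$ by \eqref{jiechenmn}.

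\textbf{Main obstacle.} It remains to show $\int \frac{w(v)}{w(u)}|v-u|^{-2}e^{-\frac18|v-u|^2-\frac1{16}\zeta^2}\,\dd u\lesssim 1$. The singularity $|v-u|^{-2}$ is integrable in $\R^3$. The weight ratio is controlled exactly as in the proof of \eqref{k_theta}: writing $u=v+\eta$, the exponent $\theta(|v|^2-|u|^2)-\frac18|\eta|^2-\frac1{16}\zeta^2$ is bounded above by $-c|\eta|^2$ because $\theta<1/8$ is small relative to the combined quadratic form. I expect this step to be the main obstacle: the derivative factor $\p_v\zeta$ could bring a growing $|v|$, and the halved Gaussian constants must still beat $\theta$. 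If the constants are tight, I would shrink $\theta$ or reserve a portion of $e^{-\zeta^2/8}$ before the $\sqrt{N!}$ absorption.
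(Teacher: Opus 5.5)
Your overall scheme is the paper's. You apply the product rule, absorb the powers of $(v_i+u_i)/2$ and of $\zeta=(|v|^2-|u|^2)/|v-u|$ into Gaussians at a factorial cost (your $C^n\sqrt{n!}$ plays the role of the paper's $[n/2]!\,C^{n/2}$ from Lemma~\ref{lemma:enut_control} and \eqref{MN2}), cancel against the denominators, and finish with a Grad-type bound on the $u$-integral. However, one step in your proof of \eqref{derk2} is false: the claim $|\partial_v\zeta|\lesssim 1+|\zeta|/|v-u|$. At fixed $u$,
\[
\partial_{v_1}\zeta=\frac{2v_1}{|v-u|}-\frac{\zeta\,(v_1-u_1)}{|v-u|^2},
\]
and the first term is not controlled by $|v-u|$ and $\zeta$. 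Take $v=(R,0,0)$ and $u=(R,\epsilon,0)$. Then $\zeta=-\epsilon$, $w(v)/w(u)\approx 1$, and both $e^{-\frac18|v-u|^2}$ and $e^{-\zeta^2/16}$ are $\approx 1$, yet $\partial_{v_1}\zeta=2R/\epsilon$. So the terms where the derivative hits $e^A$, $B_1$ or $B_2$ (the paper's $\mathcal{K}_4,\mathcal{K}_6,\mathcal{K}_7$) carry an extra factor $|v_1|$. No choice of Gaussian constants or of $\theta$ (your proposed remedies) absorbs it pointwise, so your intended bound $\int\frac{w(v)}{w(u)}G\,\mathrm{d}u\lesssim 1$ does not follow from your argument. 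The same factor $|v_1|$ appears in \eqref{derk1}; you list it and then drop it from the pointwise bound. That omission is harmless there, since $e^{(\theta-\frac18)|v|^2}(1+|v|)^2\lesssim 1$.

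The fix is what the paper does: keep $|v_1|$ and bound it by $C\nu(v)$ using $\nu(v)\ge\nu_0\sqrt{1+|v|^2}$. This is exactly why the lemma has $\nu(v)$ on the right-hand side. Alternatively, you can recover the factor from the integral: with $\eta=u-v$, $\int|\eta|^{-2}e^{-c|\eta|^2-c(v\cdot\eta)^2/|\eta|^2}\,\mathrm{d}\eta\lesssim(1+|v|)^{-1}$. You also need to note that the multiplicities $m$, $p$, $\tau_1-j$ produced when the derivative hits $C_2^m$, $C_1^p$, $D^{\tau_1-j}$ are absorbed by $1/m!$, $1/p!$, $1/(\tau_1-j)!$. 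With these changes your argument closes.

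The step you flagged as the main obstacle is fine as stated. Write $u=v+\eta$, $x=|\eta|$, $y=v\cdot\eta/|\eta|$, so that $\zeta=-(x+2y)$ and
\[
\theta(|v|^2-|u|^2)-\tfrac{1}{8}|\eta|^2-\tfrac{1}{16}\zeta^2=-\big(\tfrac{3}{16}+\theta\big)x^2-\big(\tfrac{1}{4}+2\theta\big)xy-\tfrac{1}{4}y^2 .
\]
This form is negative definite iff $\theta^2<\frac{1}{32}$, so $\theta<\frac18$ suffices with no shrinking (the paper simply cites Guo's estimate here). The resulting bound $e^{-c(x^2+y^2)}$ is also what gives the $(1+|v|)^{-1}$ gain mentioned above.
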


\begin{proof}
Without loss of generality, we set $\iota = (1, 0)$. We begin by proving the first inequality \eqref{derk1}. We take derivative and have 
 \begin{align*}
& \p_{v_\parallel}^\iota\bigg[|v-u|e^{-\frac{1}{4}(|v|^2+|u|^2)}\bigg(-\frac{v_2+u_2}{2}\bigg)^{\tau_2-2m}\bigg(-\frac{v_1+u_1}{2}\bigg)^{\tau_1-2j}\bigg]
\\&= e^{-\frac{1}{4}(|v|^2+|u|^2)}\bigg(-\frac{v_2+u_2}{2}\bigg)^{\tau_2-2m}\bigg[\frac{v_1-u_1}{|v-u|}\bigg(-\frac{v_1+u_1}{2}\bigg)^{\tau_1-2j}-\frac{v_1|v-u|}{2}\bigg(-\frac{v_1+u_1}{2}\bigg)^{\tau_1-2j}\\&-\frac{\tau_1-2j}{2}|v-u|\bigg(-\frac{v_1+u_1}{2}\bigg)^{\tau_1-2j-1}\bigg] 
%\\&\leq e^{-\frac{1}{4}(|v|^2+|u|^2)}\big(v^2+u^2\big)^{(\tau_2-2m)/2}\\&\times \Big[\big(v^2+u^2\big)^{(\tau_1-2j)/2}+v_1\big(v^2+u^2\big)^{(\tau_1-2j+1)/2}+(\tau_1-2j)\big(v^2+u^2\big)^{(\tau_1-2j)/2}\Big]
.
\end{align*}
%where in the last inequality 
Then, we use $[(v_i+u_i)/2]^2\leq |v|^2+|u|^2,\ i=1,2$ and $|v-u|^2/2\leq |v|^2+|u|^2$ to derive
\begin{align*}
&\bigg|\p_{v_\parallel}^\iota\bigg[|v-u|e^{-\frac{1}{4}(|v|^2+|u|^2)}\bigg(-\frac{v_2+u_2}{2}\bigg)^{\tau_2-2m}\bigg(-\frac{v_1+u_1}{2}\bigg)^{\tau_1-2j}\bigg]\bigg|
\\&\leq e^{-\frac{1}{4}(|v|^2+|u|^2)}\big(|v|^2+|u|^2\big)^{(\tau_2-2m)/2} \Big[\big(|v|^2+|u|^2\big)^{(\tau_1-2j)/2}+|v_1|\big(|v|^2+|u|^2\big)^{(\tau_1-2j+1)/2}\\&+(\tau_1-2j)\big(|v|^2+|u|^2\big)^{(\tau_1-2j)/2}\Big]
\\&\leq C \nu(v)e^{-\frac{1}{8}(|v|^2+|u|^2)}[(\tau_2-2m)/2]!16^{(\tau_2-2m)/2}(1+\tau_1-2j)[(\tau_1-2j+1)/2]!16^{(\tau_1-2j+1)/2}
\\&\leq C \nu(v)e^{-\frac{1}{8}(|v|^2+|u|^2)}[(\tau_2-2m)/2]!16^{(\tau_2-2m)/2}[(\tau_1-2j+2)/2]!16^{(\tau_1-2j+1)/2}.
\end{align*}
Here in the fourth line, we use Lemma \ref{lemma:enut_control} with replacing $\nu(v)t$ and 8 by $|v|^2+|u|^2$ and 16, respectively.
%\begin{equation*}
% e^{-\frac{1}{16}(|v|^2+|u|^2)}\big(|v|^2+|u|^2\big)^{i}= e^{-\frac{1}{16}(|v|^2+|u|^2)}\bigg(\frac{|v|^2+|u|^2}{16}\bigg)^{i}16^i\leq 16^ii!, \ \forall i\in \mathbb{Z}^+.
%\end{equation*}

This leads to
 \begin{align*}
& \int_{\mathbb{R}^3}\frac{w(v)}{w(u)} \p_{v_\parallel}^\iota\bigg[|v-u|e^{-\frac{1}{4}(|v|^2+|u|^2)}\bigg(-\frac{v_2+u_2}{2}\bigg)^{\tau_2-2m}\bigg(-\frac{v_1+u_1}{2}\bigg)^{\tau_1-2j}\bigg]\dd u\bigg| 
\\&\leq C\nu(v)[(\tau_2-2m)/2]!16^{(\tau_2-2m)/2}[(\tau_1-2j+2)/2]!16^{(\tau_1-2j+1)/2}\bigg|\int_{\mathbb{R}^3}\frac{w(v)}{w(u)}e^{-\frac{1}{8}(|v|^2+|u|^2)} \dd u\bigg| \\&\leq C\nu(v)[(\tau_2-2m)/2]!16^{(\tau_2-2m)/2}[(\tau_1-2j+2)/2]!16^{(\tau_1-2j+1)/2}.
\end{align*}

Thus, we conclude 
\begin{align*}
&\sum_{m=0}^{[\tau_2/2]}\frac{1}{m!(\tau_2-2m)!}\sum_{j=0}^{[\tau_1/2]}\frac{1}{j!(\tau_1-2j)!}\bigg|\bigg(\frac{1}{4}\bigg)^m\bigg(\frac{1}{4}\bigg)^j\\&\times\int_{\mathbb{R}^3}\frac{w(v)}{w(u)} \p_{v_\parallel}^\iota\bigg[|v-u|e^{-\frac{1}{4}(|v|^2+|u|^2)}\bigg(-\frac{v_2+u_2}{2}\bigg)^{\tau_2-2m}\bigg(-\frac{v_1+u_1}{2}\bigg)^{\tau_1-2j}\bigg]\dd u\bigg|
%\\&\leq C\nu(v)\sum_{m=0}^{[\tau_2/2]}\frac{1}{m!(\tau_2-2m)!}\sum_{j=0}^{[\tau_1/2]}\frac{1}{j!(\tau_1-2j)!}\bigg(\frac{1}{4}\bigg)^m\bigg(\frac{1}{4}\bigg)^j\\&\times[(\tau_2-2m)/2]!16^{(\tau_2-2m)/2}[(\tau_1-2j+2)/2]!16^{(\tau_1-2j+1)/2}
\\&\leq C\nu(v)\sum_{m=0}^{[\tau_2/2]}\frac{1}{m!}\bigg(\frac{1}{4}\bigg)^m\sum_{j=0}^{[\tau_1/2]}\frac{1}{j!}\bigg(\frac{1}{4}\bigg)^j\frac{[(\tau_2-2m)/2]!16^{(\tau_2-2m)/2}}{(\tau_2-2m)!}\frac{[(\tau_1-2j+2)/2]!16^{(\tau_1-2j+1)/2}}{(\tau_1-2j)!}
\\&\leq C\nu(v) .
\end{align*}
Here in the last inequality, we use 
\begin{align*}
&\frac{[(\tau_2-2m)/2]!16^{(\tau_2-2m)/2}}{(\tau_2-2m)!}\frac{[(\tau_1-2j+2)/2]!16^{(\tau_1-2j+1)/2}}{(\tau_1-2j)!}\leq C,
\end{align*}
which is following from \eqref{MN2}.

Next, we prove the second inequality \eqref{derk2}. Recall the notation defined in \eqref{ABCDEF}, we compute the derivatives as
\iffalse
\begin{align*}
& \p_{v_1}|v-u|^{-1}=-\frac{v_1-u_1}{|v-u|^{3}},
\\& \p_{v_1}e^{-\frac{1}{8}|v-u|^{2}}=-\frac{1}{4}e^{-\frac{1}{8}|v-u|^{2}}(v_1-u_1),
\\& \p_{v_1}\bigg(\frac{C_2}{2}\bigg)^m=m\bigg(\frac{C_2}{2}\bigg)^{m-1}\frac{(v_1-u_1)(v_2-u_2)^2}{|v-u|^{4}},
\\& \p_{v_1}e^A=e^A\bigg[-\frac{v_1(|v|^2-|u|^2)}{2|v-u|^{2}}+\frac{(v_1-u_1)(|v|^2-|u|^2)^2}{4|v-u|^{4}}\bigg],
\\& \p_{v_1}\bigg(\frac{C_1}{2}\bigg)^p=p\bigg[-\frac{v_1-u_1}{|v-u|^{2}}+\frac{(v_1-u_1)^3}{|v-u|^{4}}\bigg]\bigg(\frac{C_1}{2}\bigg)^{p-1},
\\& \p_{v_1}B_1^{j-2p}=(j-2p)\bigg[-\frac{v_1(v_1-u_1)}{|v-u|^{2}}-\frac{|v|^2-|u|^2}{2|v-u|^{2}}+\frac{(v_1-u_1)^2(|v|^2-|u|^2)}{|v-u|^{4}}\bigg]B_1^{j-2p-1},
\\& \p_{v_1}B_2^{\tau_2-2m-(\tau_1-j)}=[\tau_2-2m-(\tau_1-j)]B_2^{\tau_2-2m-(\tau_1-j)-1}\\&\qquad\qquad\qquad\qquad \times \bigg[-\frac{v_1(v_2-u_2)}{|v-u|^{2}}+\frac{(v_1-u_1)(v_2-u_2)(|v|^2-|u|^2)}{|v-u|^{4}}\bigg],
\\& \p_{v_1}D^{\tau_1-j}=(\tau_1-j)\bigg[-\frac{v_2-u_2}{|v-u|^{2}}+\frac{(v_1-u_1)^2(v_2-u_2)}{|v-u|^{4}}\bigg]D^{\tau_1-j-1}.
\end{align*}
 \fi
\begin{align*}
&\ \p_{v_\parallel}^\iota\bigg[|v-u|^{-1}e^{-\frac{1}{8}|v-u|^{2}}\bigg(\frac{C_2}{2}\bigg)^me^A\bigg(\frac{C_1}{2}\bigg)^pB_1^{j-2p}B_2^{\tau_2-2m-(\tau_1-j)}D^{\tau_1-j}\bigg]
\\&=\mathcal{K}_1+\mathcal{K}_2+\mathcal{K}_3+\mathcal{K}_4+\mathcal{K}_5+\mathcal{K}_6+\mathcal{K}_7+\mathcal{K}_8,
\end{align*}
where
\begin{align*}
&\mathcal{K}_1=-\frac{v_1-u_1}{|v-u|^{3}}e^{-\frac{1}{8}|v-u|^{2}}\bigg(\frac{C_2}{2}\bigg)^me^A\bigg(\frac{C_1}{2}\bigg)^pB_1^{j-2p}B_2^{\tau_2-2m-(\tau_1-j)}D^{\tau_1-j},
\\&\mathcal{K}_2=-\frac{v_1-u_1}{4}|v-u|^{-1}e^{-\frac{1}{8}|v-u|^{2}}\bigg(\frac{C_2}{2}\bigg)^me^A\bigg(\frac{C_1}{2}\bigg)^pB_1^{j-2p}B_2^{\tau_2-2m-(\tau_1-j)}D^{\tau_1-j},
\\&\mathcal{K}_3=m\frac{(v_1-u_1)(v_2-u_2)^2}{|v-u|^{4}}|v-u|^{-1}e^{-\frac{1}{8}|v-u|^{2}}\bigg(\frac{C_2}{2}\bigg)^{m-1}e^A\bigg(\frac{C_1}{2}\bigg)^pB_1^{j-2p}B_2^{\tau_2-2m-(\tau_1-j)}D^{\tau_1-j},
\\&\mathcal{K}_4=\bigg[-\frac{v_1(|v|^2-|u|^2)}{2|v-u|^{2}}+\frac{(v_1-u_1)(|v|^2-|u|^2)^2}{4|v-u|^{4}}\bigg]|v-u|^{-1}e^{-\frac{1}{8}|v-u|^{2}}\bigg(\frac{C_2}{2}\bigg)^me^A\bigg(\frac{C_1}{2}\bigg)^pB_1^{j-2p}\\&\quad\ \times B_2^{\tau_2-2m-(\tau_1-j)}D^{\tau_1-j}
,
\\&\mathcal{K}_5=p\bigg[-\frac{v_1-u_1}{|v-u|^{2}}+\frac{(v_1-u_1)^3}{|v-u|^{4}}\bigg]|v-u|^{-1}e^{-\frac{1}{8}|v-u|^{2}}\bigg(\frac{C_2}{2}\bigg)^me^A\bigg(\frac{C_1}{2}\bigg)^{p-1}B_1^{j-2p}B_2^{\tau_2-2m-(\tau_1-j)}\\&\quad\ \times D^{\tau_1-j},
\\&\mathcal{K}_6=(j-2p)\bigg[-\frac{v_1(v_1-u_1)}{|v-u|^{2}}-\frac{|v|^2-|u|^2}{2|v-u|^{2}}+\frac{(v_1-u_1)^2(|v|^2-|u|^2)}{|v-u|^{4}}\bigg]|v-u|^{-1}e^{-\frac{1}{8}|v-u|^{2}}\\&\quad\ \times \bigg(\frac{C_2}{2}\bigg)^me^A\bigg(\frac{C_1}{2}\bigg)^{p}B_1^{j-2p-1}B_2^{\tau_2-2m-(\tau_1-j)}D^{\tau_1-j},
\\&\mathcal{K}_7=[\tau_2-2m-(\tau_1-j)]\bigg[-\frac{v_1(v_2-u_2)}{|v-u|^{2}}+\frac{(v_1-u_1)(v_2-u_2)(|v|^2-|u|^2)}{|v-u|^{4}}\bigg]|v-u|^{-1}e^{-\frac{1}{8}|v-u|^{2}}\\&\quad\ \times \bigg(\frac{C_2}{2}\bigg)^me^A\bigg(\frac{C_1}{2}\bigg)^{p}B_1^{j-2p}B_2^{\tau_2-2m-(\tau_1-j)-1}D^{\tau_1-j},
\\&\mathcal{K}_8=(\tau_1-j)\bigg[-\frac{v_2-u_2}{|v-u|^{2}}+\frac{(v_1-u_1)^2(v_2-u_2)}{|v-u|^{4}}\bigg]|v-u|^{-1}e^{-\frac{1}{8}|v-u|^{2}}\bigg(\frac{C_2}{2}\bigg)^me^A\bigg(\frac{C_1}{2}\bigg)^{p}B_1^{j-2p}\\&\quad\ \times B_2^{\tau_2-2m-(\tau_1-j)}D^{\tau_1-j-1}.
\end{align*}
Here we remark that $\mathcal{K}_4$, $\mathcal{K}_6$ and $\mathcal{K}_7$ generate extra $v_1$ term in the computation. Note that $|C_1|+|C_2|+|D|\leq 3$. We get 
\begin{align*}
&|\mathcal{K}_1|\leq C|v-u|^{-2}e^{-\frac{1}{8}|v-u|^{2}}e^AB_1^{j-2p}B_2^{\tau_2-2m-(\tau_1-j)},
\\&|\mathcal{K}_2|\leq C e^{-\frac{1}{8}|v-u|^{2}}e^AB_1^{j-2p}B_2^{\tau_2-2m-(\tau_1-j)},
\\&|\mathcal{K}_3|\leq C\frac{m}{2^m}|v-u|^{-2}e^{-\frac{1}{8}|v-u|^{2}}e^AB_1^{j-2p}B_2^{\tau_2-2m-(\tau_1-j)},
\\&|\mathcal{K}_4|\leq C\nu(v) \bigg[\frac{(|v|^2-|u|^2)}{|v-u|^{3}}+\frac{(|v|^2-|u|^2)^2}{|v-u|^{4}}\bigg]e^{-\frac{1}{8}|v-u|^{2}}e^AB_1^{j-2p} B_2^{\tau_2-2m-(\tau_1-j)}
,
\\&|\mathcal{K}_5|\leq C\frac{p}{2^p}|v-u|^{-2}e^{-\frac{1}{8}|v-u|^{2}}e^AB_1^{j-2p}B_2^{\tau_2-2m-(\tau_1-j)},
\\&|\mathcal{K}_6|\leq C\nu(v)\frac{j-2p}{2^{j-2p}}\bigg[\frac{1}{|v-u|^{2}}+\frac{|v|^2-|u|^2}{|v-u|^{3}}\bigg]e^{-\frac{1}{8}|v-u|^{2}}e^A(2B_1)^{j-2p-1}B_2^{\tau_2-2m-(\tau_1-j)},
\\&|\mathcal{K}_7|\leq C\nu(v)\frac{\tau_2-2m-(\tau_1-j)}{2^{\tau_2-2m-(\tau_1-j)}}\bigg[\frac{1}{|v-u|^{2}}+\frac{|v|^2-|u|^2}{|v-u|^{3}}\bigg]e^{-\frac{1}{8}|v-u|^{2}}e^AB_1^{j-2p}(2B_2)^{\tau_2-2m-(\tau_1-j)-1},
\\&|\mathcal{K}_8|\leq C(\tau_1-j)|v-u|^{-2}e^{-\frac{1}{8}|v-u|^{2}}e^AB_1^{j-2p}B_2^{\tau_2-2m-(\tau_1-j)}.
\end{align*}

As a result, combining these estimates, we conclude that
\begin{align*}
&|\mathcal{K}_1|+|\mathcal{K}_2|+|\mathcal{K}_3|+|\mathcal{K}_4|+|\mathcal{K}_5|+|\mathcal{K}_8|\\&\leq C\nu(v)e^{-\frac{1}{8}|v-u|^{2}}e^{A}\bigg[1+(1+\tau_1-j)|v-u|^{-2}+\frac{|v|^2-|u|^2}{|v-u|^{3}}+\frac{(|v|^2-|u|^2)^2}{|v-u|^{4}}\bigg]B_1^{j-2p}B_2^{\tau_2-2m-(\tau_1-j)},
\\&|\mathcal{K}_6|\leq C\nu(v)e^{-\frac{1}{8}|v-u|^{2}}e^{A}\bigg[ |v-u|^{-2}+\frac{|v|^2-|u|^2}{|v-u|^{3}}\bigg](2B_1)^{j-2p-1}B_2^{\tau_2-2m-(\tau_1-j)},
\\&|\mathcal{K}_7|\leq C\nu(v)e^{-\frac{1}{8}|v-u|^{2}}e^{A}\bigg[|v-u|^{-2}+\frac{|v|^2-|u|^2}{|v-u|^{3}}\bigg]B_1^{j-2p}(2B_2)^{\tau_2-2m-(\tau_1-j)-1}.
\end{align*}
Together with $2|B_1|\leq \frac{||v|^2-|u|^2|}{|v-u|}, 2|B_2|\leq \frac{||v|^2-|u|^2|}{|v-u|}$, we obtain
\begin{align*}
&|\mathcal{K}_1|+|\mathcal{K}_2|+|\mathcal{K}_3|+|\mathcal{K}_4|+|\mathcal{K}_5|+|\mathcal{K}_8|\\&\leq C\nu(v)e^{-\frac{1}{8}|v-u|^{2}}e^{A/2}e^{A/6}\bigg[1+(1+\tau_1-j)|v-u|^{-2}+\frac{|v|^2-|u|^2}{|v-u|^{3}}+\frac{(|v|^2-|u|^2)^2}{|v-u|^{4}}\bigg]\\& \times e^{A/6} \bigg(\frac{|v|^2-|u|^2}{|v-u|}\bigg)^{j-2p}e^{A/6}\bigg(\frac{|v|^2-|u|^2}{|v-u|}\bigg)^{\tau_2-2m-(\tau_1-j)},
\\&|\mathcal{K}_6|\leq C\nu(v)e^{-\frac{1}{8}|v-u|^{2}}e^{A/2}e^{A/6}\bigg[ |v-u|^{-2}+\frac{|v|^2-|u|^2}{|v-u|^{3}}\bigg]e^{A/6}\bigg(\frac{|v|^2-|u|^2}{|v-u|}\bigg)^{j-2p-1}\\& \times e^{A/6}\bigg(\frac{|v|^2-|u|^2}{|v-u|}\bigg)^{\tau_2-2m-(\tau_1-j)},
\\&|\mathcal{K}_7|\leq C\nu(v)e^{-\frac{1}{8}|v-u|^{2}}e^{A/2}e^{A/6}\bigg[|v-u|^{-2}+\frac{|v|^2-|u|^2}{|v-u|^{3}}\bigg]e^{A/6}\bigg(\frac{|v|^2-|u|^2}{|v-u|}\bigg)^{j-2p}\\& \times e^{A/6}\bigg(\frac{|v|^2-|u|^2}{|v-u|}\bigg)^{\tau_2-2m-(\tau_1-j)-1}.
\end{align*}
Then we use Lemma \ref{lemma:enut_control} with replacing $\nu(v)t$ and 8 by $(|v|^2+|u|^2)/|v-u|$ and 48 respectively to control polynomial growth term by exponential growth term:
\begin{align*}
&|\mathcal{K}_1|+|\mathcal{K}_2|+|\mathcal{K}_3|+|\mathcal{K}_4|+|\mathcal{K}_5|+|\mathcal{K}_6|+|\mathcal{K}_7|+|\mathcal{K}_8|
\\&\leq C\nu(v)e^{-\frac{1}{8}|v-u|^{2}}e^{A/2}\big[1+(1+\tau_1-j)|v-u|^{-2} \big][(j-2p)/2]!48^{(j-2p)/2}[(\tau_2-2m-\tau_1+j)/2]!\\&\times48^{(\tau_2-2m-\tau_1+j)/2}.
\end{align*}

Thus we have
\begin{align*}
&\bigg|\ \p_{v_\parallel}^\iota\bigg[|v-u|^{-1}e^{-\frac{1}{8}|v-u|^{2}}\bigg(\frac{C_2}{2}\bigg)^me^A\bigg(\frac{C_1}{2}\bigg)^pB_1^{j-2p}B_2^{\tau_2-2m-(\tau_1-j)}D^{\tau_1-j}\bigg] \bigg|
\\& \leq C \nu(v) e^{-\frac{1}{8}|v-u|^{2}}e^{A/2}[(j-2p)/2]!6^{(j-2p)/2}[(\tau_2-2m-\tau_1+j)/2]!6^{(\tau_2-2m-\tau_1+j)/2}
\\& \times\big[1+(1+\tau_1-j)|v-u|^{-2} \big].
\end{align*}
Therefore, we derive
\begin{align*}
 &\bigg|\int_{\mathbb{R}^3}\frac{w(v)}{w(u)} \p_{v_\parallel}^\iota\bigg[|v-u|^{-1}e^{-\frac{1}{8}|v-u|^{2}}\bigg(\frac{C_2}{2}\bigg)^me^A\bigg(\frac{C_1}{2}\bigg)^pB_1^{j-2p}B_2^{\tau_2-2m-(\tau_1-j)}D^{\tau_1-j}\bigg] \dd u \bigg|
\\& \leq C\nu(v)(1+\tau_1-j)\bigg|\int_{\mathbb{R}^3}\frac{w(v)}{w(u)} e^{-\frac{1}{8}|v-u|^{2}}e^{A/2}(1+|v-u|^{-2}\big) \dd u \bigg|\\&\times[(j-2p)/2]!48^{(j-2p)/2}[(\tau_2-2m-\tau_1+j)/2]!48^{(\tau_2-2m-\tau_1+j)/2}.
\end{align*}

We can refer to \cite[Lemma 3]{G} to conclude that there exists a constant $\tilde\varrho>0$ such that 
 \begin{align*}
& \bigg|\int_{\mathbb{R}^3}\frac{w(v)}{w(u)}\big(1+|v-u|^{-2}\big)e^{-\frac{1}{8}|v-u|^{2}}e^{A/2} \dd u \bigg|
\\&\leq \bigg|\int_{\mathbb{R}^3}\big(1+|v-u|^{-2}\big)e^{-\tilde\varrho|v-u|^{2}}\dd u \bigg|\leq C.
\end{align*}
Therefore, we get
\begin{align*}
&\sum_{m=0}^{[\tau_2/2]}\frac{1}{m!}\sum_{j=0}^{\tau_1}\frac{1}{(\tau_1-j)!}\frac{1}{[\tau_2-2m-(\tau_1-j)]!}\sum_{p=0}^{[j/2]}\frac{1}{p!(j-2p)!}\\&\times \bigg|\int_{\mathbb{R}^3}\frac{w(v)}{w(u)} \p_{v_\parallel}^\iota\bigg[|v-u|^{-1}e^{-\frac{1}{8}|v-u|^{2}}\bigg(\frac{C_2}{2}\bigg)^me^A\bigg(\frac{C_1}{2}\bigg)^pB_1^{j-2p}B_2^{\tau_2-2m-(\tau_1-j)}D^{\tau_1-j}\bigg] \dd u \bigg|
\\& \leq C \nu(v) \sum_{m=0}^{[\tau_2/2]}\frac{1}{m!}\sum_{j=0}^{\tau_1}\frac{(1+\tau_1-j)}{(\tau_1-j)!}\sum_{p=0}^{[j/2]}\frac{1}{p!}\frac{[(\tau_2-2m-\tau_1+j)/2]!48^{(\tau_2-2m-\tau_1+j)/2}}{[\tau_2-2m-(\tau_1-j)]!}\\& \times\frac{[(j-2p)/2]!48^{(j-2p)/2}}{(j-2p)!}
 \leq C \nu(v).
\end{align*}
Here in the last inequality, we used \eqref{MN2} and have
\begin{align*}
\frac{[(\tau_2-2m-\tau_1+j)/2]!48^{(\tau_2-2m-\tau_1+j)/2}}{[\tau_2-2m-(\tau_1-j)]!}\frac{[(j-2p)/2]!48^{(j-2p)/2}}{(j-2p)!}\leq C.
\end{align*}
This concludes the proof of Lemma \ref{lemma:derk12}.
\end{proof}

\noindent {\bf Acknowledgment:}\,
The research of RJD was partially supported by the General Research Fund (Project No.~14301822) from RGC of Hong Kong and also by the grant from the National Natural Science Foundation of China (Project No.~12425109).

\medskip
\noindent\textbf{Data Availability Statement:}
Data sharing is not applicable to this article as no datasets were generated or analysed during the current study.

\noindent\textbf{Conflict of Interest:}
The authors declare that they have no conflict of interest.

\bibliographystyle{abbrv}
%\bibliography{citation}

\iffalse

\fi

\end{document}